\newcommand\val[1]{{\lbrack\!\lbrack} {#1}{\rbrack\!\rbrack}}
\newcommand{\commment}[1]{}
\newcommand{\Prop}{\mathsf{Prop}}
\newcommand{\f}{\mathcal{F}}
\newcommand{\Ll}{\mathrm{ML}}
 \newcommand{\pp}{\mathcal{P}}
 \newcommand{\p}{\mathcal{P}}
  \newcommand{\s}{\mathcal{S}}
  \newcommand{\x}{\mathcal{X}}
   \newcommand{\y}{\mathcal{Y}}
\newcommand{\Diamondblack}{\blacklozenge}
\newcommand{\nomj}{\mathbf{j}}
\newcommand{\nomi}{\mathbf{i}}
\newcommand{\cnomm}{\mathbf{m}}
\newcommand{\cnomn}{\mathbf{n}}
\newcommand{\ML}{\ensuremath{\mathrm{ML}}}
\renewcommand{\phi}{\varphi}
\renewcommand{\emptyset}{\varnothing}
\newcommand{\marginnote}[1]{\marginpar{\raggedright\tiny{#1}}}
 \newtheorem{theorem}{Theorem}[section]
 \newtheorem{lemma}[theorem]{Lemma}
 \newtheorem{prop}[theorem]{Proposition}
 \newtheorem{fact}[theorem]{Fact}
 \newtheorem{cor}[theorem]{Corollary}
 \theoremstyle{definition}
 \newtheorem{definition}[theorem]{Definition}
 \newtheorem{example}[theorem]{Example}
 \newtheorem{remark}[theorem]{Remark}
 \theoremstyle{remark}
\begin{document}
 \title{Algebraic modal correspondence: Sahlqvist and beyond
 %\\ {\small \texttt{Filename: \jobname}}
 }
\author[1]{Willem Conradie\thanks{The research of the first author was supported by grant number NRF UID 70554 of the National Research Foundation of South Africa.}}
\author[1,2]{Alessandra Palmigiano \thanks{The research of the second author has been supported by the the NWO Vidi grant 016.138.314, by the NWO Aspasia grant 015.008.054, and by a Delft Technology Fellowship awarded in 2013.}}
\author[3]{Sumit Sourabh}
\affil[1]{University of Johannesburg}
\affil[2]{Delft University of Technology}
\affil[3]{University of Amsterdam}

% \author{Willem Conradie\thanks{The research of the first author was supported by grant number NRF UID 70554 of the National Research Foundation of South Africa.} \and Alessandra Palmigiano \thanks{The research of the second author has been supported by the the NWO Vidi grant 016.138.314, by the NWO Aspasia grant 015.008.054, and by a Delft Technology Fellowship awarded in 2013.} \and Sumit Sourabh }

%
%\date{}
\maketitle

%\marginnote{add affiliations!! and UJ affiliation AP. Mention Vakarelov activity in connecting the algebraic and the model theoretic approaches. mention seminars??? Modal definability solving euqtions in modal algebras and generalizations of Ackermann lemma}

\begin{abstract}
The present paper proposes a new introductory treatment of the very well known Sahlqvist correspondence theory for classical modal logic. The first motivation for the present treatment is {\em pedagogical}: classical Sahlqvist correspondence is presented  in a uniform and modular way, and, unlike the existing textbook accounts,  extends itself to a class of formulas laying outside the Sahlqvist  class proper. The second motivation is {\em methodological}: the present treatment aims at highlighting  the {\em algebraic} and {\em order-theoretic} nature of the correspondence mechanism. The exposition remains elementary and does not presuppose any  previous knowledge or familiarity with the algebraic approach to logic. However, it provides the underlying motivation and basic intuitions for the recent developments in the Sahlqvist theory of nonclassical logics, which compose the so-called unified correspondence theory.  %  a bridge in the gap between the model-theoretic and the algebraic side of modal correspondence theory. We give a new, algebraic proof of the classical Sahlqvist correspondence theorem, as well as a new, algebraic proof of the analogous result for the {\em atomic inductive  formulas}, which form a proper extension of the Sahlqvist class.
\end{abstract}

\tableofcontents

\section*{Introduction}
%\marginnote{The reviewer made the following very important point: `the reliance on algebraic/order-theoretic concepts - and in particular preservation properties of operations - brings correspondence theory much closer to the modern formulation of the theory of canonicity advocated by e.g. [29] and Gehrke et al.  Correspondence and canonicity are thus unified in a deep way by the same algebraic/order-theoretic properties and results. In the case of correspondence this article shows how this common core can be used to identify tame valuations, whilst in the case of canonicity it is used to build canonical (in)equations. This is the feature I find most appealing about this work, although you do not spell this out explicitly.'  In fact we had made this point  in previous versions... I'm very glad he said it so strongly! we should put it back}
Modal logics are perhaps the best known logics after classical propositional and predicate logic. In their modern form they were introduced in the 1930s, as enriched formal languages in which one can express and reason about \emph{modes of truth}, e.g., the {\em possible}, {\em necessary}, {\em usual} or {\em past} truth of propositions. Syntactically, the language of modal logic is an expansion of classical propositional language with new connectives, so as to have formulas such as $\Box \phi$ or $\Diamond\phi$, the intended meaning of which is `$\phi$ is necessary/obligatory/always true in the past$\dots$' and `$\phi$ is possible/permitted/sometimes true in the past$\dots$', respectively. Modal logics are widely applied in fields as diverse as program verification in theoretical computer science \cite{GrVe08}, natural language semantics in formal philosophy \cite{vBM1997}, multi-agent systems in AI \cite{Gabbay:1993}, %\marginpar{\raggedright\tiny{{\em Johan F.A.K. van Benthem on Logical and Informational Dynamics}, Springer series {\em Outstanding Contributions to Logic}, edited by A. Baltag and S. Smets, Springer, in print 2014.}},
foundations of arithmetics \cite{ArtBek05}, game theory in economics \cite{ParPau03}, categorization theory in social science  and management science \cite{PoHa10}.

This success is due to the peculiar but natural way modal logic is interpreted in relational structures (Kripke frames), paired with the ubiquity of these structures in science and philosophy. But the existence of this interpretation on its own would not be enough: correspondence theory is the mathematical tool which makes it possible to exploit the relational interpretation of modal logic by establishing systematic connections with the languages of first- and second-order logic which are naturally interpreted on relational structures.
In particular, Sahlqvist correspondence theory
%
%greatly contributes to the overall success of modal logic.\marginnote{Too much `success'. This transition does not work. } Indeed,\marginnote{Rearrange: Sahlqvist is a specific part of correspondence theory and should come at the end of the paragraph.}
%he best-known results in correspondence theory \cite{Sa75} involve the
%it
provides a syntactic characterization of a class of formulas (the so-called {\em Sahlqvist formulas}) which are equivalent to first-order conditions on  Kripke frames which are effectively computable from the given modal formula. In a sense, Sahlqvist correspondence theory works as a meta-semantic tool which makes it possible to understand the `meaning' of  modal axioms (which, interestingly, in the best known cases arise from non-mathematical, e.g.\ philosophical considerations) in terms of the condition expressed by its first-order frame correspondent. In this way, for instance, $\Box p\rightarrow p$ can be understood as the `reflexivity axiom', $\Diamond\Diamond p\rightarrow \Diamond p$ as the `transitivity axiom', $\Diamond \Box p\rightarrow \Box \Diamond p$ as the `confluence axiom', and so on. %All these results constitute what is commonly referred to as Sahlqvist correspondence theory.

 Sahlqvist theory is currently a very active field of research. This field has significantly broadened its scope in recent years, extending the state of the art and the benefits of Sahlqvist theory from modal logic to a class of logics which includes, among others, intuitionistic and lattice-based (modal) logics \cite{GNV, CoPa10}, substructural logics \cite{Kurtonina,Suz11,CoPa11}, non-normal modal logics \cite{FrPaSa14,PaSoZh14a}, hybrid logics \cite{ConRob}, many-valued logics \cite{manyval},  mu-calculus \cite{BeHovB12,CFPS,CoCr14,BeSo,CCPZ}, and coalgebraic logic \cite{LiPaSaSc12,LiPaSc}. %More references are given in Section \ref{sec:quick}.
The common ground to these results is the recognition that algebraic and order-theoretic notions play a fundamental role in the various incarnations of the Sahlqvist phenomenon. This recognition initially concerned only Sahlqvist {\em canonicity} results \cite{Jo94, GhMe97}. More recently, \cite{CoPa10} started a novel paradigm by showing  that Sahlqvist {\em correspondence} phenomena can be systematically and generally explained in terms of the same algebraic and order-theoretic principles driving algebraic canonicity, and thus correspondence and canonicity can be unified in a deep way by the same algebraic and order-theoretic root. These insights developed into the so-called {\em unified correspondence theory} \cite{CoGhPa13}, a framework aimed at encompassing the many Sahlqvist-type results into one coherent framework.
%is well accounted by looking at it algebraically thanks to the dualities between algebraic semantics and relational semantics of the logics under study,  the exploitation of this fact, and the use of expanded formal languages to obtain a calculus for correspondents.it was shown that

The breadth of this work has stimulated many and varied applications. Some are closely related to the core concerns of the theory itself, such as the understanding of the relationship between different methodologies for obtaining canonicity results \cite{PaSoZh14,CP16}, finite embeddability constructions \cite{MoAl}, or of the phenomenon of pseudocorrespondence \cite{CGPSZ14}.  Other, possibly surprising applications include the dual characterizations of classes of finite lattices \cite{FrPaSa14},  the identification of the syntactic shape of axioms which can be translated into analytic rules of  display calculi \cite{GMPTZ}, and the definition of internal Gentzen calculi for the logics of strict implication \cite{MaZhao15}. Finally, the insights of unified correspondence theory have made it possible to determine the extent to which the Sahlqvist theory of classes of normal DLEs can be reduced to the Sahlqvist theory of normal Boolean expansions, by means of G\"{o}del-type translations \cite{CPZ:Trans}, and to  interpret lattice-based modal logic as the epistemic logic of categorization systems (as they are used in social science and management science) \cite{CFPPTW}.

%A unifying methodology for obtaining Sahlqvist correspondence results has emerged, and has been dubbed `unified correspondence' \cite{CoGhPa13}. This methodology is used to obtain many of the extensions of Sahlqvist theory cited above and is also the essential link to the applications we mentioned.

In the light of these developments, we believe it is useful to retell the basics of Sahlqvist correspondence theory in a way which highlights the algebraic and order-theoretic machinery at their core, while at the same time making this core accessible to an audience of logicians who do not need to be familiar with the specific techniques. This is the aim of the present paper.

%All these results and extensions form the so called {\em unified correspondence theory} \cite{CGP}, which the present special issue charts as one coherent whole. % the state-of-the-art  of which the present special issue aims at witnessing.
A key feature of the unified correspondence approach, which is also fundamental to the present exposition,
%to this  extension
is the recasting of the original  Sahlqvist correspondence problem  into the setting of the {\em complex algebras} of Kripke frames.\footnote{The complex algebra of a Kripke frame
   is just the powerset algebra of set of possible worlds, endowed with additional unary box and diamond operations defined in terms of the accessibility relation of the given Kripke frame.} In this setting,  the effective computation of the first-order condition corresponding to any given Sahlqvist formula can be analyzed from an {\em algebraic } and {\em order-theoretic} viewpoint.
This analysis makes it possible to draw a neat conceptual distinction between the syntactic shape of formulas, the order-theoretic conditions on their algebraic interpretations, and the mechanism by which these order-theoretic conditions guarantee the success of the Sahlqvist reduction strategy. Thanks to this analysis, the syntactic characterization of Sahlqvist formulas can be explained in terms of the order-theoretic properties of their algebraic interpretation. In its turn, this order-theoretic explanation provides the essential guideline for extending the definition of Sahlqvist formulas to the previously mentioned array of logics.

 The connections between the algebraic and the relational semantics of modal logic and other propositional logics form a mathematically rich and deep theory, the foundations of which were laid by Stone \cite{Sto36}, J\'onsson and Tarski \cite{JT52}, and more recently developed by Goldblatt \cite{Gol89}. However, it is worth stressing again that the treatment of the present paper does not require any previous knowledge or familiarity with this theory, nor with the algebraic approach to logic.\footnote{For the sake of keeping the presentation elementary, in the present paper we mainly focus on Sahlqvist correspondence and do not treat Sahlqvist canonicity in depth. On the other hand, the systematic algebraic treatment of Sahlqvist correspondence is much newer  than  the algebraic treatment of Sahlqvist canonicity, which goes back to J\'onsson \cite{Jo94} and Ghilardi and Meloni \cite{GhMe97}. Indeed, the papers \cite{CoPa10,CFPS} are the first instances known to the authors of the explicitly algebraic treatment of \emph{correspondence} for \emph{classes} of formulas---rather than for isolated instances, as in e.g.\ \cite{DuGePa05}.} %(i.e.\ not isolated instances) and which moreover makes explicit use of algebraic and order theoretic properties in an algebraic setting. }

 In particular,  %complex algebras
  %algebraic setting of choice %is in fact tightly linked with and arising from the relational semantics of modal logic. Indeed, we will work in the setting of the {\em complex algebras} of Kripke structures. The complex algebra of a Kripke frame are just the powerset algebras of set of possible worlds of their associated , endowed with additional unary box and diamond operations defined in terms of the accessibility relation of the given Kripke frame. Thus,
  while making it possible to consider and reason about properties with a distinct algebraic and order-theoretic flavour, the environment of complex algebras retains and supports our set-theoretic intuition coming from Kripke frames.

From a pedagogical point of view, the presentation of Sahlqvist correspondence in this environment has proved to be very successful. Indeed, the perspicuity of correspondence arguments couched in terms of complex algebras has already been noted and advocated by Vakarelov, who illustrated it by means of examples in a number of talks and abstracts, of which \cite{Vak2005}  is a good example. In the hands of the second author, this approach has been well received by the participants of various courses, including the 2009 and 2010 installments of the course \emph{Introduction to Modal Logic} at the ILLC (University of Amsterdam), a five-day course at the Istanbul Graduate Summer School in Logic in 2010, and graduate courses and tutorials delivered in 2011 at the Department of Computer Science of the  University of  Milan,  in 2012 at the Institute of Logic and Intelligence of the Southwest University in Chonqing, and in 2016 at the Berkeley-Stanford Circle in Logic and Philosophy.

%In the present paper,   an {\em ab ovo}, self-contained exposition of the classical Sahlvist correspondence theory is given in the original language of basic modal logic treated by Sahlqvist and van Benthem.

The present exposition  is closely related to but also very different from  standard textbook treatments (cf.\ e.g.\ \cite{BdRV01, ChZa97,Kra99}), and, without introducing  technicalities such as the Ackermann lemma,  explains in elementary terms the conceptual foundations of unified correspondence theory. We believe that the present treatment  can be useful in making unified correspondence theory accessible to a community of logicians wider than that of the experts in algebraic methods in logic.

\paragraph{Acknowledgements.} We wish to thank the anonymous reviewer for  his/her very careful reading of the paper and for his/her insightful comments which have greatly improved it.

%\paragraph{Structure of the present paper}
%\marginpar{\raggedright\tiny{A: needed additions: add references and structure of paper}}

\section{Modal correspondence: a quick introduction}
\label{sec:quick}

The present section begins by collecting the  formal preliminaries to correspondence theory for basic modal logic, up to the standard translation. This is followed by a summary of some  important themes and ideas in correspondence theory, including a bird's-eye view of the most important syntactic classes and proof methods in the literature, and their interconnections. The material in this section by no means constitutes an exhaustive survey of the field, but is only intended to give some necessary background and context to the algebraic perspective on Sahlqvist correspondence developed in Section \ref{section:algebraic correspondence}. It is precisely this perspective on correspondence that has been instrumental in shedding light on  the way the proof methods discussed in the summary relate to one another.

\subsection{Modal logic}

\paragraph{Syntax and semantics.} The \textit{basic modal language}, denoted $\mathrm{ML}$, is defined using a set $\Prop$ of propositional variables, also called atomic propositions, and a unary modal operator $\Diamond$ (`diamond'). The well-formed \textit{formulas} of this language are given by the rule
\[
\varphi::= p \;|\; \bot \;|\; \neg \varphi \;|\; \varphi \vee \varphi \;|\; \Diamond \varphi,
\]
where $p \in \Prop$. The connective $\Box$, the dual of $\Diamond$, is defined as $\Box\varphi := \neg\Diamond\neg\varphi$. The boolean connectives $\wedge$, $\rightarrow$, and $\leftrightarrow$ and also the constant $\top$ are defined as usual. We interpret this language on Kripke frames and models: A \emph{Kripke frame} is a  structure $\f = ( W, R )$ with $W$ a non empty set and $R$ a binary relation on $W$. Augmenting $\f$ with a valuation function $V : \Prop \rightarrow \mathcal{P}(W)$ we obtain a \emph{Kripke model} $\mathcal{M} = ( W, R, V )$.

The \emph{complex algebra}\label{BAO} of a frame $\f = ( W, R )$ is the boolean algebra with operator (BAO)
\[
\f^+ = (\pp(W), \cap, \cup, -_{W}, \varnothing, W, m_R)
\]
where $-_W$ denotes set complementation relative to $W$, and $m_R : \pp(W) \rightarrow \pp(W)$ is given by
\begin{equation}\label{def:mr}
m_R(X) := \{ w \in W | Rwv \mbox{ for some } v \in X \}.
\end{equation}
We also let

\begin{equation}\label{def:lr}
l_{R}(X) := \{ w \in W | Rwv \mbox{ for all }  v \in X\},
\end{equation} or equivalently,
$l_{R}(X) := -_W m_R (-_W X)$.

The perspective we develop in Section \ref{section:algebraic correspondence} is based on $(\p(W), \subseteq)$ being a partial order and the operations of $\f^+$ enjoying certain  properties w.r.t.\ this order.

We now define the semantics of $\mathrm{ML}$ on models and frames via the notion of \emph{meaning function}\footnote{Readers with a background in algebraic logic will immediately recognize  meaning functions as the term functions associated with $\ML$-terms and defined on complex algebras.}. This formulation will be convenient later on in Section \ref{subsec:the general reduction strategy}, when we will develop the discussion on the reduction strategies.

\paragraph{Meaning functions.}
For a formula $\varphi\in \ML$ we write $\varphi = \varphi(p_1,\ldots, p_n)$ to indicate that \emph{at most} the atomic propositions $p_1,\ldots p_n$ occur in $\varphi$. Every such $\varphi$ induces an $n$-ary operation on $\pp(W)$,
\[
\val{\varphi}: \pp(W)^n\to \pp(W),
\]
inductively given by:
\begin{center}
\begin{tabular}{r  l}
$\val{\bot}$ & is the constant function $\varnothing$\\
$\val{p}$ &  is the identity map $\mathit{Id}_{\pp(W)}$\\
$\val{\neg\varphi}$ & is the complementation $W\setminus\val{\varphi}$\\
$\val{\varphi\vee \psi}$  & is the union $\val{\varphi}\cup \val{\psi}$\\
$\val{\Diamond\varphi}$  & is the semantic diamond $m_R(\val{\varphi})$.\\
\end{tabular}
\end{center}
It follows that
\begin{center}
\begin{tabular}{r  l}
$\val{\top}$  & is the constant function $W$\\
$\val{\varphi\wedge \psi}$ & is the intersection $\val{\varphi}\cap\val{\psi}$\\
$\val{\Box\varphi}$  & is the semantic box $l_R(\val{\varphi}).$\\
\end{tabular}
\end{center}
For every formula $\varphi$, the $n$-ary operation $\val{\varphi}$ can be also regarded as a map that takes valuations as
arguments and gives subsets of $\pp(W)$ as its output. Indeed, for every $\varphi \in \Ll$, let
\begin{equation}
\label{notation phiV}
\val{\varphi}(V): = \val{\varphi}(V(p_1),\ldots, V(p_n)).
\end{equation}
Then $\val{\varphi}(V)$ is the {\em extension} of $\varphi$ under the valuation $V$, i.e.\ the set of the states of $(\f, V)$ at
which $\varphi$ is true. Since this happens for all valuations, we can think of $\val{\varphi}$ as the {\em meaning} function of $\varphi$.  We can now define the notion of \emph{truth} of a formula $\phi$ at a point $w$ in a model $\mathcal{M} = ( W, R, V )$, denoted $\mathcal{M}, w \Vdash \phi$, by
\[
\mathcal{M}, w \Vdash \phi \quad \mbox{ iff }\quad  w \in \val{\varphi}(V).
\]
Similarly, \emph{validity} at a point in a frame is given by
\[
\f, w\Vdash \varphi\quad \mbox{ iff }\quad  w \in \val{\varphi}(V) \mbox{ for
every valuation } V \mbox{ on } \f.
\]
The \emph{global versions} of truth and validity are obtained by quantifying universally over $w$ in the above clauses. Thus we have $\mathcal{M} \Vdash \phi$ iff $\val{\varphi}(V) = W$, and $\f\Vdash \varphi$ iff $\val{\varphi}(V) = W$ for
every valuation $V$.

\paragraph{General frames, admissible valuations, and canonicity.}\label{DGF} Appealing as they are, Kripke frames are not adequate to provide uniform completeness results for {\em all} modal logics (the first examples of frame-incomplete modal logic were given by Thomason \cite{Th74}. This issue has been further clarified by Blok \cite{Bl78}).

For uniform completeness, Kripke frames need to be equipped with extra structure. A {\em general frame} is a triple $\mathcal{G} = (W, R, \mathcal{A})$, such that $\mathcal{G}^{\sharp} = (W, R)$ is a Kripke frame, and $\mathcal{A}$ is a sub-BAO of the complex algebra $(\mathcal{G}^{\sharp})^+$ (cf.\ page \pageref{BAO}). For $w\in W$, we define $R[w] = \{v \in W : Rwv\}$ and $R^{-1}[w] = \{v \in W : Rvw\}$. Also, for $S \subseteq W$, we let $R[S] = \bigcup \{R[s] \mid s\in S  \}$ and $R^{-1}[S] = \bigcup \{R^{-1}[s] \mid s\in  S \}$. \label{def:rw} For every $k \in \mathbb{N}$ we define $R^{k}[S]$ by induction on $k$ as follows: $R^{0}[S] = S$, and $R^{k+1}[S] = R[R^{k}[S]]$. \label{notation:rk} An {\em admissible valuation} on $\mathcal{G}$ is a map $v: \mathsf{Prop}\to \mathcal{A}$. Satisfaction and validity of modal formulas w.r.t.\ general frames are defined as in the case of Kripke frames, but by restricting to  admissible valuations.

In fact, the desired uniform completeness can be given in terms of the following proper subclass of  general frames: a general frame $\mathcal{G}$ is {\em descriptive} if $\mathcal{A}$ forms a base for a Stone topology\footnote{A topology $\tau$ on a set $W$ is a {\em Stone} topology if $(W, \tau)$ is compact, and every two distinct points can be separated by some clopen set.} on $W$, in which $R[w]$ is a closed set for each $w\in W$.

In the light of the uniform completeness w.r.t.\ descriptive general frames, to prove that a given modal logic is frame-complete, it is sufficient to show  that its axioms are valid on a given descriptive general frame $\mathcal{G}$ if, and only if, they are valid on its underlying Kripke frame $\mathcal{G}^{\sharp}$. Formulas whose validity is preserved in this way are called {\em canonical}.

\paragraph{The standard translation.} When interpreted on models, modal logic is essentially a fragment of first-order logic, into which we can effectively and straightforwardly  translate it using the so called \emph{standard translation}. In order to introduce it, we need some preliminary definitions.

Let $L_{0}$ be the first-order language with $=$ and a binary relation symbol $R$, over a set of denumerably many individual variables ${\sf VAR} =
\{x_{0},x_{1},\ldots \}$. Also, let $L_{1}$ be the extension of $L_{0}$ with a set of unary predicate symbols $P, Q, P_{0},P_{1}\ldots$,
corresponding to the propositional variables $p, q, p_{0}, p_{1}\ldots$ of $\mathsf{Prop}$. The language $L_2$ is the extension of $L_1$ with universal second-order quantification over the unary predicates $P, Q, P_{0},P_{1} \ldots$

Clearly, Kripke frames are  structures for {\em both} $L_0$ and  $L_2$. Moreover, (modal) models $\mathcal{M} = (W, R, V)$ can be regarded as structures for $L_1$, by interpreting the predicate symbols $P$ associated with any given atomic proposition $p\in \textsf{Prop}$ as the subset $V(p)\subseteq W$.

$\mathrm{ML}$-formulas are translated into $ L_{1}$ by means of the following \textit{standard translation} $ST_x$ from \cite{BdRV01}. Given a first-order variable $x$ and a modal formula $\phi$, this translation yields a first-order formula $\mathrm{ST}_{x}(\phi)$ in which $x$ is the only free variable. $\mathrm{ST}_{x}(\phi)$ is given inductively by
\begin{center}
\begin{tabular}{r c l}
$\mathrm{ST}_x(p)$ & = & $Px$,\\
$\mathrm{ST}_x(\bot)$& = &$x \neq x$, \\
$\mathrm{ST}_x(\neg\varphi)$&=&$\neg(ST_x(\varphi))$, \\
$\mathrm{ST}_x(\varphi \vee \psi )$& = &$ST_x(\varphi) \vee ST_x(\psi)$, \\
$\mathrm{ST}_x(\Diamond \varphi )$& = &$\exists y (Rxy \wedge ST_y(\varphi))$, where $y$ is any fresh variable.
\end{tabular}
\end{center}
The \emph{standard second-order translation} of a modal formula $\phi$ is the $L_2$-formula obtained by universal second-order quantification over all predicates corresponding to proposition letters occurring in $\phi$, that is, the formula $\forall P_1 \ldots \forall P_n \mathrm{ST}_x(\phi)$.

As is well known and easy to check, for every model $(\mathcal{F}, V)$ and state $w$ in it, it holds that $(\mathcal{F}, V), w \Vdash \phi$ iff $(\mathcal{F}, V) \models \mathrm{ST}_{x}(\phi)[x:= w]$. Moreover, $\mathcal{F}, w \Vdash \phi$ iff $\mathcal{F} \models \forall P_1 \ldots \forall P_n \mathrm{ST}_{x}(\phi)[x:= w]$. The analogous global versions of these results are, respectively, $(\mathcal{F}, V)\Vdash \phi$ iff $(\mathcal{F}, V) \models \forall x \mathrm{ST}_{x}$ and $\mathcal{F} \Vdash \phi$ iff $\mathcal{F} \models \forall P_1 \ldots \forall P_n \forall x\mathrm{ST}_{x}(\phi)$.

\subsection{Correspondence}

As seen in the previous subsection, the correspondence between modal languages and predicate logic depends on where one
focuses in the multi-layered hierarchy of relational semantics notions. At the bottom of this
hierarchy lies the  model. At this level, the question of correspondence, at least when
approached from the modal side, is trivial: all modal formulas define first-order conditions
on these structures. This can be made more precise: we have the following elegant theorem by van Benthem:

\begin{theorem}[cf.\ \cite{vanBenthem:CrrspndncThr:84}]
Modal logic is exactly the bisimulation invariant fragment of $L_1$.
\end{theorem}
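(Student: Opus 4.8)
The statement is an equivalence, which I would prove in the two directions separately. Throughout, write $\mathrm{ST}_x(\phi)$ for the first-order correspondent of a modal formula $\phi$, and recall that a \emph{bisimulation} between models $\mathcal{M}=(W,R,V)$ and $\mathcal{N}=(W',R',V')$ is a relation $Z\subseteq W\times W'$ that respects the valuation of proposition letters and satisfies the usual forth and back conditions with respect to $R$ and $R'$. The \textbf{easy inclusion} is that every formula of the shape $\mathrm{ST}_x(\phi)$ is bisimulation invariant. I would prove this by a routine induction on the structure of $\phi$, showing that whenever $Z$ is a bisimulation with $wZv$ one has $\mathcal{M},w\Vdash\phi$ iff $\mathcal{N},v\Vdash\phi$: the atomic and Boolean cases are immediate from the definition of $Z$ and the clauses for $\val{\cdot}$, while the case $\phi=\Diamond\psi$ is exactly where the forth and back conditions are used, matching an $R$-successor witnessing $\val{\Diamond\psi}$ on one side with an $R'$-successor on the other.

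For the \textbf{hard inclusion}, assume $\alpha(x)$ is a first-order formula invariant under bisimulation; I must produce a modal $\phi$ with $\alpha\equiv \mathrm{ST}_x(\phi)$. The plan is to consider the set of \emph{modal consequences of $\alpha$},
\[
\mathrm{MOC}(\alpha):=\{\mathrm{ST}_x(\psi)\mid \psi\in\ML,\ \alpha\models \mathrm{ST}_x(\psi)\},
\]
and to establish the semantic entailment $\mathrm{MOC}(\alpha)\models\alpha$. Granting this, compactness yields finitely many $\psi_1,\dots,\psi_n$ with $\{\mathrm{ST}_x(\psi_i)\}_i\models\alpha$; since conversely $\alpha\models \mathrm{ST}_x(\psi_i)$ for each $i$, and $\bigwedge_i\mathrm{ST}_x(\psi_i)=\mathrm{ST}_x(\bigwedge_i\psi_i)$, the formula $\phi:=\bigwedge_i\psi_i$ does the job.

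To prove $\mathrm{MOC}(\alpha)\models\alpha$, take any pointed model $(\mathcal{N},v)$ satisfying every formula in $\mathrm{MOC}(\alpha)$. I would first show that $\{\alpha\}$ together with the full modal theory of $(\mathcal{N},v)$ is finitely satisfiable: if some finite conjunction $\psi$ of modal formulas true at $v$ were incompatible with $\alpha$, then $\alpha\models\mathrm{ST}_x(\neg\psi)$, so $\mathrm{ST}_x(\neg\psi)\in\mathrm{MOC}(\alpha)$ and hence $\mathcal{N},v\Vdash\neg\psi$, a contradiction. By compactness there is then a pointed model $(\mathcal{M},u)\models\alpha$ that is modally equivalent to $(\mathcal{N},v)$. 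Now pass to $\omega$-saturated elementary extensions $(\mathcal{M}^\ast,u)$ and $(\mathcal{N}^\ast,v)$: modal equivalence is preserved, and on $\omega$-saturated models modal equivalence coincides with bisimilarity (the Hennessy--Milner theorem), so the two pointed extensions are bisimilar. Since $\alpha$ is first-order it is preserved under elementary extension, giving $(\mathcal{M}^\ast,u)\models\alpha$; bisimulation invariance transfers this to $(\mathcal{N}^\ast,v)$, and a final appeal to elementarity brings it back down to $(\mathcal{N},v)\models\alpha$, as required.

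The \textbf{main obstacle} is the passage through $\omega$-saturated models and the Hennessy--Milner theorem, which is what genuinely converts mere modal equivalence into a bisimulation; the remainder is compactness bookkeeping and the induction of the easy direction. One must also be careful that the models in play interpret only the finitely many predicate symbols occurring in $\alpha$, so that the relevant saturation and the finitary back-and-forth matching go through cleanly.
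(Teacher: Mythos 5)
Your argument is correct: it is the standard proof of van Benthem's characterization theorem (essentially as in \cite{BdRV01} or \cite{vanBenthem:CrrspndncThr:84}), with the easy induction for one inclusion and, for the converse, the compactness argument on the set of modal consequences together with the detour through $\omega$-saturated elementary extensions where modal equivalence becomes bisimilarity. The paper itself offers no proof of this theorem --- it only quotes the result with a citation --- so there is nothing to compare against; the only quibble is that your closing remark about restricting to the finitely many predicates of $\alpha$ is unnecessary, since $\omega$-saturation and the back-and-forth matching go through in the full language.
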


At the top of the hierarchy, the interpretation of modal languages over relational structures via the notion of validity
turns them into fragments of monadic second-order logic, and rather expressive fragments at
that. Indeed, as Thomason \cite{Tho75} has shown, second-order consequence may be effectively
reduced to the modal consequence over relational structures.

On the other hand, as already indicated, some modal formulas actually define first-order conditions on {\em Kripke frames}. For instance, in the standard second-order translation of any formula $\phi$ which contains no propositional variables (called a \emph{constant formula}), the second-order quantifier prefix is empty. Hence, to mention a concrete example, the standard second-order translation  $ \mathrm{ST}_{x}(\Box \bot)$ is $\forall y (Rxy \rightarrow y \neq y) \equiv \forall y (\neg Rxy)$.

We refer to $\Box \bot$ and $\forall y (\neg Rxy)$ as \emph{local frame correspondents}, since  for all Kripke frames $\f$ and states $w$,
$$\f, w \Vdash \Box \bot\quad \mbox{ iff }\quad \f \models \forall y (\neg Rxy) [x:= w].$$ A modal formula $\phi$ and a first-order sentence $\alpha$ are \emph{global frame correspondents} if $\f \Vdash \phi$ iff $\f \models \alpha$ for all Kripke frames $\f$.

But formulas need not be constant to define first-order conditions: indeed, $p \rightarrow \Diamond p$ and $Rxx$ are local frame correspondents. A short proof of this fact might be instructive: %\marginpar{\raggedright\tiny{A: here we need to point out at what exactly is instructive in this proof, otherwise it remains a bit aimless}}
Let $\f = (W,R)$ and $w \in W$. Suppose $Rww$, and let $V$ be any valuation such that $(\f, V), w \Vdash p$. Then, since $Rww$, the state $w$ has a successor satisfying $p$, and hence $(\f, V), w \Vdash \Diamond p$. Since $V$ was arbitrary, we conclude that $\f, w \Vdash p \rightarrow \Diamond p$. Conversely, suppose $\neg Rww$, and let $V$ be {\em some} valuation such that $V(p) = \{ w \}$. Then $(\f, V), w \Vdash p$ but $(\f, V),w \not \Vdash \Diamond p$, hence $\f, w \not \Vdash p \rightarrow \Diamond p$.

\noindent The latter direction is an instance of the so called {\em minimal valuation argument},  which  pivots on the fact that some ``first-order definable" minimal element exists in the class of valuations which make the antecedent of  the formula true at $w$. We will take full stock of this observation in Section \ref{subsec:the general reduction strategy}. % \marginpar{\raggedright\tiny{A: shall we add more, or is this enough to solve the issue raised above?}}

According to the picture emerging from the facts collected so far, %\marginpar{\raggedright\tiny{A: I modified this because the link between this conclusion (`thus') and what came before was not so obvious}}
it is  the correspondence of modal logic and first-order logic on frames  that is open and most interesting. It is here where our efforts are needed in order to try and rescue as much of modal logic as we can from the computational disadvantages of second-order logic. Indeed, there is much that can be salvaged.

In \cite{vanBenthem:CrrspndncThr:84}, van Benthem provides an elegant model-theoretic characterization of the modal formulas which have global first-order correspondents. The constructions involved in this characterization (viz.\ ultrapowers) are infinitary. %\marginpar{\raggedright\tiny{A: shall we say a bit more about what is lacking in van Benthem's result? constructivity? Will we come back on it somewhere?}} %If possible, we would prefer
So it would be useful to couple this result with a theorem providing an effective way to check whether a formula is elementary. This would, of course, be
much too good to be true, and indeed, our skepticism is confirmed by Chagrova's theorem:
\begin{theorem}[cf.\ \cite{Chagrov:Chagrova:2006}]\label{Chagrova:Thm}
It is algorithmically undecidable whether a given modal formula is elementary\footnote{A modal formula is \emph{elementary} if it is first-order definable.}.
\end{theorem}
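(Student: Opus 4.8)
The plan is to establish undecidability by a reduction from a problem already known to be algorithmically unsolvable, transferring its undecidability to the question of elementarity. Concretely, I would fix a class of abstract machines with an undecidable halting problem (Turing machines, or the more frame-friendly two-register Minsky machines) and describe a computable map $M \mapsto \phi_M$ sending each machine--input pair to a modal formula, with the property that $\phi_M$ is elementary if and only if $M$ halts (or, depending on the convenient polarity of the construction, precisely when $M$ fails to halt). Since halting is undecidable, no algorithm can decide elementarity of the formulas in the range of this map, and a fortiori none can decide elementarity of arbitrary $\ML$-formulas.

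The heart of the argument is the construction of the reduction, i.e.\ the encoding of computations into frame conditions. Here I would exploit exactly the phenomenon recorded earlier in this section, namely Thomason's reduction of second-order consequence to modal consequence over relational structures: modal validity on frames is expressive enough to force the underlying Kripke frame to carry a grid- or $\omega$-like structure on which successive machine configurations can be laid out step by step. The formula $\phi_M$ would be engineered so that the frames validating it are exactly (codes of) legal runs of $M$, and so that the global shape of the admissible frames reflects whether the computation terminates: a halting run is finite and can be pinned down by a single first-order sentence counting a bounded number of configurations, whereas a non-halting run demands an unbounded, non-compact configuration. To make the negative case genuinely non-elementary I would graft onto $\phi_M$ a well-known non-first-order ``gadget''---for instance a L\"ob-style subformula expressing (converse) well-foundedness---activated precisely along non-terminating computations.

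Correctness then splits into an easy and a hard direction. In the terminating case I would simply exhibit the explicit first-order frame correspondent of $\phi_M$, obtained by translating the bounded computation into a finite conjunction of first-order clauses. The non-terminating case is where the real work lies: to show that the frame class defined by $\phi_M$ is \emph{not} first-order definable I would invoke the model-theoretic closure properties of elementary classes. By {\L}o\'{s}'s theorem a first-order definable class of frames, together with its complement, is closed under ultraproducts; so it suffices to produce a sequence of frames in the class (or in its complement) whose ultraproduct escapes it. A non-halting computation furnishes exactly such a sequence---approximating the infinite run by ever-longer finite fragments whose ultraproduct contains the forbidden infinite ascending chain---thereby violating ultraproduct closure and defeating elementarity. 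This is the same ultrapower-based obstruction underlying van Benthem's characterization cited above.

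I expect the main obstacle to be the simultaneous balancing act in the reduction: the encoding must be \emph{faithful} (frames validating $\phi_M$ really are runs of $M$, with no spurious models that break the intended correspondence) and it must make the elementary/non-elementary dichotomy \emph{exactly} coincide with halting, so that both the explicit first-order sentence in the positive case and the ultraproduct spoiler in the negative case go through without interference. Controlling stray frames and ensuring the non-elementarity gadget fires on precisely the non-terminating runs is the delicate, construction-heavy step; the computability of $M \mapsto \phi_M$ and the reduction from halting are then routine.
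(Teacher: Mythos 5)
The paper does not actually prove this theorem: it is quoted from the literature with a pointer to Chagrov and Chagrova's survey, so there is no in-paper argument to compare yours against. Judged on its own terms, your proposal identifies the right genre of argument---a computable reduction from an undecidable halting-type problem, with a non-first-order ``gadget'' activated on one side of the dichotomy and an ultraproduct/compactness obstruction certifying non-elementarity---and this is indeed the shape of the known proofs. But as written it is a plan rather than a proof, and the step you yourself flag as ``construction-heavy'' is where the entire content of the theorem lives: the formula $\phi_M$ is never defined, and neither direction of the claimed equivalence is verified.

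Moreover, one ingredient of your plan is unachievable as stated. You want the frames validating $\phi_M$ to be \emph{exactly} the codes of legal runs of $M$. Modally definable frame classes are closed under disjoint unions, generated subframes and bounded morphic images (this is half of the Goldblatt--Thomason theorem), so no modal formula can carve out a class of frames that is, up to isomorphism, a set of computation codes; any such class is immediately polluted by disjoint unions of runs, by p-morphic collapses of grids, etc. The actual constructions must therefore encode computations only up to these closure operations, and arranging that the elementary/non-elementary dichotomy still tracks halting \emph{through} this unavoidable slack is precisely the delicate part. Relatedly, the ``easy'' halting direction is not automatic either: you must exhibit a first-order correspondent for a specific, rather baroque formula, which requires either a Sahlqvist-type argument or a bespoke minimal-valuation computation, not merely ``translating the bounded computation into a finite conjunction of clauses.'' Until the reduction is written down and both directions are checked against these obstacles, the proposal does not yet constitute a proof.
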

An effective characterization is therefore impossible, but if we are willing to be satisfied with approximations, all is not lost.
Various large and interesting, syntactically defined classes of (locally) elementary formulas are known.

\subsection{Syntactic classes}
A large part of the study of  correspondence between modal and first-order logic has traditionally consisted of the identification of syntactically specified classes of modal formulas which have local frame correspondents.

\begin{description}
\item[Formulas without nesting.] These are the modal formulas in which no nesting of modal operators occur. Their elementarity was proved by van Benthem \cite{vB83}.

\item[Sahlqvist formulas.] This is the archetypal class of elementary modal formulas, due to Sahlqvist \cite{Sah75}. The definition will be given in full in Section \ref{sec:sahlqvist and inductive}. Over the years, many extensions, variations and analogues of this result have appeared, including alternative proofs (e.g.\ \cite{SaVa89}), generalizations to arbitrary modal signatures  (e.g.\ \cite{DeRijke:Venema:95}), variations of the correspondence language (e.g.\ \cite{Ohlbach:Schmidt:97} and \cite{vanBenthem:Fix:Points:2006}), Sahlqvist results for hybrid logics (e.g.\ \cite{TenCate:Marx:Viana}).

    Apart from being elementary, the Sahlqvist formulas have the added virtue of being {\em canonical} (i.e.\ of being valid in the canonical, or Henkin, models of the logics axiomatized by them), and hence, of axiomatizing complete normal modal logics. In other words, logics axiomatized using Sahlqvist formulas are sound and strongly complete with respect to elementary classes of Kripke frames.

\item[Inductive formulas.] The inductive formulas are a generalization of the Sahlqvist class, introduced by Goranko and Vakarelov  \cite{Goranko:Vakarelov:2006}. A certain subclass of inductive formulas will be discussed in Section \ref{sec:sahlqvist and inductive}, together with other syntactic classes.

\item[Modal reduction principles.] A \emph{modal reduction principle} is an $\mathrm{ML}$-formula of the form $Q_1 Q_2 \ldots
    Q_n p \rightarrow Q_{n+1} Q_{n+2} \ldots Q_{n+m} p$ where $0 \leq n, m$ and $Q_{i} \in \{\Box, \Diamond\}$ for $1 \leq i \leq n+m$. Many well known modal axioms take this form, e.g.\ $\Box p \rightarrow p$ (reflexivity), $\Box p \rightarrow \Box \Box p$ (transitivity), $p \rightarrow \Box \Diamond p$ (symmetry), $\Diamond \Box p \rightarrow \Box \Diamond p$ (the Geach axiom), and  $\Box \Diamond p \rightarrow \Diamond \Box p$ (the McKinsey axiom). In \cite{vanBenthem:Reduction:Principles}, van Benthem provides a complete classification of the modal reduction principles that define first-order properties on frames. For example, $\Diamond \Box p \rightarrow \Box \Diamond p$ defines a first-order property and $\Box \Diamond p \rightarrow \Diamond \Box p$ does not.

    In \cite{vanBenthem:Reduction:Principles}, it is also shown that, when interpreted over transitive frames, \emph{all} modal reduction principles define first-order properties.

\item[Complex formulas.] This class was introduced by  Vakarelov  \cite{Vakarelov:AiML:2003}. Complex formulas can be seen as substitution instances of  Sahlqvist formulas obtained through the substitution of certain elementary disjunctions for propositional variables. The resulting formulas may violate the Sahlqvist definition. %\marginpar{\raggedright\tiny{A: are these results also in that paper?}}

\end{description}

\subsection{Algorithmic strategies}%\marginnote{If there was one section in the entire article which I would like to see expanded, it would be section 1.4. An intuition of how the algorithms work, if and how they are related to the present article, etc would be very interesting. I would have liked the paper to act a bit more as a review paper too (which I hinted at in my review but maybe I should have been more explicit): the paper covers a lot of material on modal correspondence, the list of references is quite extensive, the authors are very knowledgeable and I think with a bit more work it could perform this function too,}

The standard proof of the elementarity of the Sahlqvist formulas takes the form of an algorithm, known as the Sahlqvist-van Benthem algorithm, which computes first-order correspondents for the members of this class. However, the syntactic definition of the Sahlqvist formulas is taken as primary. Approaches which take the algorithm as primary  have gained momentum in recent years.

One approach is to apply algorithms for second-order quantifier elimination to the second-order translations of modal formulas. Two notable examples are the algorithms  SCAN \cite{GabbayOlbach} and DLS \cite{Szalas1}. SCAN is a resolution based algorithm and can successfully compute the first-order frame correspondent of every Sahlqvist formula \cite{Goranko:SCAN:Complete}. The same is true of DLS \cite{Conradie:DLS}, which, in contrast, is based on Ackermann's lemma.

The algorithm SQEMA \cite{Conradie:et:al:SQEMAI} was designed specifically for modal logic. It computes first-order frame correspondents for all inductive (and hence Sahlqvist) formulas, among others. SQEMA is based on a series of syntactic manipulations of modal formulas which are aimed at achieving a system of implications in which propositional variables can be `solved for' and then eliminated via suitable substitutions justified by Ackermann's lemma.  SQEMA guarantees the canonicity of all formulas on which it succeeds (canonicity will be discussed in more detail in the next subsection), and has been extended and developed in a series of papers by the same authors \cite{SQEMA2,SQEMA3,SQEMA4,CoGoVa10}. The algorithm ALBA \cite{CoPa10}, which first appeared as the analogue of SQEMA in the setting of distributive modal logic (DML) \cite{GNV}, has been generalized so as to account for logical settings in various signatures (e.g.\ non-distributive logics \cite{CoPa11}, non-normal (modal) logics \cite{PaSoZh14a}, mu-calculus on  bi-intuitionistic propositional base \cite{CFPS}, hybrid logics \cite{ConRob}, many-valued logics \cite{manyval}), and has also been applied to environments in which correspondence might not be defined (e.g.\ constructive canonicity \cite{CP16,CCPZ}), to different issues than correspondence proper (e.g.\ pseudocorrespondence \cite{CGPSZ14}), even shedding light on core issues in structural proof theory (the  computation of analytic rules from axioms \cite{GMPTZ, MaZhao15}). These latter applications have significantly expanded the conceptual significance of ALBA beyond its original purpose as a `calculus for correspondence'. In Section \ref{sec:glimpse}, we will give an informal outline  of how ALBA works in connection with the insights presented in Section \ref{section:algebraic correspondence}.

\subsection{Algebraic approaches to Sahlqvist canonicity}

As mentioned earlier, Sahlqvist formulas have the added benefit of being canonical (cf.\ page \pageref{DGF}), and hence complete w.r.t.\  the elementary class of frames defined by their correspondents. In the literature, there exist three prominent algebraic approaches to Sahlqvist theory, which are motivated by the study of modal canonicity. They  can be respectively traced back to Sambin and Vaccaro \cite{SaVa89},  J\'onsson \cite{Jo94} and Ghilardi and Meloni \cite{GhMe97}.  %\cite{JT52}.
Recall that proving that a given modal formula is canonical involves proving that its validity transfers from any descriptive general frame to its underlying Kripke frame.

Sambin and Vaccaro \cite{SaVa89}  gave a simplified proof of the Sahlqvist canonicity theorem using order-topological methods.  Their strategy, which is sometimes referred to as {\em canonicity-via-correspondence}, makes use of the existence of a first-order correspondent for any given Sahlqvist formula {\em in the first-order language of the underlying frame}.

\begin{center}
\begin{tabular}{l c l}\label{table:U:shape}
$\mathcal{G}\Vdash\phi$ & &$\mathcal{G}^{\sharp}\Vdash\phi$\\
 &&\\
$\ \ \ \Updownarrow$ & & $\ \ \ \Updownarrow$\\
&&\\
%$\mathbb{F}\Vdash_{\mathbb{G}}\phi$ & & $\ \ \ \Updownarrow $\\
%$\ \ \ \Updownarrow$ & & \\
$\mathcal{G}\vDash$FO$(\phi)$

&\ \ \ $\Leftrightarrow$ \ \ \ &$\mathcal{G}^{\sharp}\vDash$FO$(\phi)$.\\

\end{tabular}
\end{center}
The crucial observation is that the truth of a first-order sentence in a Kripke frame, seen as a relational model, is independent of  the assignments being admissible or arbitrary, as illustrated by the horizontal bi-implication in the diagram above. From this, and the correspondence (represented by the two vertical bi-implications in the diagram) the canonicity result follows.

Goranko and Vakarelov \cite{Goranko:Vakarelov:2006}  give a proof of canonicity for inductive formulas using a similar strategy to that of Sambin and Vaccaro.

The second algebraic approach to Sahlqvist theory heavily relies on the theory of canonical extensions \cite{JT52}, and focuses on canonicity independently of correspondence. In his seminal work, J\'{o}nsson \cite{Jo94} proved the canonicity of Sahlqvist identities using the fact that the operations interpreting the logical connectives can be extended from a given BAO to its canonical extension, and then using the order-theoretic properties of these extensions and of their resulting compositions. Independently,  Ghilardi and Meloni \cite{GhMe97} gave a constructive proof of canonicity in a setting of  bi-intuitionistic (tense) modal logic using filter and ideal completions. Suzuki \cite{Suz11} extends Ghilardi and Meloni's method and proves canonicity for Sahlqvist formulas in a setting of substructural modal logics. %Also, in the theory of canonical extensions in \cite{DuGePa05},  a Kripke-style relational semantics has been defined uniformly theory of relational semantics for modal logics --- including Sahlqvist canonicity --- for substructural logics and their fragments; these are logics, such as some substructural logics and their fragments, whose associated algebras are not even based on lattices, but on posets that are not distributive in general.

Following \cite{Jo94}, Gehrke, Nagahashi and Venema \cite{GNV} proved the canonicity of Sahlqvist inequalities in the language of distributive modal logic (DML).\footnote{See also the emendations discussed in \cite{PaSoZh14} to the proof of \cite[Theorem 5.6]{GNV}.} In \cite{CoPa10},   duality theory is used to unify the algorithmic approach of \cite{Conradie:et:al:SQEMAI} and the algebraic techniques of \cite{Jo94,GNV}; in particular, the DML-counterparts of  inductive formulas are defined, and their canonicity is proved `via correspondence'. Perhaps more importantly, the methodology of \cite{CoPa10} shows that  the mechanism of correspondence can be explained in terms of  those same algebraic and order-theoretic properties which have been recognized as the main driver of canonicity results, thus conceptually unifying the two sides of Sahlqvist theory.

Recent papers \cite{PaSoZh14,PaSoZh14a} extend the techniques in \cite{Jo94,GNV} to the inductive inequalities of distributive modal logic and to the Sahlqvist inequalities of a very general setting  of regular (i.e.\ possibly non-normal)  modal logics on a weaker than classical propositional base. The techniques in \cite{Jo94,GNV} do not straightforwardly generalize to inductive inequalities. This hurdle was overcome  in \cite{PaSoZh14}, where ALBA is used for canonicity in a novel way which, interestingly, does not involve correspondence. % Besides bringing together the two algebraic approaches to Sahlqvist theory, this work promises new insights into rather advanced open questions, concerning e.g.\ canonicity via pseudo-correspondence \cite{YV98}, or canonicity in the presence of additional axioms.
Even more recent work  \cite{CP16,CCPZ} generalizes the ALBA methodology for canonicity to a constructive setting algebraically captured by general lattice expansions, unifying the  canonicity techniques pioneered by Ghilardi-Meloni and Sambin-Vaccaro, and also covering  fixed-point logics.
% An important ingredient of their proof is the Esakia  Lemma, using which it is possible to express the valuation of a formula on a closed set, as an intersection of valuations on clopen (both closed and open) sets.

\subsection{Sahlqvist via translation}
The last approach we mention aims at obtaining Sahlqvist-type correspondence and canonicity for logics with weaker propositional bases than the classical---e.g.\ intuitionistic (modal) logic, distributive modal logic---as a byproduct of Sahlqvist correspondence for classical modal logic, via translations such as the G\"odel-Tarski.  In \cite{WZ97,WZ98}, this idea has been pursued to achieve results  closely related to correspondence, and in \cite{GNV}, this was the route through which the correspondence result for Sahlqvist DML-inequalities was proven. More recently, \cite{vBBHol} develops a full and faithful translation of the basic normal modal logic into a bimodal logic which is semantically motivated by the interplay between Kripke semantics and possibility semantics. This translation makes it possible to transfer results of classical Sahlqvist correspondence to the possibility semantics setting. In \cite{CPZ:Trans},  the feasibility of the route `via translation'  for obtaining Sahlqvist correspondence and canonicity results is systematically investigated in various  settings of normal expansions of algebras, from normal bi-Heyting algebra expansions to normal distributive lattice expansions  (DLEs). The starting point of this analysis is an order-theoretic reformulation
of the main semantic property of the G\"odel-Tarski translation as a certain adjunction situation. The main conclusions of this analysis are
twofold. On the positive side, it is possible to prove, via translation, correspondence results for inductive inequalities in arbitrary signatures of normal of normal distributive lattice expansions.
On the negative side, canonicity results can be proven via translation only only in the special setting of normal bi-Heyting algebra expansions. Therefore, the existing unified correspondence techniques remain the most economical route to these results.

\commment{
\paragraph{Methodology: reduction strategies}
Both in the model-theoretic and in the algebraic lines of investigation, the stress has shifted
somewhat from the quest for new syntactic characterizations and more onto the methodology. Indeed, in the model-theoretic
approach, the algorithms provide an effective tool for concretely
occurring formulas that can be fed to the algorithm one by one, and
the algebraic approach is not linked to any given signature in
particular.

Let us now introduce the methodology. The starting point
is of course the same: in both approaches, computing the first-order
correspondent of a modal formula $\varphi$ amounts to devising some {\em reduction
strategies} that succeed in eliminating the universal monadic
second-order quantification from the clause that expresses the
frame validity of $\varphi$.

From the methodological point of view, the main contribution of the
algebraic account of Sahlqvist correspondence theory is that this elimination
process can be neatly divided into three stages, that can be then
treated independently of one another:

\texttt{rewrite better}

\begin{description}
\item[Stage 1] establishing the equivalence between the universal
quantification over arbitrary valuations (in the clause that
expresses frame validity) and universal quantification over a
restricted class of carefully designed valuations; this equivalence
will of course not hold in general, but will hold under certain
purely order-theoretic conditions on the algebraic operations;

\item[Stage 2] showing that the universal quantification on the designed valuations
effectively translates to first-order quantification in the
associated frame language;

\item[Stage 3] formulating syntactic conditions on the formulas of a given language
that will guarantee both the order-theoretic behaviour in stage 1
and the effective translation in stage~2.

\end{description}

}

\section{Sahlqvist formulas and atomic inductive  formulas}
\label{sec:sahlqvist and inductive}
The aim of the present section is to define the syntactic classes that we are going to treat in Section \ref{section:algebraic correspondence}.
For the reader's convenience, %we here recall the definition of the Sahlqvist formulas, originally introduced in \cite{Sah75},
we will do this in a hierarchical form  which is conceptually akin to the treatment in \cite[Section 3.6]{BdRV01}, although it is slightly different from it, to better fit our account in the following section. %and restricted to the basic modal language for ease and brevity of presentation.
 In particular, we also introduce the class of {\em atomic inductive formulas}, which properly extends the Sahlqvist class. %and is a restriction of the definition of the inductive formulas \cite{Goranko:Vakarelov:2006}. %, and their restriction to the linear inductive formulas.
To further set the stage for the treatment in Section \ref{section:algebraic correspondence}, we will also show that the formulas in each of these classes can be equivalently rewritten in certain normal forms. We confine our presentation to the basic modal language.

\subsection{Sahlqvist implications and atomic inductive implications}
\label{ssec:sahlqvist and linear ind}

\paragraph{Closed and uniform formulas.} The \textit{closed} modal formulas are those that contain no proposition letter.
An occurrence of a proposition letter $p$ in a formula $\phi$ is a {\em positive} ({\em negative}) if it is under the scope of an even (odd) number of negation signs. (To apply this definition correctly one of course has to bear in mind the negation signs introduced by the defined connectives $\rightarrow$ and $\leftrightarrow$.) A formula $\varphi$ is \textit{positive} in $p$ (\textit{negative} in $p$) if all occurrences of $p$ in $\varphi $ are positive (negative).

A proposition letter occurs \emph{uniformly} in a formula if it occurs only positively or only negatively. A modal formula is \emph{uniform} if all the propositional letters it contains occur uniformly. Let $\mathsf{UF}$ be the class of uniform formulas.

\paragraph {Very simple Sahlqvist implications.} %A \emph{definite very simple Sahlqvist antecedent} is a formula built up from $\top$, $\bot$, negative formulas and propositional letters, using only $\wedge$ and $\Diamond$.
A \emph{very simple Sahlqvist antecedent} is a formula built up from $\top$, $\bot$, negative formulas and proposition letters, using $\vee$, $\wedge$ and $\Diamond$. A \textit{ %(definite)
very simple Sahlqvist implication} is an implication $\varphi \rightarrow \psi$ in which $\psi$ is positive and $\phi$ is a %(definite)
very simple Sahlqvist antecedent.
Let $\mathsf{VSSI}$ be the class of very simple Sahlqvist implications.

\paragraph{Sahlqvist implications.} A \emph{boxed atom} is a propositional variable preceded by a (possibly empty) string of boxes, i.e.\ a formula of the form $\Box^{n} p$ where $n \in \mathbb{N}$ and $p \in \mathsf{Prop}$. %A \emph{ definite Sahlqvist antecedent} is a formula built up from $\top$, $\bot$, negative formulas and boxed atoms, using only $\wedge$ and $\Diamond$.
A \emph{Sahlqvist antecedent} is a formula built up from $\top$, $\bot$, negative formulas and boxed atoms, using $\vee$, $\wedge$ and $\Diamond$. A \emph{%(definite)
Sahlqvist implication} is an implication $\phi \rightarrow \psi$ in which $\psi$ is positive and $\phi$ is a %(definite)
Sahlqvist antecedent. Let $\mathsf{SI}$ be the class of  Sahlqvist implications.

\paragraph{Atomic inductive implications.} The following definition is a special case of \cite[Definition 27]{Goranko:Vakarelov:2006}. Let $\sharp$ be a symbol not belonging to $\mathrm{ML}$. An \emph{atomic box-form of $\sharp$} in $\mathrm{ML}$ is defined
recursively as follows:
\begin{enumerate}
\item for every $k\in \mathbb{N}$, $\Box^k\sharp$ is an atomic box-form of $\sharp$;
\item If $B(\sharp)$ is an atomic box-form of $\sharp$, then for any proposition letter $p$, $\Box(p \rightarrow B(\sharp))$ is an atomic box-form of $\sharp$.
\end{enumerate}
Thus, atomic box-forms of $\sharp$ are of the type
\[
\Box(p_{0}\rightarrow \Box(p_{1}\rightarrow \ldots \Box(p_{n}\rightarrow \Box^k\sharp)\ldots)),
\]
where the $p$s are are not necessarily different.

By substituting any propositional variable $p\in \mathsf{Prop}$ for $\sharp$ in an atomic
box-form $B(\sharp)$ we obtain an \emph{atomic box-formula of $p$}, namely
$B(p)$. The occurrence of the variable $p$ substituted for $\sharp$ is called the \emph{head}
of $B(p)$ and every other occurrence of a variable in $B(p)$ (including every occurrence of $p$ which is not the head) is
called \emph{inessential}.
%A \emph{definite atomic regular antecedent} is a formula built up from $\top$, $\bot$, negative formulas, and atomic box-formulas, using only $\wedge$ and $\Diamond$.
An \emph{atomic regular antecedent} is a formula built up from $\top$, $\bot$, negative formulas, and atomic box-formulas, using $\vee$, $\wedge$ and $\Diamond$.

\noindent The \emph{dependency digraph} of a set of box-formulas
$\mathcal{B}=\{B_{1}(p_{1}),\ldots , B_{n}(p_{n})\}$ is the directed
graph $G_{\mathcal{B}}=\left\langle V,E\right\rangle $ where $V
=\{p_{1},\ldots ,p_{n}\}$ is the set of heads of members of $\mathcal{B}$,
and $E$ is a binary relation on $V$ such that $p_{i} E p_{j}$ iff
$p_{i}$ occurs as an inessential variable in a box formula in $\mathcal{B}$ with
head $p_{j}$. A digraph is \emph{acyclic} if it contains no directed cycles or loops.
Note that the transitive closure of the edge relation $E$ of an acyclic digraph is a strict partial order, i.e.\ it is irreflexive and transitive, and consequently also antisymmetric. The dependency digraph of a formula $\phi$ is the dependency digraph of the set of
box-formulas that occur as subformulas of $\phi$.

An \emph{%(definite)
atomic inductive antecedent} is an %(definite)
atomic regular antecedent with an acyclic dependency digraph.
An \emph{
%(definite)
atomic regular (resp.\ inductive) implication} is an implication $\phi \rightarrow \psi$ in which $\psi$ is positive and $\phi$ is an %(definite)
atomic regular (resp.\ inductive) antecedent. Let $\mathsf{AII}$ be the class of atomic inductive implications.

\begin{example}
Consider the following formulas:
\begin{eqnarray*}
\phi_1 &:= &p\wedge \square (p \rightarrow \square q) \rightarrow \Diamond \square \square q,\\
\phi_2 &:= & \Diamond \Box p \wedge \Diamond ( \Box(p \rightarrow q) \vee  \Box (p \rightarrow \Box \Box r)) \rightarrow  \Diamond \Box (q \vee \Diamond r)\\
\phi_3 &:= & \Diamond(\Box( p \rightarrow \Box \Box q) \lor \Box(q \rightarrow \Box p)) \rightarrow \Diamond \Box p.
\end{eqnarray*}

$\phi_1$ is an atomic inductive implication which is not a Sahlqvist implication. The antecedent is the conjunction of the atomic box-formulas $p$ and $\square (p \rightarrow \square q)$. The dependency digraph over the set of heads $\{p,q\}$ has only one edge, from $p$ to $q$, and thus linearly orders the variables.

$\phi_2$ is an atomic inductive implication. Its dependency digraph has three vertices $p$, $q$, and $r$, and arcs from $p$ to $q$ and from $p$ to $r$.

$\phi_3$ is an atomic regular but not inductive implication. Its dependency digraph contains a 2-cycle on vertices $p$ and $q$.
\end{example}
\begin{remark}
As mentioned earlier, the development of the present paper extends beyond the usual textbook treatments of Sahlqvist theory by accounting for atomic inductive formulas. However, it does not cover exhaustively  the state-of-the-art in Sahlqvist theory, which is the class of so-called {\em inductive formulas},  defined analogously to atomic inductive formulas by allowing {\em arbitrary nesting of boxes} and {\em arbitrary positive terms} instead of individual variables in the antecedent of implications  in box-formulas, and then requiring that the corresponding dependency diagraph be acyclic   (cf.\ \cite[Definition 27]{Goranko:Vakarelov:2006}). The definition of atomic inductive formulas is conveniently simpler than the general one, but as will be discussed in Section \ref{section:algebraic correspondence}, it displays the order-theoretic behaviour which separates inductive formulas from Sahlqvist formulas.
\end{remark}
\subsection{Sahlqvist formulas and atomic inductive formulas}

\paragraph{Sahlqvist formulas.}
A \textit{Sahlqvist formula} is a formula that is built up from Sahlqvist implications by freely applying boxes, conjunctions and disjunctions\footnote{Actually, in the definition in \cite{BdRV01} the application of disjunction is restricted, and is only allowed  between formulas that do not share any propositional variables. Proposition \ref{Sahlqvist:Antecedent:Prop} shows that this restriction is unnecessary in the Boolean case. More about this in Remark \ref{rem: distributive vs boolean SF}.}. Let $\mathsf{SF}$ be the class of Sahlqvist formulas.

\paragraph{Atomic inductive formulas.} An \textit{atomic inductive formula} is a formula that is built up from atomic inductive implications by freely applying boxes, conjunctions, and disjunctions. Let $\mathsf{AIF}$ be the class of atomic inductive formulas.

The correspondence results for Sahlqvist and atomic inductive formulas can be respectively reduced to the correspondence results for Sahlqvist and atomic inductive {\em implications}. This is an immediate consequence of the following proposition:
\begin{prop}\label{Sahlqvist:Antecedent:Prop}
Let $\Phi\in\{\mathsf{SF}, \mathsf{AIF}\}$. Every $\phi\in \Phi$ is semantically equivalent to a negated
$\Phi$-antecedent, and hence to a  $\Phi$-implication\footnote{By a $\Phi$-{\em antecedent} we mean a Sahlqvist antecedent if $\Phi = \mathsf{SF}$ or an atomic inductive antecedent if $\Phi = \mathsf{AIF}$; by a $\Phi$-{\em implication} we mean a Sahlqvist implication if $\Phi = \mathsf{SF}$ or an atomic inductive implication if $\Phi = \mathsf{AIF}$.}.
\end{prop}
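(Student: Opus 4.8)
The plan is to prove both cases ($\Phi = \mathsf{SF}$ and $\Phi = \mathsf{AIF}$) by a single induction on the way $\phi\in\Phi$ is built from $\Phi$-implications using $\Box$, $\wedge$ and $\vee$, driving every negation to one outermost position by De Morgan dualities. The two ``hence'' clauses are routine bookkeeping that I would dispose of first. A $\Phi$-implication $\phi\rightarrow\psi$ is equivalent to $\neg(\phi\wedge\neg\psi)$; since $\psi$ is positive, $\neg\psi$ is negative, so $\phi\wedge\neg\psi$ is a $\Phi$-antecedent conjoined with a negative formula, hence again a $\Phi$-antecedent. Conversely a negated $\Phi$-antecedent $\neg\alpha$ is equivalent to the $\Phi$-implication $\alpha\rightarrow\bot$, with $\bot$ vacuously positive. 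So it suffices to show that every $\phi\in\Phi$ is semantically equivalent to $\neg\alpha$ for some $\Phi$-antecedent $\alpha$.

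Next I would record the closure properties driving the induction: by definition $\Phi$-antecedents are closed under $\vee$, $\wedge$ and $\Diamond$, so the class of \emph{negated} $\Phi$-antecedents is closed, up to equivalence, under $\vee$, $\wedge$ and $\Box$ via $\neg\alpha_1\vee\neg\alpha_2\equiv\neg(\alpha_1\wedge\alpha_2)$, $\neg\alpha_1\wedge\neg\alpha_2\equiv\neg(\alpha_1\vee\alpha_2)$ and $\Box\neg\alpha\equiv\neg\Diamond\alpha$. The induction then runs cleanly: the base case (a $\Phi$-implication) is already a negated $\Phi$-antecedent by the previous paragraph, and if $\phi_1,\phi_2$ are equivalent to $\neg\alpha_1,\neg\alpha_2$ then $\phi_1\vee\phi_2$, $\phi_1\wedge\phi_2$ and $\Box\phi_1$ are equivalent to $\neg(\alpha_1\wedge\alpha_2)$, $\neg(\alpha_1\vee\alpha_2)$ and $\neg\Diamond\alpha_1$, each again a negated $\Phi$-antecedent. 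For $\Phi=\mathsf{SF}$ this finishes the argument, since Sahlqvist antecedents carry no side condition beyond their build-up.

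The delicate point, and the one place where I expect the real difficulty, is the \emph{acyclicity} side condition in the definition of atomic inductive antecedents. Each combination step above pools the box-formulas of the components, hence takes the \emph{union} of their dependency digraphs, and a union of acyclic digraphs need not be acyclic. The heart of the $\mathsf{AIF}$ case is therefore a lemma certifying that the antecedent produced at each step remains acyclic. The single-implication conversion is unproblematic: because $\psi$ is positive, every box-formula occurring in it has the form $\Box^{k}p$ with no inessential variables (an inessential variable sits in the antecedent of an implication and so occurs negatively, which is incompatible with positivity of its head), so passing to $\neg\psi$ adds only source vertices and no new edges, and $\phi\wedge\neg\psi$ stays acyclic. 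The main obstacle will be the $\vee$ and $\wedge$ steps, where $\alpha_1$ and $\alpha_2$ may share head variables and impose opposite dependencies between them, naively producing a directed cycle; I would attack this by a careful analysis of how the head/inessential-variable incidences of the two antecedents interact, aiming to show either that no cycle can arise or, more likely, isolating precisely which shared-variable configurations must be ruled out, thereby sharpening the analogue of the variable-disjointness restriction that the Sahlqvist case shows to be superfluous.
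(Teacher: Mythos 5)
Your first two paragraphs are, in substance, exactly the paper's proof: pass to $\neg\phi$, import the negation over the connectives, and induct on the construction of $\phi$ from $\Phi$-implications, using that $\Phi$-antecedents are closed under $\wedge$, $\vee$ and $\Diamond$ (so that negated $\Phi$-antecedents are closed, up to equivalence, under $\vee$, $\wedge$ and $\Box$), with base case $\neg(\alpha\to\mathsf{Pos})\equiv\alpha\wedge\neg\mathsf{Pos}$. For $\Phi=\mathsf{SF}$ your argument is complete and coincides with the paper's. For $\Phi=\mathsf{AIF}$, however, your proposal stops exactly where the work begins: you correctly isolate the lemma that is needed --- that acyclicity of the dependency digraph survives the $\wedge$ and $\vee$ steps, where the digraphs of the two components are pooled --- but you neither prove it nor settle whether it is true, announcing instead an intention to ``attack'' it. As a proof of the $\mathsf{AIF}$ half of the proposition, that is a genuine gap.

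You should know, though, that your worry is well-founded, and that the paper's own proof does not address it either: it simply invokes closure of $\Phi$-antecedents under conjunction and disjunction, which for atomic inductive antecedents presupposes precisely the acyclicity preservation you are doubting. Concretely, $(\Box(p\to\Box q)\to\bot)\wedge(\Box(q\to\Box p)\to\bot)$ is built from two atomic inductive implications (the dependency digraph of each conjunct has a single edge, $p\,E\,q$ and $q\,E\,p$ respectively), yet importing the negation yields, up to discarding the idle $\neg\bot$ conjuncts, the antecedent $\Box(p\to\Box q)\vee\Box(q\to\Box p)$, whose dependency digraph contains the same $2$-cycle as the paper's own example $\phi_3$; the result is atomic regular but not atomic inductive. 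So the closure lemma you would need is false as stated, and the $\mathsf{AIF}$ case requires either an additional hypothesis (e.g.\ that the set of box-formulas of the \emph{whole} formula already has an acyclic dependency digraph, or a shared-variable restriction on $\wedge$ and $\vee$ in the spirit of the restriction on $\vee$ in \cite{BdRV01}) or a genuinely different argument to produce an equivalent acyclic antecedent. Your diagnosis points in the right direction, but a diagnosis is not a proof, and your proposal leaves the $\mathsf{AIF}$ case open.
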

\begin{proof}
Fix a formula  $\phi\in \Phi$, and let $\phi'$ be the formula
obtained from $\neg \phi$ by importing the negation over all
connectives. Since $\phi \equiv \phi' \rightarrow \bot$, it is enough to show that $\phi'$ is a $\Phi$-antecedent, in order to prove the statement. This is done by induction on the construction of $\phi$ from
$\Phi$-implications. If $\phi$ is a $\Phi$-implication $\alpha \rightarrow
{\sf Pos}$, negating and rewriting it as $\alpha \land \neg {\sf
Pos}$ already turns it into a $\Phi$-antecedent. If $\phi =
\Box \psi$, where $\psi$ satisfies the claim, then $\neg \phi
\equiv \Diamond \neg \psi$ hence the claim follows for $\phi$,
because $\Phi$-antecedents are closed under diamonds. Likewise,
if $\phi = \psi_1 \land \psi_2$, where $\psi_1$ and $\psi_2$
satisfy the claim, then $\neg \phi \equiv \neg \psi_1 \lor \neg
\psi_2$ hence the claim follows for $\phi$, because
$\Phi$-antecedents are closed under disjunctions.
The case of $\phi = \psi_1 \lor \psi_2$ is completely analogous.
\end{proof}
%
%We now have the following proposition:
%

\subsection{Definite implications}
In the previous subsection, we saw how the correspondence results for formulas in $\mathsf{SF}$ and in $\mathsf{AIF}$ can be respectively reduced to the correspondence results for formulas in $\mathsf{SI}$ and in $\mathsf{AII}$. In their turn, the latter correspondence results can be respectively reduced to the correspondence results for the  subclasses of their {\em definite implications}. These are defined
by forbidding the use of disjunction, except within negative formulas, in the building of  antecedents. %in each of the classes defined above, we obtain the corresponding notions of definite  implications.
To be precise:
\begin{definition}

\begin{itemize}
\item A \emph{definite very simple Sahlqvist antecedent} is a formula built up from $\top$, $\bot$, negative formulas and propositional letters, using only $\wedge$ and $\Diamond$.
\item A \emph{definite Sahlqvist antecedent} is a formula built up from $\top$, $\bot$, negative formulas and boxed atoms, using only $\wedge$ and $\Diamond$.
\item A \emph{definite atomic regular antecedent} is a formula built up from $\top$, $\bot$, negative formulas, and atomic box formulas, using only $\wedge$ and $\Diamond$. A \emph{definite atomic inductive antecedent} is a definite atomic regular antecedent with an acyclic dependency digraph.
\end{itemize}
Let $\Phi\in \{\mathsf{VSSI}, \mathsf{SI}, \mathsf{AII}\}$. Then $\phi\to \psi\in \Phi$ is a  \emph{definite $\Phi$-implication} if $\phi$ is a definite $\Phi$-antecedent.
\end{definition}
In the next section, we will be able to confine our attention w.l.o.g.\ to the definite implications in each class, thanks to Fact \ref{fact:conjunction and local correspondence} and to Proposition \ref{Ind:Eqiv:Definite:Prop} below.
\begin{fact}\label{fact:conjunction and local correspondence}
If $\phi_i\in \ML$ locally corresponds to $\alpha_i(x)\in L_0$ for $1\leq i\leq n$, then $\bigwedge_{i=1}^n \phi_i$ locally corresponds to $\bigwedge_{i=1}^n \alpha_i(x)$.
\end{fact}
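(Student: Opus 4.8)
The plan is to reduce both sides to pointwise statements about the individual $\phi_i$ and $\alpha_i$, and then invoke the hypothesis componentwise. The only genuine content is the observation that, on the modal side, frame validity at a point distributes over conjunction, and that, on the first-order side, satisfaction distributes over conjunction as well; the hypothesis then links the two descriptions componentwise.

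First I would unwind the definition of local frame correspondence. That $\phi_i$ and $\alpha_i(x)$ are local frame correspondents means precisely that, for every Kripke frame $\f = (W,R)$ and every $w\in W$,
\[
\f, w \Vdash \phi_i \quad \mbox{ iff }\quad \f \models \alpha_i(x)[x := w].
\]
The goal is to establish the same biconditional with $\bigwedge_{i=1}^n \phi_i$ and $\bigwedge_{i=1}^n \alpha_i(x)$ in the respective roles of $\phi_i$ and $\alpha_i(x)$.

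The key modal-side step is the identity
\[
\f, w \Vdash \bigwedge_{i=1}^n \phi_i \quad \mbox{ iff }\quad \f, w \Vdash \phi_i \mbox{ for every } 1 \le i \le n.
\]
To prove it I would use the meaning-function clause $\val{\varphi \wedge \psi} = \val{\varphi} \cap \val{\psi}$, which by a trivial induction gives $\val{\bigwedge_{i=1}^n \phi_i}(V) = \bigcap_{i=1}^n \val{\phi_i}(V)$ for every valuation $V$. Hence $w \in \val{\bigwedge_i \phi_i}(V)$ iff $w \in \val{\phi_i}(V)$ for every $i$. Quantifying over all valuations $V$ and using that a universal quantifier commutes with conjunction, I obtain that $w \in \val{\bigwedge_i \phi_i}(V)$ for every $V$ exactly when, for each $i$, $w \in \val{\phi_i}(V)$ for every $V$; by the definition of validity at a point in a frame, this is precisely the displayed identity.

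Finally I would chain the biconditionals: by the identity just established, $\f, w \Vdash \bigwedge_i \phi_i$ iff $\f, w \Vdash \phi_i$ for all $i$; by the hypothesis applied to each $i$, this holds iff $\f \models \alpha_i(x)[x := w]$ for all $i$; and by the standard semantics of first-order conjunction this holds iff $\f \models \bigwedge_{i=1}^n \alpha_i(x)[x := w]$. Composing these yields the claim. There is no serious obstacle here; the only points deserving care are the commutation of the universal quantifier over valuations with the conjunction over the index $i$ (legitimate because both are universal), and the implicit appeal to the finiteness of $n$, which is what guarantees that $\bigwedge_{i=1}^n \alpha_i(x)$ is a genuine $L_0$-formula.
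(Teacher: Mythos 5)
Your proof is correct. The paper states this as a \emph{Fact} and gives no proof at all, treating it as routine; your argument---reducing point-validity of the conjunction to point-validity of each conjunct by commuting the universal quantifier over valuations with the finite conjunction, then chaining the componentwise biconditionals with the first-order semantics of $\wedge$---is precisely the standard argument the authors are leaving implicit, and your remark that the quantifier commutation is what makes this work (and would fail for disjunction) identifies the only point of substance.
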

\begin{prop}\label{Ind:Eqiv:Definite:Prop}
Let  $\Phi\in \{\mathsf{VSSI}, \mathsf{SI}, \mathsf{AII}\}$. Every $\phi\in \Phi$ is equivalent to a conjunction of definite implications in $\Phi$.
\end{prop}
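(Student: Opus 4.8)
The plan is to isolate a single normal-form lemma about antecedents and then read off the statement by a purely propositional manipulation. The semantic facts I would use are that $\Diamond$ distributes over disjunction, $\Diamond(\alpha\vee\beta)\equiv\Diamond\alpha\vee\Diamond\beta$ (immediate from \eqref{def:mr}, since $m_R$ preserves unions), that $\wedge$ distributes over $\vee$, and that each class of \emph{definite} antecedents is, by its very definition, closed under $\wedge$ and $\Diamond$. Together these let me push every disjunction occurring outside a negative subformula up to the top of the antecedent, which is exactly what is forbidden by definiteness.

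Concretely, the central claim is: for each $\Phi\in\{\mathsf{VSSI},\mathsf{SI},\mathsf{AII}\}$, every $\Phi$-antecedent $\phi$ is semantically equivalent to a disjunction $\bigvee_{i}\alpha_i$ of definite $\Phi$-antecedents, each of whose occurring atomic box-formulas (in the $\mathsf{AII}$ case) is among those occurring in $\phi$. I would prove this by induction on the construction of $\phi$ from the class-specific atoms (namely $\top$, $\bot$, negative formulas, and respectively propositional letters, boxed atoms, or atomic box-formulas) using $\vee$, $\wedge$ and $\Diamond$. In the base case each atom is already a one-disjunct definite antecedent. If $\phi=\alpha\vee\beta$, concatenating the two disjunctions supplied by the induction hypothesis suffices. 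If $\phi=\alpha\wedge\beta$ with $\alpha\equiv\bigvee_i\alpha_i$ and $\beta\equiv\bigvee_j\beta_j$, distributivity gives $\phi\equiv\bigvee_{i,j}(\alpha_i\wedge\beta_j)$, and each $\alpha_i\wedge\beta_j$ is definite since definite antecedents are closed under $\wedge$. If $\phi=\Diamond\alpha$ with $\alpha\equiv\bigvee_i\alpha_i$, additivity of $m_R$ gives $\phi\equiv\bigvee_i\Diamond\alpha_i$, and each $\Diamond\alpha_i$ is definite since definite antecedents are closed under $\Diamond$.

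Granting the claim, the proposition follows at once. Writing $\phi\equiv\bigvee_i\alpha_i$ with the $\alpha_i$ definite, a $\Phi$-implication $\phi\to\psi$ is equivalent to $\bigwedge_i(\alpha_i\to\psi)$; since $\psi$ is left untouched it is still positive, so each $\alpha_i\to\psi$ is a definite $\Phi$-implication, which is precisely the required conjunction.

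The one place demanding genuine care, and the main obstacle, is the class $\mathsf{AII}$, where being a definite atomic inductive antecedent requires not merely the correct build-up but also an \emph{acyclic} dependency digraph. Here I would use the invariant recorded in the claim: the distributivity rewritings never introduce new atomic box-formulas, so the box-formulas of each disjunct $\alpha_i$ form a subset of those of $\phi$. Consequently the dependency digraph of $\alpha_i$ is the subgraph of the dependency digraph of $\phi$ induced on the heads occurring in $\alpha_i$, with the edge relation restricted accordingly; and a subgraph of an acyclic digraph is acyclic. This guarantees that each $\alpha_i$ is a \emph{definite atomic inductive} antecedent, so that the induction—and hence the whole argument—goes through uniformly for all three classes.
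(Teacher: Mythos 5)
Your proof is correct and follows essentially the same route as the paper's, which likewise exhaustively distributes $\Diamond$ and $\wedge$ over $\vee$ in the antecedent and then applies $A \vee B \rightarrow C \equiv (A \rightarrow C) \wedge (B \rightarrow C)$. Your explicit verification that acyclicity of the dependency digraph is preserved for the $\mathsf{AII}$ case (since no new atomic box-formulas are introduced) is a welcome detail that the paper's one-line proof leaves implicit.
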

\begin{proof}
Note that $\phi$ can be equivalently rewritten as a conjunction of definite  implications in $\Phi$ by exhaustively distributing $\Diamond$ and $\wedge$ over $\vee$ in the antecedent, and then applying the equivalence $A \vee B \rightarrow C \equiv (A \rightarrow C) \wedge (B \rightarrow C)$.
\end{proof}

\begin{remark}
\label{rem: distributive vs boolean SF}
As we mentioned already in Section \ref{sec:quick}, correspondence theory has been extended to logics with a weaker than classical propositional base. Accordingly, as will be clear from the current account, the correspondence mechanism is independent of the Boolean setting we are in.  However, there is one single point in our presentation in which we took advantage of the specific properties of the classical setting, namely Proposition \ref{Sahlqvist:Antecedent:Prop}. Thanks to classical negation, we are able to pack (i.e.\ prove the semantical equivalence of) any Sahlqvist/atomic inductive formula in (to) {\em one} Sahlqvist/atomic inductive implication. In settings where  classical negation is not available, this cannot be done. Still, Sahlqvist formulas can be defined as in \cite{BdRV01}, i.e.\  allowing the application of disjunction only  between formulas that do not share any proposition letters, and the correspondence result for this class can still be reduced to the correspondence result for Sahlqvist implications, thanks to Fact \ref{fact:conjunction and local correspondence} and the following facts (cf.\ \cite[Lemma 3.53]{BdRV01}):
\begin{enumerate}
\item If $\phi\in \ML$ locally corresponds to $\alpha(x)\in L_0$, then for every $k\in \mathbb{N}$, $\Box^k\phi$ locally corresponds to $\forall y(x R^ky\to\alpha(y))$.
\item If $\phi_i\in \ML$ locally corresponds to $\alpha_i(x)\in L_0$ for $1\leq i\leq n$, and for every $1\leq i, j\leq n$ if $i\neq j$ then $\phi_i$ and $\phi_j$ do not have proposition letters in common, then $\bigvee_{i = 1}^n \phi_i$ locally corresponds to $\bigvee_{i = 1}^n \alpha(x)_i$.
\end{enumerate}
\end{remark}

\section{Algebraic correspondence}
\label{section:algebraic correspondence}

The present section is the heart of the paper. In it, we will proceed incrementally and give the algebraic correspondence argument for the definite formulas of each class defined in Section \ref{ssec:sahlqvist and linear ind}. More details on the methodology are given in the next subsection.
\subsection{The general reduction strategy}
\label{subsec:the general reduction strategy}
%We will now implement the strategy, already outlined in the introduction, to obtain the local first-order frame correspondents of the formulas in the classes introduced in the previous section.
Our starting point is the well known fact, already mentioned above, that any modal formula $\varphi$ {\em locally} corresponds to its standard second-order translation, i.e.
\begin{equation}\label{2ndOrderCorrespond:Eqn}
\f, w\Vdash \varphi \quad \mbox{ iff }\quad \f\models \forall P_1\ldots\forall P_n
ST_x(\varphi)[x :=w].
\end{equation}
We are interested in strategies that produce a semantically equivalent first-order condition out of the default local second-order correspondent of $\varphi$ on the right-hand side of (\ref{2ndOrderCorrespond:Eqn}).

A large and natural class of formulas for which, by definition, this is possible is introduced  by van Benthem \cite{vB83}: %a class which we, following \cite{Conradie:Gor:Vak:AiML:04}, will refer to as the \emph{van Benthem-formulas}.
\begin{definition}\label{vanBentFmls:Def}
The class of \emph{van Benthem-formulas}\footnote{this name first appears in \cite{Conradie:Gor:Vak:AiML:04}.} consists of those formulas $\phi \in \mathrm{ML}$ for which $\forall P_1 \ldots \forall P_n ST_x(\varphi)$ is equivalent to $\forall P'_1 \ldots \forall P'_n ST_x(\varphi)$ where the quantifiers $\forall P'_1 \ldots \forall P'_n$ range, not over all subsets of the domain, but only those that are definable by means of $L_0$-formulas.
\end{definition}
%
%So the standard second-order translation of every van Benthem-formula is equivalent to the set of its $L_0$ substitution instances.
 The van Benthem-formulas are the designated targets of the reduction strategy in its most general form. To see this,  for every frame $\f = (W, R)$, let $$\mathsf{Val_{L_0}}(\f) = \{V':\mathsf{Prop}\to \p(W)\mid V'(p)\mbox{ is } L_0\mbox{-definable for every } p\in \mathsf{Prop} \}.$$ This is the set of the {\em tame} valuations on $\f$. Using the notation introduced in (\ref{notation phiV}), if $\varphi\in \ML$ is a van Benthem formula, then the following chain of equivalences holds for every $\f$ and  $w\in W$:

\begin{equation}
\label{crucial iff}
\begin{tabular}{r c l}
$\f, w\Vdash \varphi$ %& iff & $\f\models \forall P_1\ldots\forall P_n
%$ST_x(\varphi)\val{w}$\\
& iff & $w\in \val{\varphi}(V)$ for every $V$ on $\f$\\
& iff & $w\in \val{\varphi}(V')$ for every $V'\in \mathsf{Val_{L_0}}(\f).$\\
\end{tabular}
\end{equation}

\begin{theorem}\label{vanBentFml:Theorem}
Every van Benthem-formula has a local first-order frame correspondent.
\end{theorem}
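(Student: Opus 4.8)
The plan is to show that, for a van Benthem-formula $\varphi = \varphi(p_1,\ldots,p_n)$ (Definition \ref{vanBentFmls:Def}), the class
\[
K := \{(\f, w) \mid \f, w \Vdash \varphi\}
\]
of pointed Kripke frames, regarded as $L_0$-structures with one distinguished element, is axiomatizable by a single $L_0$-formula $\alpha(x)$ with free variable $x$; this $\alpha(x)$ is then the desired local correspondent. The argument is model-theoretic, in the spirit of van Benthem's original work, and rests on ultraproducts together with compactness. First I would recast the defining property as an (infinitary) first-order condition. By the chain (\ref{crucial iff}), $\f, w \Vdash \varphi$ iff $w \in \val{\varphi}(V')$ for every tame valuation $V' \in \mathsf{Val_{L_0}}(\f)$. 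Since only $p_1,\ldots,p_n$ occur in $\varphi$, such a $V'$ is determined on the relevant letters by a tuple $\bar\delta = (\delta_1,\ldots,\delta_n)$ of $L_0$-formulas in a single free variable (with the point $x$, and possibly further elements, admitted as parameters), via $V'(p_j) = \{v \mid \f \models \delta_j(v)\}$. Substituting each $\delta_j$ for the monadic predicate $P_j$ in $ST_x(\varphi)$ produces a pure $L_0$-formula $ST_x(\varphi)[\bar P := \bar\delta]$ in the free variable $x$, and the standard translation gives $w \in \val{\varphi}(V')$ iff $\f \models ST_x(\varphi)[\bar P := \bar\delta][x := w]$. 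Letting $\Gamma(x)$ be the set of all such formulas (universally closing over any parameters) as $\bar\delta$ ranges over all tuples of $L_0$-formulas, we obtain $K = \mathsf{Mod}(\Gamma)$, so $K$ is an $\mathsf{EC}_\Delta$ class and in particular is closed under ultraproducts and elementary equivalence. It is precisely here that the van Benthem property is indispensable: without it the right-hand side of (\ref{crucial iff}) cannot be replaced by quantification over \emph{definable} valuations, and $K$ would remain only $\Pi^1_1$.

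Next I would establish that the complement $K^c$ is also closed under ultraproducts---a fact that holds for every modal formula, with no appeal to the van Benthem property. If $\f_i, w_i \not\Vdash \varphi$ for each $i$, choose a valuation $V_i$ on $\f_i$ with $w_i \notin \val{\varphi}(V_i)$, i.e.\ $(\f_i, V_i) \models \neg ST_x(\varphi)[x := w_i]$, and form the $L_1$-ultraproduct $(\f, V) = \prod_U (\f_i, V_i)$ over any ultrafilter $U$. By {\L}o{\'s}'s theorem $(\f, V) \models \neg ST_x(\varphi)[x := [w_i]_U]$, so the ultraproduct frame already falsifies $\varphi$ at $[w_i]_U$, witnessing $(\f, [w_i]_U) \in K^c$. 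Since $K = \mathsf{Mod}(\Gamma)$ is closed under elementary equivalence, so is $K^c$; hence $K^c$ too is $\mathsf{EC}_\Delta$.

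Finally, $K$ and $K^c$ are complementary classes each closed under ultraproducts (and under isomorphism). By Keisler's theorem---or, concretely, by applying compactness to the jointly inconsistent axiom sets of $K$ and $K^c$ and extracting finite subsets that separate them---$K$ is axiomatized by a \emph{single} $L_0$-sentence, equivalently by a single $L_0$-formula $\alpha(x)$ in the one free variable $x$. Then $\f, w \Vdash \varphi$ iff $\f \models \alpha(x)[x := w]$ for all $\f$ and $w$, which is exactly the assertion that $\alpha(x)$ is a local first-order frame correspondent of $\varphi$.

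I expect the genuinely non-routine content to lie in the first step: recognizing the van Benthem property as the device that collapses the second-order validity condition into the first-order schema $\Gamma(x)$, obtained by substituting all possible $L_0$-definitions (with the evaluation point as an allowed parameter) for the monadic predicates. The passage from a complementary pair of $\mathsf{EC}_\Delta$ classes to one formula is standard compactness, but it is inherently non-constructive: in line with Chagrova's theorem (Theorem \ref{Chagrova:Thm}), the proof yields no effective bound on $\alpha(x)$ and gives no algorithm for deciding membership in the van Benthem class. This matches the remark in the text that the constructions underlying van Benthem's characterization are infinitary.
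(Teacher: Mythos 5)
Your proof is correct, but its second half takes a genuinely different route from the paper's. The two arguments share the decisive first step: the van Benthem property is precisely what lets one trade the second-order prefix $\forall \overline{P}$ for the infinite set of all $L_0$-substitution instances of $\mathrm{ST}_x(\phi)$ (the paper's $\Sigma$, your $\Gamma$), so that the class $K$ of pointed frames validating $\phi$ is $\mathsf{EC}_\Delta$. From there the paper finishes in one economical stroke: since $\Sigma \models \forall\overline{P}\,\mathrm{ST}_x(\phi)[x:=w]$ and hence $\Sigma\models \mathrm{ST}_x(\phi)[x:=w]$, compactness yields a finite $\Sigma'\subseteq\Sigma$ with $\Sigma'\models \mathrm{ST}_x(\phi)[x:=w]$; and because $\Sigma'$ consists of $L_0$-formulas in which the predicates $\overline{P}$ do not occur, every $\overline{P}$-variant of a model of $\Sigma'$ still satisfies $\mathrm{ST}_x(\phi)$, so $\Sigma'$ already entails the full second-order translation and $\bigwedge\Sigma'$ is the correspondent. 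You instead show that the complement class $K^c$ is closed under ultraproducts (via {\L}o\'{s}, a fact true for arbitrary modal formulas) and under elementary equivalence, invoke the Frayne--Morel--Scott/Keisler characterization to conclude that $K^c$ is also $\mathsf{EC}_\Delta$, and then separate the complementary pair by a single sentence via compactness. This is sound, and it exposes the general model-theoretic shape of the result (a complementary pair of $\mathsf{EC}_\Delta$ classes), but it imports machinery --- ultraproducts and the characterization of $\Delta$-elementary classes --- that the paper's ``the finite subset does not mention $\overline{P}$'' trick renders unnecessary. What your route buys is a reusable structural picture; what the paper's buys is brevity and self-containment. Both are, as you observe, equally non-constructive and consistent with Theorem \ref{Chagrova:Thm}.
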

\begin{proof}
Let $\phi$ be a van Benthem-formula and let $\Sigma$ be the set of all $L_0$ substitution instances of $\mathrm{ST}_x(\phi)$, i.e.\ the set of all formulas obtained by substituting $L_0$-formulas $\alpha(y)$ for occurrences $P(y)$ of predicate symbols in $\mathrm{ST}_x(\phi)$. Clearly, $\forall \overline{P} \mathrm{ST}_x(\phi) \models \Sigma [x:= w]$, where $\overline{P}$ is the vector of all predicate symbols occurring in $\mathrm{ST}_x(\phi)$. Also, since $\phi$ is a van Benthem-formula, $\Sigma \models \forall \overline{P} \mathrm{ST}_x(\phi) [x := w]$.
Then $\Sigma \models \mathrm{ST}_x(\phi) [x := w]$, and since this is a first-order consequence, we may appeal to the compactness theorem to find some finite subset $\Sigma' \subseteq \Sigma$ such that $\Sigma' \models \mathrm{ST}_x(\phi) [x := w]$.

We claim that $\Sigma' \models \forall \overline{P} \mathrm{ST}_x(\phi) [x := w]$. Indeed, let $\mathcal{M}$ be any $L_1$-model such that $\mathcal{M} \models \Sigma'[x := w]$. Since the predicate symbols in $\overline{P}$ do not occur in $\Sigma'$, every $\overline{P}$-variant of $\mathcal{M}$\footnote{By which we mean a model which difers from $\mathcal{M}$ only possibly in the interpretation of the predicates in $\overline{P}$.} also models $\Sigma'$, and hence also $\mathrm{ST}_x(\phi)$. It follows that $\mathcal{M} \models \forall \overline{P} \mathrm{ST}_x(\phi)[x:= w]$. Thus we may take $\bigwedge \Sigma'$ as a local first-order frame correspondent for $\phi$.
\end{proof}

However, %in their full generality, valuations in $\mathsf{Val_{L_0}}(\f)$ can still be arbitrarily complicated. Moreover,
relying on compactness, as it does, Theorem \ref{vanBentFml:Theorem} is of little use if we want to explicitly calculate the first-order correspondent for a given $\phi\in \ML$, or devise an algorithm which produces first-order frame correspondents for each member of a given {\em class} of modal formulas; therefore a more refined strategy is in order, the development of which is the core of correspondence theory.

%As mentioned in the beginning, our treatment  aims at integrating the model-theoretic and the algebraic accounts of correspondence theory: the model-theoretic one is that
Each class of modal formulas of Subsection \ref{ssec:sahlqvist and linear ind} %can be understood as follows: each of those classes
is defined so as to guarantee that the second `iff' (i.e.\ the non trivial one) of (\ref{crucial iff}) can be proved not just for  $V'$ ranging arbitrarily  over $\mathsf{Val_{L_0}}(\f)$ but rather ranging over a much more restricted and nicely defined subset  of it. Moreover,  each of these subsets of tame valuations is defined in such a way as to enable the algorithmic generation of the first-order correspondents of the members of the class of formulas it is paired with. More specifically, as we will see next,  the following pairings hold between classes of formulas and subsets of tame valuations (recall notation introduced in ):
\begin{center}
\begin{tabular}{r| l}
$\mathsf{UF}$ & $V': \mathsf{Prop}\to \{\varnothing, W\}$\\
$\mathsf{VSSI}$ & $V': \mathsf{Prop}\to \p_{fin}(W)$\\
$\mathsf{SI}$ & $V': \mathsf{Prop}\to \bigcup_{j = 1}^n\{R^{k_j}[x_j]\mid k_j \in \mathbb{N}, x_j \in W \}$,
\end{tabular}
\end{center}
where $\p_{fin}(W)$ is the collection of finite subsets of $W$, and the notation $R^{k_j}[x_j]$ has been introduced on page \pageref{notation:rk}. So far, our account has been faithful to the textbook exposition, albeit with slightly different notation. The  algebraic treatment which we are about to introduce crucially provides an intermediate step which clarifies the textbook account:  each class of modal formulas of Section \ref{ssec:sahlqvist and linear ind} %can be understood as follows: each of those classes
is defined so as to guarantee that, for every formula $\phi$ in the given class, the meaning function $\val{\phi}$ enjoys certain  purely order-theoretic properties which ensures that the second crucial `iff' can be proved for $V'$ ranging in the corresponding subclass of tame valuations (the definition of which, as we already mentioned, underlies the algorithmic generation of the first-order correspondent of $\phi$).
%of this paper is that the actual syntactic specification of each class in Subsection \ref{ssec:sahlqvist and linear ind} can be explained in a way that is independent from logic and is in fact algebraic and order-theoretic.
We start to see how this works  in the next subsection.

%The van Benthem-formulas constitute a large and elegantly defined class of elementary modal formulas. However, if we are interested in %the first-order frame correspondent of a particular van Benthem-formula, the above definition and theorem are if little use for %calculating this correspondent. To be able to perform such explicit calculations, we will have to narrow our attention. Indeed, we will %now see that the uniform, (very) simple Sahlqvist, Sahlqvist, and inductive formulas all represent special subclasses of the van %Benthem-formulas where the quantifiers $\forall P'_1 \ldots \forall P'_n$ can be restricted to range over particularly nice %$L_0$-definable subsets of the domain. The required $L_0$-definable subsets will, step by step, become more complicated as we ascend %the hierarchy from the uniform to the inductive formulas.

%In the subsections on uniform and very simple Sahlqvist formulas our starting point will be the syntax of these classes from which we %will abstract the order theoretic conditions that underly the fact of their first-order definability, since this is the perspective %which is probably most familiar to most readers. However, in the subsequent subsections dealing with the other formula classes the %emphasis will start to fall increasingly on the primacy of the order-theoretic conditions and only derivatively on the syntactic shapes %of formulas which are sufficient for them to fulfill these conditions.

\subsection{Uniform and closed formulas}

\paragraph{The reduction strategy.}  Among all the first-order definable  valuations  $V$ on $\f$, the simplest ones are those which assign $W$ or $\emptyset$ to each propositional variable. Indeed, let $V_0$ be such a valuation and suppose that the following were equivalent for the modal formula $\varphi$:
\begin{center}
\begin{tabular}{r c l}
$\f, w\Vdash \varphi$ %& iff & $\f\models \forall P_1\ldots\forall P_n
%$ST_x(\varphi)\val{w}$\\
& iff & $w\in \val{\varphi}(V)$ for all $V$ on $\f$\\
& iff & $w\in \val{\varphi}(V_0)$\\
\end{tabular}
\end{center}
This would in turn mean that
\begin{center}
\begin{tabular}{c l}
& $\f\models \forall P_1\ldots\forall P_n ST_x(\varphi){[x := w]}$\\
 iff &
$\f\models ST_x(\varphi){[x := w, P_1 := V_0(p_1), \ldots, P_n := V_0(p_n)]}$\\
\end{tabular}
\end{center}
Therefore, we could equivalently transform the formula above into a first-order formula by replacing each occurrence $P_i z$ with either $z \neq z$ if $V_0(p_i)=\emptyset$, or with $z=z$ if $V_0(p_i)= W$. This is enough to effectively generate the first-order correspondent of $\varphi$.

\paragraph{Order-theoretic conditions.}

For which formulas $\phi$ would it be possible to implement the reduction strategy outlined above? %Before answering this question in terms of the syntactic shape of formulas, let us look at
The answer to this question can be given in purely order-theoretic terms: %the order theoretic conditions necessary to make this strategy work. The following lemma will be useful.

\begin{prop}\label{sahlqvist for uniform}
Let $( X_i, \leq)$, $i = 1,\ldots, n$, and $(Y, \leq)$ be posets. Let
each $X_i$ have a maximum, $\top_i$, and a minimum, $\bot_i$. Let $f:
X_1\times \cdots \times X_n \to Y$. If $f$ is either order-preserving or order-reversing in each
coordinate, then the minimum of $f$ exists and is $f(c_1, \ldots
c_n)$, where, for every $i$, $c_i = \bot_i$ if $f$ is order
preserving in the $i$-th coordinate, and $c_i = \top_i$ if $f$ is
order-reversing in the $i$-th coordinate.
\end{prop}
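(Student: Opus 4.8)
The plan is to verify directly that the value $f(c_1,\ldots,c_n)$ lies below every value taken by $f$. Since this value is attained at the point $(c_1,\ldots,c_n)$, which genuinely belongs to the domain (each $c_i$ is either $\bot_i$ or $\top_i$, hence an element of $X_i$), it will then by definition be the minimum of $f$, i.e.\ the least value attained by $f$ in $Y$. So it suffices to fix an arbitrary tuple $(x_1,\ldots,x_n)\in X_1\times\cdots\times X_n$ and prove that $f(c_1,\ldots,c_n)\leq f(x_1,\ldots,x_n)$.

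The key observation is that order-preservation and order-reversal are hypotheses about each coordinate \emph{separately}; one cannot compare $(c_1,\ldots,c_n)$ with $(x_1,\ldots,x_n)$ in a single move, but must interpolate between them one coordinate at a time. Accordingly I would introduce the finite chain of tuples $z^0,z^1,\ldots,z^n$ given by $z^i = (c_1,\ldots,c_i,x_{i+1},\ldots,x_n)$, so that $z^0 = (x_1,\ldots,x_n)$ and $z^n = (c_1,\ldots,c_n)$, with consecutive tuples $z^{i-1}$ and $z^i$ agreeing in every coordinate except the $i$-th. The heart of the argument is the single-step inequality $f(z^i)\leq f(z^{i-1})$ for each $i$, which I would prove by splitting into the two cases of the hypothesis. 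If $f$ is order-preserving in the $i$-th coordinate, then $c_i=\bot_i\leq x_i$, and since $z^{i-1}$ and $z^i$ differ only there, monotonicity in that coordinate gives $f(z^i)\leq f(z^{i-1})$. If $f$ is order-reversing in the $i$-th coordinate, then $c_i=\top_i\geq x_i$, and order-reversal in that coordinate again yields $f(z^i)\leq f(z^{i-1})$.

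Chaining these $n$ inequalities by transitivity of $\leq$ on $Y$ gives $f(z^n)\leq f(z^0)$, that is, $f(c_1,\ldots,c_n)\leq f(x_1,\ldots,x_n)$, as required. I do not expect any genuine obstacle here: the only point demanding care is conceptual rather than computational, namely respecting the fact that the monotonicity assumptions hold coordinatewise, which is exactly what forces the stepwise interpolation along the chain $z^0,\ldots,z^n$ instead of a single comparison. The chaining itself is a routine induction on the length of the chain, and the two cases of the single-step inequality are symmetric to one another up to reversing the relevant order on $X_i$.
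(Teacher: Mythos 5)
Your proof is correct: the coordinatewise interpolation along the chain $z^0,\ldots,z^n$, with the two symmetric cases for order-preserving and order-reversing coordinates, is exactly the intended argument. The paper in fact states Proposition \ref{sahlqvist for uniform} without proof, treating it as routine, so your write-up simply supplies the standard argument the authors left implicit; there is nothing to compare and no gap.
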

%
%From this we have the following:
%
\begin{cor}
\label{ord th for uniform}
\em{For every $\varphi\in \ML$, if $\val{\varphi}: \p(W)^n\to \p(W)$ is order-preserving or order-reversing in each coordinate, then the following are equivalent:
\begin{enumerate}
\item  $\forall V[w\in \val{\varphi}(V)].$
\item  $w \in \val{\varphi}(V_0)$, where $V_0(p_i) = W$ if $\val{\phi}$ is order-reversing in its $i$th coordinate, and $V_0(p_i) = \emptyset$ if $\val{\phi}$ is order-preserving in in its $i$th coordinate.
\end{enumerate}}
\end{cor}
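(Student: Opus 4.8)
The plan is to derive this corollary directly from Proposition \ref{sahlqvist for uniform} by specialising all the posets to the complex-algebra lattice. Concretely, I would set $X_i = Y = (\p(W), \subseteq)$ for each $i = 1, \ldots, n$, and take $f = \val{\varphi}$. This is a legitimate instantiation: $(\p(W), \subseteq)$ is a poset whose maximum is $\top_i = W$ and whose minimum is $\bot_i = \emptyset$, and by hypothesis $\val{\varphi}$ is order-preserving or order-reversing in each coordinate. Proposition \ref{sahlqvist for uniform} then yields that the minimum of $\val{\varphi}$ exists and is attained at the tuple $(c_1, \ldots, c_n)$ with $c_i = \emptyset$ when $\val{\varphi}$ is order-preserving in the $i$th coordinate and $c_i = W$ when it is order-reversing there. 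Read as a valuation, this tuple is precisely the $V_0$ of the statement, so $\val{\varphi}(V_0)$ is the $\subseteq$-least element of $\{\val{\varphi}(V) \mid V \text{ a valuation on } \f\}$.

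The second step is to turn the existence of this minimum into the stated equivalence. Using the identification (\ref{notation phiV}) of $\val{\varphi}(V)$ with $\val{\varphi}(V(p_1), \ldots, V(p_n))$, quantifying over all valuations on $\f$ amounts to quantifying over all tuples of $\p(W)^n$, so the hypothesis of the proposition applies verbatim. The implication from (1) to (2) is then immediate, by instantiating the universal statement at the particular valuation $V_0$. For the converse, minimality gives $\val{\varphi}(V_0) \subseteq \val{\varphi}(V)$ for every $V$; hence if $w \in \val{\varphi}(V_0)$ then $w \in \val{\varphi}(V)$ for all $V$, which is exactly (1).

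I do not expect a genuine obstacle here, as the corollary is essentially a reading of the proposition in the intended model, the real content residing in Proposition \ref{sahlqvist for uniform}, which I am permitted to assume. The only points demanding a little care are bookkeeping ones: copying the case matching ($\bot_i$ with order-preserving, $\top_i$ with order-reversing) faithfully from the proposition into the definition of $V_0$, and being explicit that ``the minimum of $f$'' denotes a least element of the range that is \emph{attained} at $V_0$, so that membership of $w$ in every value $\val{\varphi}(V)$ collapses to membership in the single value $\val{\varphi}(V_0)$.
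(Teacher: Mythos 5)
Your argument is correct and follows exactly the paper's own proof: instantiate Proposition \ref{sahlqvist for uniform} with every poset equal to $(\p(W),\subseteq)$ to conclude that $\val{\phi}(V_0)$ is the minimum of $\val{\phi}$, whence $\val{\phi}(V_0)\subseteq\val{\phi}(V)$ for every valuation $V$, giving $(2)\Rightarrow(1)$, while $(1)\Rightarrow(2)$ is immediate by instantiation. Your additional bookkeeping remarks (the identification of valuations with tuples and the matching of $\bot_i$/$\top_i$ with order-preserving/order-reversing coordinates) are exactly what the paper leaves implicit.
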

\begin{proof}
$(1 \Rightarrow 2)$ Clear. $(2\Rightarrow 1)$ It follows from Proposition \ref{sahlqvist for uniform} that $\val{\phi}(V_0)$ is the minimum of $\val{\phi}$ and hence $\val{\phi}(V_0) \subseteq \val{\phi}(V)$ for every valuation $V$.
\end{proof}
\paragraph{Syntactic conditions.} Now that we have the reduction strategy and sufficient order-theoretic conditions for the strategy to apply, it only remains to verify that these conditions are met by the uniform formulas. And indeed, the following proposition  can be easily shown by induction on $\phi$:

\begin{prop}
\label{orderpreserving operations}
If $\varphi\in \ML$ is a uniform formula, then $\val{\varphi}$ is order-preserving (reversing) in those coordinates corresponding to propositional variables in which $\phi$ is positive (negative).
\end{prop}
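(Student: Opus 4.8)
The plan is to prove, by induction on the structure of $\varphi$, the following per-variable statement for \emph{every} $\ML$-formula $\varphi$ and every propositional variable $p_i$: if $\varphi$ is positive in $p_i$ then $\val{\varphi}$ is order-preserving in its $i$-th coordinate, and if $\varphi$ is negative in $p_i$ then $\val{\varphi}$ is order-reversing in its $i$-th coordinate. Here ``order-preserving/reversing in the $i$-th coordinate'' means that, fixing all other arguments and letting only the $i$-th vary, the resulting unary map on $(\p(W), \subseteq)$ is monotone/antitone. Since a uniform $\varphi$ is, by definition, positive or negative in each of its variables, the proposition follows at once. The crucial design decision is to carry \emph{both} the order-preserving and the order-reversing clauses through the induction \emph{simultaneously}: the negation step converts one into the other, so neither clause can be established in isolation.

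The ingredients of the inductive step are three elementary monotonicity facts about the operations of $\f^+$, each read off directly from their definitions: (i) complementation $X \mapsto W\setminus X$ is order-reversing; (ii) union $\cup$ is order-preserving in each of its two arguments; and (iii) the semantic diamond $m_R$ is order-preserving, since $X\subseteq Y$ immediately yields $m_R(X)\subseteq m_R(Y)$ from (\ref{def:mr}). These combine with the standard sign calculus for composition: an order-reversing map precomposed with an order-reversing coordinate map yields an order-preserving one, an order-preserving map precomposed with an order-reversing coordinate map yields an order-reversing one, and so forth.

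For the base cases, $\val{\bot}$ is constant and hence both order-preserving and order-reversing in every coordinate (consistent with $\bot$ being vacuously positive and negative in each $p_i$), while $\val{p_i}$ is the $i$-th projection, which is order-preserving in its $i$-th coordinate and constant in the others. For the inductive cases I would use the polarity bookkeeping for each connective together with the induction hypothesis: under $\neg$ the polarity of each occurrence of $p_i$ flips, so ``$\neg\psi$ positive in $p_i$'' amounts to ``$\psi$ negative in $p_i$'', and the hypothesis for $\psi$ combined with fact (i) delivers order-preservation of $\val{\neg\psi}=W\setminus\val{\psi}$; under $\vee$ and under $\Diamond$ polarities are unchanged, so the relevant hypothesis combined with facts (ii) and (iii) respectively transfers monotonicity/antitonicity to $\val{\psi_1\vee\psi_2}=\val{\psi_1}\cup\val{\psi_2}$ and to $\val{\Diamond\psi}=m_R(\val{\psi})$. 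In the $\vee$ case one uses that, when $p_i$ occurs only positively (resp.\ negatively) in the whole disjunction, it does so in each disjunct separately, an absent occurrence being vacuously of the right polarity.

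The step I expect to demand the most care is precisely the negation case, not because it is computationally hard but because it is where the two halves of the statement are genuinely interlocked: establishing that $\val{\neg\psi}$ is order-preserving when $\neg\psi$ is positive in $p_i$ presupposes that one has already proved that $\val{\psi}$ is order-reversing when $\psi$ is negative in $p_i$. This is the structural reason the induction hypothesis must bundle both clauses together. Once this is recognized the remaining verifications are routine; I would also note, as a minor preliminary, that uniformity in $p_i$ is inherited by subformulas, since all occurrences of $p_i$ inside a fixed subformula lie under the same number of negations counted from the root, so their relative polarity is uniform whenever the global one is, which guarantees that the inductive hypotheses are actually available at each step.
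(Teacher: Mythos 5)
Your proof is correct and is precisely the induction on $\phi$ that the paper invokes without writing out (the paper states the proposition ``can be easily shown by induction on $\phi$'' and gives no further detail), including the essential device of carrying the order-preserving and order-reversing clauses through the induction simultaneously so that the negation case can swap them. The only quibble is in your closing remark: occurrences of $p_i$ inside a fixed subformula need not lie under the \emph{same number} of negations counted from the root, only under numbers of the same parity, which is all that is needed and is in any case already built into the polarity bookkeeping of your inductive cases.
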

%
%\begin{proof}
%The proof for the above proposition follows from the following claim using induction on the structure of $\varphi$.\\
%{\em Claim:} For every set $W$, the intersection, union, semantic box operator $l_R$ and semantic diamond operator $m_R$ are order preserving operations while the complement operator is order-reversing. \\
%\marginpar{\tiny This only makes sense, as stated, for unary maps. I would rather replace this whole proof simply with ``By induction in $\phi$''.}The composition of $n$ maps, $m\leq n$ of which are order-reversing, is order preserving if $m = 2k$ and is order-reversing if $m= 2k+1$.
%\end{proof}
%
%
\begin{example}
Let us consider the uniform formula $\Box \Diamond  p$. The minimal valuation for $p$ is $V_0(p)=\emptyset$, since the formula is positive and hence order-preserving in $p$. The standard translation of this formula gives
\begin{center}
\begin{tabular}{c l}
&$\f\models \forall P \; \forall y (Rxy \rightarrow \exists z(Ryz \wedge Pz)[x := w]$\\
iff &$\f\models  \; \forall y (Rxy \rightarrow \exists z(Ryz \wedge P^{0}z)[x:= w]$
\end{tabular}
\end{center}
where the predicate $P^{0}z$ can be replaced with $z\neq z$ giving a first-order equivalent formula  $\forall y (Rxy \rightarrow \exists z(Ryz \wedge z\neq z)$ which simplifies  to $\forall y (\neg Rxy)$.
\end{example}

To sum up: although the uniform formulas and their accompanying %minimal
valuations are extremely simple, the key features of our account are already present:

\begin{itemize}
\item
 first, the subclass of  tame valuations is identified, using which the desired first-order correspondent can be effectively computed;
 \item
 second, the order-theoretic properties are highlighted, which guarantee the crucial preservation of equivalence;
 \item
 third, the syntactic specification of the formulas $\phi$ of the given class guarantees that their associated meaning functions $\val{\phi}$ meet the required order-theoretic properties.
\end{itemize}

%the strategy on which all the correspondence results discussed in the paper rest are already present: Firstly, the order theoretic properties of the extension maps $\val{\varphi}$ seen as operations on $\f^+$ %given by the formula
%guarantee the existence of minimal valuations. Secondly, if these minimal valuations can be shown to belong to a nice subclass of $L_0$-definable valuations, the desired first-order correspondent can be effectively computed.

\paragraph{Non-uniform formulas and `minimal valuation' argument.}
\label{sssec:non-uniform}
The discussion above also shows that every uniform formula is locally equivalent on frames to some closed formula (which is obtained by replacing every positive variable with $\bot$ and every negative variable with $\top$). This elimination of variables can in fact be applied not only to uniform formulas but also to formulas that are uniform {\em with respect to some variables}, so as to eliminate those `uniform' variables separately. Therefore, modulo this elimination, in the following subsections we are going to assume w.l.o.g.\ that the formulas we consider are non-uniform in each of their variables. Modulo equivalent rewriting, we can assume w.l.o.g.\ that every such formula is of the form $\varphi\rightarrow\psi$, where $\psi$ is positive, %both $\varphi$ and $\psi$ are positive,
and all the variables occurring in $\psi$ also occur in $\varphi$.
For such formulas, we have:
\begin{center}
\begin{tabular}{r c l}
$\f, w\Vdash \varphi\to \psi$ %& iff & $\f\models \forall P_1\ldots\forall P_n
%ST_x(\varphi)\val{w}$\\
& iff & $w\in \val{\varphi\rightarrow \psi}(V)$ for all $V$ on $\f$\\
%& iff & $w\in (W\setminus \val{\varphi}(V))\cup (V)$ for all $V$ on $\f$.\\
& iff & for all $V$ on $\f$, if $w\in \val{\varphi}(V)$ then $w\in \val{\psi}(V).$\\
\end{tabular}
\end{center}
%This time we are after some conditions on $\varphi$ and $\psi$ that guarantee that, rather than stripping off the quantification on $V$, we can transform it into some intrinsically first-order quantification.

The textbook heuristics for producing the correspondent of formulas of this form is the `minimal valuation' method (see \cite{vB2010} subsection 9.4):  find the (class of) minimal valuation(s) $V^\ast$ on $\f$ such that $w\in \val{\varphi}(V^\ast)$ (and plug their description in the standard translation of the consequent).
The success of this heuristics rests on two conceptually different requirements: %that are independent of one another:
first, that `minimal valuations' exist; second, provided they exists, that they are tame.
The account we will present in the next sections, as described in the discussion at the end of Subsection \ref{subsec:the general reduction strategy}, will deal with these two requirements separately and in the reverse order. Namely, first we will single out subclasses of $L_0$-definable valuations (our target class of `minimal valuations'), restricting the universal quantification to which guarantees that the first-order correspondents can be effectively computed  (essentially by way of the `plug-in' method alluded to above). Second, we will show that certain order-theoretic properties of the extension maps $\val{\varphi}$, seen as operations on $\f^+$, %given by the formula
guarantee that restricting the universal quantification to the target class preserves equivalence (essentially by guaranteeing the existence of a suitable `minimal valuation' taken in the target class). %Second, if these `minimal valuations' can be shown to belong to a nice subclass of $L_0$-definable valuations, the desired first-order correspondent can be effectively computed.
Third, we will show that the syntactic conditions on $\phi$ guarantee that $\val{\phi}$ satisfies the required order-theoretic properties.

\subsection{Very simple Sahlqvist implications}
\paragraph{The reduction strategy.} For any $m\in \mathbb{N}$, let us write $S \subseteq_m W$ if $S\subseteq W$ and $|S|\leq m$. Consider the subclass of the tame valuations which %$V:\Prop\to \pp(W)$ is that which assigns singletons to all propositional variables, i.e., for every $p\in \Prop$,  $V_0(p) =\{ v \}$ for some $v \in W$. This looks hopeful but, in fact, is not quite enough. Rather than just singletons, we will have to allow valuations are those that
assign finite subsets of  bounded size $m$ to propositional variables, i.e.\ valuations $V_1:\Prop\to \pp_{m}(W)$, %such that for each $p\in \Prop$,  $V^m_0(p) = S$ for some $S \subseteq W$ with $|S| \leq m$.
where $$\pp_{m}(W): = \{S\mid S \subseteq_m W\},$$
and suppose the following were equivalent:
\begin{enumerate}
\item  $\forall V(w\in \val{\varphi}(V) \Rightarrow w\in \val{\psi}(V))$
\item  $\forall V_1(w\in \val{\varphi}(V_1) \Rightarrow w\in \val{\psi}(V_1))$.
\end{enumerate}
This would mean that
\begin{center}
\begin{tabular}{c l}
& $\f\models \forall P_1\ldots\forall P_n ST_x(\varphi\rightarrow \psi)[x :=w ]$\\
 iff &
$\f\models \forall P_1^1\ldots\forall P_n^1 ST_x(\varphi\rightarrow
\psi)[x :=w ]$,\\
\end{tabular}
\end{center}
where the variables $P_i^1$ would not range over arbitrary subsets of $W$, but only over those of size at most $m$. Provided the equivalence between 1 and 2 above holds,  we would effectively obtain the local first-order correspondent of $\phi\to \psi$ %to transform the formula on the right-hand side into a first-order formula
by replacing each  $\forall P_i^1$ in the prefix with $\forall z^1_i \forall z^2_i \ldots \forall z^m_i$ and each atomic formula of the form $P_i^1 y$ with
$y= z^1_i \vee y= z^2_i \vee \cdots \vee y= z^m_i$, where all the $z$s are fresh variables.

\paragraph{Order-theoretic conditions.} Let $n\in\mathbb{N}$. For all $n$-tuples $\overline{X}\in \pp(W)^n$ and $\overline{m}\in \mathbb{N}^n$, %s.t.\ $m_i\geq 1$ for every $1\leq i\leq n$,
let
\[
\sigma_{\overline{m}}(\overline{X}): =
\begin{cases}
\{\overline{Z}\mid Z_i\subseteq_{m_i} X_i\} & \mbox{if } X_i\neq \varnothing \mbox{ for each } 1\leq i\leq n\\
\varnothing & \mbox{otherwise} .\end{cases}\]   An operation $f: \pp(W)^n \rightarrow \pp(W)$ is  {\em completely} $\overline{m}$-{\em additive}\footnote{This definition is inspired to \cite[Definition 2.1(iii)]{Henkin}. When $X_i = X_j$ for all $1\leq i, j\leq n$, we will simplify notation and write $\sigma_{\overline{m}}(X)$.} if
for every $\overline{X}\in \pp(W)^n$,
\[f(\overline{X}) = \bigcup\{f(\overline{Z})\mid \overline{Z}\in \sigma_{\overline{m}}(\overline{X})\}.\]
The following lemma is an immediate consequence of the definition:
\begin{lemma}\label{Op:Composition:Props:Lemma}
If $f$ is $\overline{m}$-additive, then
\begin{enumerate}
\item $f$ is order-preserving.
\item  $f(\overline{X}) = \varnothing$ if $X_i = \varnothing$ for some $i$.
\end{enumerate}
\end{lemma}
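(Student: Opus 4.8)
The plan is to derive both items directly from the defining identity $f(\overline{X}) = \bigcup\{f(\overline{Z}) \mid \overline{Z}\in \sigma_{\overline{m}}(\overline{X})\}$, with everything resting on the elementary behaviour of the selection operator $\sigma_{\overline{m}}$. The only genuine case distinction comes from the two-branch definition of $\sigma_{\overline{m}}$, according to whether or not some coordinate of the input tuple is empty; once this is handled, both claims reduce to set-theoretic bookkeeping.

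For item (2), I would simply note that if $X_i = \varnothing$ for some $i$, then by definition $\sigma_{\overline{m}}(\overline{X}) = \varnothing$, i.e.\ the \emph{empty family} of tuples. Hence the right-hand side of the additivity identity is a union indexed over the empty set, which is $\varnothing$, giving $f(\overline{X}) = \varnothing$.

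For item (1), the goal is to show $f(\overline{X})\subseteq f(\overline{Y})$ whenever $X_i\subseteq Y_i$ for every $i$. I would split into two cases. If some $X_i=\varnothing$, then $f(\overline{X})=\varnothing$ by item (2), and the inclusion is trivial. Otherwise all $X_i\neq\varnothing$, and since $X_i\subseteq Y_i$ forces $Y_i\neq\varnothing$ as well, both tuples fall in the first branch of $\sigma_{\overline{m}}$. The crux is then the monotonicity of $\sigma_{\overline{m}}$ on this branch: any $\overline{Z}$ with $Z_i\subseteq_{m_i} X_i$ automatically satisfies $Z_i\subseteq_{m_i} Y_i$, because $Z_i\subseteq X_i\subseteq Y_i$ preserves the inclusion while the cardinality bound $|Z_i|\leq m_i$ is untouched. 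Thus $\sigma_{\overline{m}}(\overline{X})\subseteq\sigma_{\overline{m}}(\overline{Y})$, and taking the union of $f$ over the smaller family yields $f(\overline{X})\subseteq f(\overline{Y})$.

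There is really no serious obstacle here: the statement is, as the text remarks, an immediate consequence of the definition, and the only point requiring a moment's care is to keep the empty-coordinate convention consistent so that the degenerate branch of $\sigma_{\overline{m}}$ is treated uniformly in both items. Indeed, item (2) is precisely what makes the empty-coordinate subcase of item (1) go through, so it is natural to prove (2) first and invoke it inside (1).
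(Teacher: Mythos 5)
Your proof is correct and is exactly the argument the paper leaves implicit: the lemma is stated there as ``an immediate consequence of the definition'' with no written proof, and your case split on the two branches of $\sigma_{\overline{m}}$, proving item (2) first and using it for the empty-coordinate subcase of item (1), is precisely the intended verification.
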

Particularly important is the special case of $\overline{1}$-additive maps. Indeed:
\begin{lemma}
The following are equivalent for any $f: \pp(W)^n \rightarrow \pp(W)$:
\begin{enumerate}
\item $f$ is $\overline{1}$-additive;
\item $f$ is a {\em complete operator}, that is, for every $1\leq i \leq n$,
every  $\x\subseteq \pp(W)$, and all $X_1,\ldots, X_{i-1},  X_{i+1}, \ldots, X_n\in \p(W)$, %\marginpar{\tiny I changed this}
\begin{equation}
\label{eq:completely join pres}
\begin{tabular}{c l}
& $f(X_1,\ldots, X_{i-1}, \bigcup\x, X_{i+1}, \ldots, X_n )$\\
= &  $\bigcup_{Y\in
\x} f(X_1,\ldots, X_{i-1}, Y, X_{i+1}, \ldots, X_n )$.\\
\end{tabular}
\end{equation}
\end{enumerate}
\end{lemma}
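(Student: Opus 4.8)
The plan is to prove the two implications separately, in both cases reducing everything to the behaviour of $f$ on tuples of singletons and keeping careful track of which coordinates are assigned $\varnothing$. Throughout I would freely invoke Lemma \ref{Op:Composition:Props:Lemma}, which tells me that an $\overline{1}$-additive $f$ is order-preserving and sends any tuple with an empty coordinate to $\varnothing$.

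For the direction $(2 \Rightarrow 1)$, I would start from a complete operator $f$ and an arbitrary tuple $\overline{X} = (X_1, \ldots, X_n)$. Applying (\ref{eq:completely join pres}) with the empty family $\x = \varnothing$ in one coordinate immediately yields $f(X_1, \ldots, \varnothing, \ldots, X_n) = \varnothing$, matching the $\varnothing$-clause in the definition of $\sigma_{\overline{1}}$. When all $X_i$ are nonempty, I would write each $X_i = \bigcup_{x \in X_i}\{x\}$ and apply (\ref{eq:completely join pres}) once per coordinate to obtain $f(\overline{X}) = \bigcup\{ f(\{x_1\}, \ldots, \{x_n\}) \mid x_i \in X_i \}$. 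Since every $\overline{Z} \in \sigma_{\overline{1}}(\overline{X})$ with a genuinely empty coordinate contributes $\varnothing$ (again by the empty-family instance of (\ref{eq:completely join pres})), the union over all of $\sigma_{\overline{1}}(\overline{X})$ collapses to exactly this union over all-singleton tuples, which is the required $\overline{1}$-additivity identity.

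For the direction $(1 \Rightarrow 2)$ I would fix a coordinate $i$, side arguments $X_j$ for $j \neq i$, and a family $\x \subseteq \p(W)$, and set $U = \bigcup \x$. The inclusion $\bigcup_{Y \in \x} f(\ldots, Y, \ldots) \subseteq f(\ldots, U, \ldots)$ is immediate from order-preservation, since $Y \subseteq U$. For the reverse inclusion I would expand $f(\ldots, U, \ldots)$ by $\overline{1}$-additivity as the union of $f(\overline{Z})$ over $\overline{Z} \in \sigma_{\overline{1}}(\ldots, U, \ldots)$: any $\overline{Z}$ with an empty coordinate contributes $\varnothing$, while a surviving $\overline{Z}$ has $Z_i = \{u\}$ for some $u \in U$, hence $u \in Y$ for some $Y \in \x$, so $Z_i \subseteq Y$ and $Z_j \subseteq X_j$; monotonicity then gives $f(\overline{Z}) \subseteq f(\ldots, Y, \ldots)$, so the whole union is absorbed into the right-hand side. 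The degenerate cases ($\x = \varnothing$, or $U = \varnothing$, or some $X_j = \varnothing$) I would dispatch directly using clause 2 of Lemma \ref{Op:Composition:Props:Lemma}.

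The step I expect to require the most care is not any single algebraic manipulation but the empty-set bookkeeping that pervades both directions: the definition of $\sigma_{\overline{m}}$ treats an empty coordinate discontinuously, so I must repeatedly check that tuples $\overline{Z}$ containing $\varnothing$ contribute nothing and that the ``collapse to singletons'' is valid precisely because $f$ annihilates empty coordinates. Once this is handled uniformly via Lemma \ref{Op:Composition:Props:Lemma}, the core of $(1 \Rightarrow 2)$ --- rebundling each singleton $\{u\} \subseteq U$ into some $Y \in \x$ containing $u$ and reabsorbing it by monotonicity --- is short.
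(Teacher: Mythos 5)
Your proof is correct and follows essentially the same route as the paper's: expand $f$ on a tuple of unions into a union over singleton tuples via $\overline{1}$-additivity, then rebundle using monotonicity and the fact that empty coordinates are annihilated. The only difference is that you explicitly work through the degenerate cases (empty families, empty coordinates) that the paper dispatches with ``the remaining cases are left to the reader,'' which is a welcome but not substantive addition.
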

\begin{proof}
Complete operators are clearly $\overline{1}$-additive. Conversely, for every  $\overline{1}$-additive map $f$ and all  $\x$ and  $X_1,\ldots, X_{i-1},  X_{i+1}, \ldots, X_n$ as above such that $\bigcup \x\neq \varnothing\neq X_j$ for all $1\leq j\neq i\leq n$,

\begin{center}
\begin{tabular}{r c l}
$f(X_1,\ldots, X_{i-1}, \bigcup\x, X_{i+1}, \ldots, X_n )$ & = & $\bigcup\{f(\{x_1\}, \ldots,\{y\}, \ldots \{x_n\})\mid x_j\in X_j$ and  $y\in \bigcup\x\}$\\
& = & $\bigcup\{f(X_1, \ldots,Y, \ldots X_n)\mid   Y\in \x\}$.\\
\end{tabular}
\end{center}
The remaining cases are left to the reader.
\end{proof}
Consider the following conditions on $\phi$:
\begin{enumerate}
\item[(a)]$\phi(p_1, \ldots, p_n)= \phi'(p_1, \ldots, p_n, \gamma_1, \ldots, \gamma_{\ell})$, where there are $m_i$ occurrences of $p_i$ in $\phi'$ for each $1\leq i\leq n$;
\item[(b)] for any $\overline{A}\in \pp(W)^\ell$, the extension map $\val{\phi'(p_1, \ldots, p_n, \overline{A})}$ is an $\overline{m}$-additive operation on $\pp(W)$, where each coordinate in $\overline{m}$ is defined as in item (a);
\item[(c)] $\val{\gamma_1}$ to $\val{\gamma_{\ell}}$ are order-reversing in each coordinate.
\end{enumerate}

For every frame $\f$ and every $m\in \mathbb{N}$, let $\mathsf{Val}_1(\f)$ be the set of valuations on $\f$ of type $V_1: \mathsf{Prop}\to\p_m(W)$.
\begin{prop}
\label{prop:very simple}
Let $\varphi\to \psi\in \ML$ be such that $\phi$ verifies the conditions (a)-(c) above and $\val{\psi}$ is order-preserving. Let $m = max_{i=1}^n m_i$. %where $m_i$ is the degree of $\val{\phi}$ relative to its $i$th coordinate.
Then the
following are equivalent for every frame $\f$:
\begin{enumerate}
\item  $(\forall V\in \mathsf{Val}(\f))[w\in \val{\varphi}(V) \Rightarrow w\in \val{\psi}(V)]$
\item  $(\forall V_1\in \mathsf{Val}_{1}(\f))[w\in \val{\varphi}(V_1) \Rightarrow w\in \val{\psi}(V_1)].$
\end{enumerate}
\end{prop}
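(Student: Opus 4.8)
The plan is to prove the two implications separately, with the forward implication $(1)\Rightarrow(2)$ being immediate and the converse $(2)\Rightarrow(1)$ carrying all the weight via a minimal-valuation construction. Since $\mathsf{Val}_1(\f)\subseteq \mathsf{Val}(\f)$, restricting the universal quantifier in (1) to the smaller class yields (2) at once, so I would dispatch this direction in one line and concentrate on $(2)\Rightarrow(1)$.

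For $(2)\Rightarrow(1)$ I would argue directly: assume (2), fix an arbitrary $V\in\mathsf{Val}(\f)$ with $w\in\val{\varphi}(V)$, and aim to produce a valuation $V_1\in\mathsf{Val}_1(\f)$ with $V_1\leq V$ pointwise (i.e.\ $V_1(p)\subseteq V(p)$ for every $p$) such that $w\in\val{\varphi}(V_1)$. The construction exploits the decomposition $\varphi=\varphi'(p_1,\ldots,p_n,\gamma_1,\ldots,\gamma_\ell)$, so that $\val{\varphi}(V)=\val{\varphi'}(V(p_1),\ldots,V(p_n),\overline{A})$ where $\overline{A}=(\val{\gamma_1}(V),\ldots,\val{\gamma_\ell}(V))$. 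First I would note that $V(p_i)\neq\varnothing$ for each head variable, since otherwise Lemma \ref{Op:Composition:Props:Lemma}(2) forces $\val{\varphi}(V)=\varnothing$, contradicting $w\in\val{\varphi}(V)$. Then, invoking the $\overline{m}$-additivity of $\val{\varphi'(p_1,\ldots,p_n,\overline{A})}$ from condition (b), membership of $w$ in $\val{\varphi'}(V(p_1),\ldots,V(p_n),\overline{A})=\bigcup\{\val{\varphi'}(\overline{Z},\overline{A})\mid \overline{Z}\in\sigma_{\overline{m}}(V(p_1),\ldots,V(p_n))\}$ yields subsets $Z_i\subseteq_{m_i}V(p_i)$ with $w\in\val{\varphi'}(Z_1,\ldots,Z_n,\overline{A})$. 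I would then set $V_1(p_i):=Z_i$ for the head variables and $V_1(q):=\varnothing$ for every other variable. By construction $|V_1(p_i)|\leq m_i\leq m$ and $V_1(q)=\varnothing$ elsewhere, so $V_1\in\mathsf{Val}_1(\f)$; and $V_1\leq V$ pointwise since $Z_i\subseteq V(p_i)$ and $\varnothing\subseteq V(q)$.

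The heart of the argument is verifying $w\in\val{\varphi}(V_1)$. Here I would combine two monotonicity facts: since each $\gamma_j$ is order-reversing (condition (c)) and $V_1\leq V$, we have $\val{\gamma_j}(V)\subseteq\val{\gamma_j}(V_1)$; and since $\varphi'$ is positive in all of its argument slots, $\val{\varphi'}$ is order-preserving in every coordinate (by Proposition \ref{orderpreserving operations}, consistently with Lemma \ref{Op:Composition:Props:Lemma}(1) in the head coordinates). Chaining these gives $w\in\val{\varphi'}(Z_1,\ldots,Z_n,\val{\gamma_1}(V),\ldots,\val{\gamma_\ell}(V))\subseteq\val{\varphi'}(Z_1,\ldots,Z_n,\val{\gamma_1}(V_1),\ldots,\val{\gamma_\ell}(V_1))=\val{\varphi}(V_1)$. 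Applying hypothesis (2) to $V_1$ then yields $w\in\val{\psi}(V_1)$, and finally, since $\val{\psi}$ is order-preserving and $V_1\leq V$, I conclude $w\in\val{\psi}(V_1)\subseteq\val{\psi}(V)$, as required.

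I expect the main obstacle to be the bookkeeping around variables that occur both as heads (positively, in $\varphi'$) and inside the negative formulas $\gamma_j$. The construction sidesteps any apparent conflict by defining $V_1$ uniformly as a single valuation lying pointwise below $V$: the only properties actually used are $V_1\leq V$ together with the monotonicity and antitonicity of the relevant extension maps, so the simultaneous positive and negative occurrences of a shared variable cause no trouble. The one genuinely delicate point is confirming order-preservation of $\val{\varphi'}$ in the $\gamma$-coordinates, which condition (b) guarantees only in the head coordinates; this I would secure separately from the positivity of $\varphi'$ in its placeholder slots.
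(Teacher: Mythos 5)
Your proof is correct and follows essentially the same route as the paper's: the same non-emptiness observation via Lemma \ref{Op:Composition:Props:Lemma}(2), the same use of $\overline{m}$-additivity to extract the sets $Z_i\subseteq_{m_i}V(p_i)$, the same definition of the minimal valuation $V_1$, and the same monotonicity/antitonicity chain to transfer membership from $V$ to $V_1$ and back. Your explicit remark that order-preservation of $\val{\varphi'}$ in the $\gamma$-coordinates must be secured from the positivity of $\varphi'$ in those slots (rather than from condition (b), which only covers the head coordinates) is a small point the paper glosses over, and is worth keeping.
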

\begin{proof}
(1 $\Rightarrow$ 2) Clear. (2 $\Rightarrow$ 1) %Assume (see Subsubsection \ref{sssec:non-uniform}) that all  $\psi$ also occur in.
%
%By Proposition \ref{very simple->complete operator} and \ref{orderpreserving operations},$\val{\phi}$ and $\val{\psi}$ satisfy the assumptions of Proposition \ref{diamond<pr}.  In particular, $\val{\phi}$ can be regarded as an $n$-ary operation on $\p(W)$, which is a composition of operators. For every $i=1,\ldots, n$, let $n_i$ be the degree of $\val{\phi}$ relative to its $i$th coordinate and $n$ be the degree of  $\val{\phi}$.
Let  $V\in \mathsf{Val}(\f)$ and  $w\in W$ s.t.\ $w\in \val{\varphi}(V)$. Hence, $$\emptyset
\neq \val{\varphi}(V) = \val{\varphi'}(V(p_1),\ldots V(p_n), \val{\gamma_1}(V), \ldots, \val{\gamma_{\ell}}(V)).$$ By
Lemma \ref{Op:Composition:Props:Lemma}(2), this implies that $V(p_i)\neq
\emptyset$ for every $1\leq i \leq n$. %Since for every $i$, % for every set $X$, $X= \bigcup_{x\in X}\{x\}$, then
%$$V(p_i)= \bigcup\p_{n_i}(V(p_i)),$$
By assumption (b),
%\begin{center}
$$\val{\varphi}(V)= \bigcup\{\val{\varphi'}(S_1, \ldots,
S_n, \val{\gamma_1}(V), \ldots, \val{\gamma_{\ell}}(V))\mid S_i\subseteq_{m_i}V(p_i),\  1\leq i\leq n\}.$$
%\end{center}
Hence, $w\in \val{\varphi}(V)$ implies that
$w\in \val{\varphi'}(T_1, \ldots,
T_n, \val{\gamma_1}(V), \ldots, \val{\gamma_{\ell}}(V))$ for some $ T_i\subseteq_{m_i}V(p_i),$ $1\leq i \leq n$.
Let $V_1$ be the valuation that maps any $q\in \Prop\setminus\{p_i\mid 1\leq i\leq n\}$ to $\emptyset$ and
$p_i$ to $T_i$ for each $1\leq i\leq n$. Clearly, $V_1\in \mathsf{Val}_1(\f)$.  Moreover, $w\in
\val{\varphi}(V_1)$. Indeed,
\begin{center}
\begin{tabular}{r c l}
$w$ & $\in$ & $\val{\varphi'}(T_1, \ldots,
T_n, \val{\gamma_1}(V), \ldots, \val{\gamma_{\ell}}(V))$\\
& $\subseteq$ & $\val{\varphi'}(T_1, \ldots,
T_n, \val{\gamma_1}(V_1), \ldots, \val{\gamma_{\ell}}(V_1))$\\
& $=$ & $\val{\varphi'}(V_1(p_1), \ldots,
V_1(p_n), \val{\gamma_1}(V_1), \ldots, \val{\gamma_{\ell}}(V_1))$\\
& $ = $ & $\val{\varphi}(V_1).$\\
\end{tabular}
\end{center}
The inclusion in the chain above holds since $V_1(p)\subseteq V(p)$ for every $p\in \mathsf{Prop}$ and the extensions of the $\gamma$s are $\subseteq$-reversing by assumption (c). Hence, by assumption (2), $w\in \val{\psi}(V_1)$.
Since $\val{\psi}$ is order-preserving in every coordinate, and again $V_1(p)\subseteq V(p)$ for every $p\in \mathsf{Prop}$,
%$V_1(p_i) = T_i\subseteq V(p_i)$ for every $1\leq i\leq n$,
%3) all the variables occurring in $\psi$ also occur in $\varphi$;\\
we get $w\in \val{\psi}(V_1)\subseteq \val{\psi}(V)$, which concludes the
proof.
\end{proof}

%\end{proof}
\commment{
Composition of complete operators will be important for our account: in order to describe their order-theoretic properties, the following definition will be useful.
\begin{definition}
%\footnote{\texttt{If we were working with formulas/terms we could formulate a version which would be a function both of the formula and a given variable and which would take into account how many times and where that variable occurs. This would result in a closer match with the usual minimal valuation for very simple Sahlqvist formulas. But I don't see how to do this is we work purely order theoretically.}}
Let $g: \pp(W)^n \rightarrow \pp(W)$ be a composition of complete operators.
\begin{enumerate}
\item The \emph{degree of $g$ in the $i$th coordinate}, notation $\delta^i_g$, is defined by induction on $g$:
\begin{enumerate}
\item If $g$ is itself a complete operator, then $\delta^i_g = 1$ for every coordinate  $1\leq i\leq n$ whose corresponding variable  occurs in $g$, and $\delta^i_g = 0$ otherwise;
\item If $g = f(h_1, h_2, \ldots, h_m)$ for some complete operator $f$ and compositions of complete operators, $h_1, \ldots, h_m$, then $\delta^i_g = \delta^i_{h_1} +  \cdots + \delta^i_{h_m}$.
\end{enumerate}
\item The \emph{degree of $g$}, notation $\delta_g$, is  $max\{\delta^i_g\mid 1\leq i\leq n\}$.
\end{enumerate}
\end{definition}

%The following two lemmas give some of their properties. The next lemma may be proven by an easy induction.
%
\begin{lemma}\label{Op:Composition:Props:Lemma}
If $g: \pp(W)^n \rightarrow \pp(W)$ is a composition of complete operators, then
\begin{enumerate}
\item $g$ is order-preserving in each coordinate, and
\item for all $X_1,\ldots X_n\in \pp(W)$, if $X_i = \emptyset$ for some $1 \leq i \leq n$ whose corresponding variable occurs in $g$, then $g(X_1,\ldots, X_n)= \emptyset$.
\end{enumerate}
\end{lemma}
\begin{proof}
1. Every complete operator is order-preserving and the composition of order-preserving maps is order-preserving.\\
2. By induction on $\delta_g$.
\end{proof}

The composition of {\em unary} complete operators yields complete operators, but that this is not generally the case for non-unary complete operators:
%\marginpar{\tiny Needs to be made ``purely algebraic''...}
}
\commment{
\begin{example}\marginnote{we need to modify and find a good place for this example, or remove it altogether}
Consider the extension map $\val{\phi}$ for the very simple Sahlqvist antecedent $\phi(p) = \Diamond p \wedge \Diamond \Diamond p$, defined on the complex algebra of  the frame $\f = (W,R)$ with $W = \{w, v ,u \}$ and $R = \{ (w,v), (v, u)\}$. Then,
\begin{center}
\begin{tabular}{r c l}
$\phi(\{ v\} \cup \{ u \})$ &= &$m_R(\{v, u\}) \cap m_R(m_R(\{v, u\}))$\\
&= &$\{w, v \} \cap \{ w \}$\\
&= &$\{ w \}.$\\
%\end{tabular}
%\end{center}
%
%However
%
%\begin{center}
%\begin{tabular}{r c l}
$\phi(\{ v \}) \cup \phi(\{ u \})$
&= &$(m_R(\{v\}) \cap m_R(m_R(\{v \}))) \cup m_R(\{u\}) \cap m_R(m_R(\{u\}))$\\
&= &$(\{ w \} \cap \emptyset) \cup (\{ v \} \cap \{ w \})$\\
&= &$\emptyset \cup \emptyset = \emptyset.$\\
\end{tabular}
\end{center}
\end{example}

However, compositions of complete operators do retain a certain semblance of the join-preservation of the operators from which they are built, as the next lemma %\ref{Dist:Prop:Composition:Lemma}
shows.
We will write $Y \subseteq_k X$ or $Y \in \pp_k(X)$ to indicate that $Y \subseteq X$ and $|Y| \leq k$, for $k \in \mathbb{N}$.
\begin{lemma}\label{Dist:Prop:Composition:Lemma}
If $g: \pp(W)^n \rightarrow \pp(W)$ is a composition of complete operators, and $X_1, \ldots, X_n \in \pp(W)$, then
\[
g(X_1, \ldots, X_n) = \bigcup \{g(S_1, \ldots, S_n) \mid S_i \subseteq_{\delta^i_g} X_i,\  1 \leq i \leq n \}.
\]
\end{lemma}
\begin{proof}
By induction on the degree of $g$. %The base case is when $g$ is a complete operator $f$ and hence
If $\delta_g = 1$, then $g$ is a complete operator $f$ and hence
\begin{eqnarray*}
f(X_1, \ldots, X_n) &= & f(\bigcup_{x_1 \in X_1} \{x_1 \}, \ldots, \bigcup_{x_n \in X_n} \{x_n \})\\
%
%&= & \bigcup_{x_1 \in X_1} \cdots \bigcup_{x_n \in X_n} f(\{x_1 \}, \ldots, \{x_n \})\\
%
%&= & \bigcup \{f(S_1, \ldots, S_n) \mid S_i \subseteq^{1} X_i,  1 \leq i \leq n \}.
%
&= & \bigcup \{f(\{x_1 \}, \ldots, \{x_n \}) \mid \{x_i\} \subseteq_{1} X_i,\  1 \leq i \leq n \}.
\end{eqnarray*}
If $\delta_g>1$, then $g$ is of the form $f(h_1, \ldots, h_m)$ for $m>1$, where $f$ is a complete operator and each $h_i$ is a composition of complete operators. Then
\begin{eqnarray*}
g(X_1, \ldots, X_n) &= &f(h_1(X_1, \ldots, X_n), \ldots, h_m(X_1, \ldots, X_n))\\
&=&f( \bigcup\{h_i(S^i_1, \ldots, S^i_n) \mid  S^{i}_{j} \subseteq_{\delta^j_{h_i}} X_j,\ 1 \leq j \leq n \})_{i=1}^{m}\\
&=&\bigcup\{f(h_i(S^i_1, \ldots, S^i_n))_{i=1}^{m} \mid  S^{i}_{j} \subseteq_{\delta^j_{h_i}} X_j,\ 1 \leq j \leq n \}\\
&\subseteq &\bigcup\{f(h_i(S_1, \ldots, S_n))_{i=1}^{m} \mid  S_{j} \subseteq_{\delta^j_{h_1} + \cdots + \delta^j_{h_m}} X_j,\ 1 \leq j \leq n \}\\
&=&\bigcup\{g(S_1, \ldots, S_n) \mid  S_{j} \subseteq_{\delta^j_g} X_j,\ 1 \leq j \leq n\}.
\end{eqnarray*}
Here the second equality holds by the inductive hypothesis, and the third since $f$ is a complete operator. The inclusion holds since the set of which %\marginpar{\tiny Do you agree with this change?}
the union is taken  in the third line is a subset of the corresponding set in the fourth line. % set $f$ and the $h_i$'s are order preserving.
The last equality holds by the assumptions on $g$ and by definition of $\delta_g$.

\noindent The converse inclusion follows from $g$ being order-preserving (Lemma \ref{Op:Composition:Props:Lemma}).
%But now, since $g$ is order preserving, we have, for all $S_j \subseteq X_j$, $1 \leq j \leq n$, that $g(S_1, \ldots, S_n) \subseteq g(X_1, \ldots, X_n)$, and hence that $\bigcup\{g(S_1, \ldots, S_n) \mid  S_{j} \subseteq^{\Delta(g)} X_j, 1 \leq j \leq n\} \subseteq g(X_1, \ldots, X_n)$. The desired equality follows.
\end{proof}
}

\commment{
\begin{prop}\label{diamond<pr}
If $g : \pp(W)^n \rightarrow \pp(W)$ is a composition of complete operators and $f : \pp(W)^n \rightarrow \pp(W)$ is order-preserving in each coordinate, then the following are equivalent:
\begin{enumerate}
\item  $\forall X_1, \ldots, X_n \in \pp(W) [w\in g(X_1, \ldots, X_n) \Rightarrow w \in f(X_1, \ldots, X_n)]$
\item  $\forall X_1, \ldots, X_n \in \pp_{\delta_{g}}(W) [w\in g(X_1, \ldots, X_n) \Rightarrow w \in f(X_1, \ldots, X_n)].$
\end{enumerate}
\end{prop}
\begin{proof}
(1 $\Rightarrow$ 2) Clear. (2 $\Rightarrow$ 1) Let us fix $X_1, \ldots, X_n \in \pp(W)$ and  $w\in g(X_1, \ldots, X_n)$. Hence $\emptyset \neq g(X_1, \ldots, X_n)$, and by lemma \ref{Op:Composition:Props:Lemma}(2), this implies that $X_i \neq
\emptyset$ for every $i = 1,\ldots, n$. By lemma \ref{Dist:Prop:Composition:Lemma},
\[
f_1(X_1, \ldots, X_n) = \bigcup \{g(S_1, \ldots, S_n) \mid S_i \subseteq_{\delta_g} X_i,  1 \leq i \leq n \}.
\]
Hence, $w\in g(X_1, \ldots, X_n)$ implies that $w \in g(S_1, \ldots, S_n)$ for some $S_i \subseteq_{\delta_g} X_i$
$i=1\ldots n$. Then by assumption (2), $w \in f(S_1, \ldots, S_n)$. Since:\\
1. $f$ is order-preserving in every coordinate, and\\
2. $S_i \subseteq X_i$ for every $i=1\ldots n$,\\
we get that $w \in f(X_1, \ldots, X_n)$, which concludes the proof.
\end{proof}}

\paragraph{Syntactic conditions.} It remains to show that the very simple Sahlqvist implications verify the assumptions of Proposition \ref{prop:very simple}. The assumptions on $\psi$ and the $\gamma$s are verified because of Proposition \ref{orderpreserving operations}. As to the assumptions on $\phi'$: %The following remark tells us that that it is safe to assume that, in our very simple Sahlqvist formulas $\phi \rightarrow \psi$, every variable occurring in $\psi$ also occurs in $\phi$.
\begin{lemma}
\label{very simple->complete operator}
If $\phi'(p_1,\ldots, p_n, \overline{c})$ is a formula built up from  variables $\overline{p}$ and parameters $\overline{c}$ using $\Diamond$ and $\wedge$, then $\val{\phi'}: \pp(W)^n\to \pp(W)$ is an $\overline{m}$-additive map, where  $m_i$ is the number of occurrences of $p_i$ in $\phi'$  for each $1\leq i\leq n$.
\end{lemma}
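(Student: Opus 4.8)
The plan is to prove the claim by a structural induction on $\phi'$, after first splitting the defining equation of $\overline{m}$-additivity into its two cases. Recall that $\val{\phi'}$ is $\overline{m}$-additive exactly when $\val{\phi'}(\overline X) = \bigcup\{\val{\phi'}(\overline Z)\mid \overline Z\in\sigma_{\overline{m}}(\overline X)\}$ for all $\overline X$; by the definition of $\sigma_{\overline{m}}$ this decomposes into two separate statements, namely (i) $\val{\phi'}(\overline X)=\emptyset$ whenever some $X_i=\emptyset$, and (ii) $\val{\phi'}(\overline X)=\bigcup\{\val{\phi'}(\overline Z)\mid Z_i\subseteq_{m_i}X_i,\ 1\le i\le n\}$ whenever every $X_i\neq\emptyset$. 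I would establish (i) and (ii) in turn, relying throughout on the standing assumption that each $p_i$ actually occurs in $\phi'$ (so $m_i\ge 1$).

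Property (i) I would prove by a direct induction on $\phi'$, using that $m_R$ and $\cap$ are \emph{normal}, i.e.\ send $\emptyset$ (respectively an empty conjunct) to $\emptyset$: if $X_i=\emptyset$, then, since $p_i$ occurs, the empty value propagates through every surrounding diamond and through at least one conjunct at each $\wedge$, forcing $\val{\phi'}(\overline X)=\emptyset$. The fixed parameters $\overline c$ contribute no variable and so are irrelevant to this step.

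For (ii), fix $\overline X$ with all $X_i\neq\emptyset$. The inclusion $\supseteq$ is immediate from monotonicity: as a function of $\overline p$ with the parameters held fixed, $\phi'$ is positive in each variable, so $\val{\phi'}$ is order-preserving (by Proposition \ref{orderpreserving operations}, or directly from the monotonicity of $\cap$ and $m_R$), whence each $\val{\phi'}(\overline Z)$ with $Z_i\subseteq X_i$ is contained in $\val{\phi'}(\overline X)$. The real content is the converse inclusion $\subseteq$, which I would prove as the standalone inductive claim that every $w\in\val{\phi'}(\overline X)$ lies in $\val{\phi'}(\overline Z)$ for some $\overline Z$ with $Z_i\subseteq_{m_i}X_i$. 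The base cases are a variable $p_i$ (take $Z_i=\{w\}$, of size $1=m_i$, and $Z_j=\emptyset$ for $j\neq i$) and a parameter (take $\overline Z=\overline{\emptyset}$, which is legitimate since all $m_i=0$ there). For $\phi'=\Diamond\chi$ the claim transfers from $\chi$ almost verbatim, since passing through $m_R$ only replaces the witness point by an $R$-successor and leaves every occurrence count unchanged.

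The crucial, and to my mind only nontrivial, step is the conjunction case $\phi'=\chi_1\wedge\chi_2$. Here $w$ lies in both $\val{\chi_1}(\overline X)$ and $\val{\chi_2}(\overline X)$, so the inductive hypothesis supplies witnesses $\overline{Z^1}$ and $\overline{Z^2}$ with $Z^1_i\subseteq_{m^1_i}X_i$ and $Z^2_i\subseteq_{m^2_i}X_i$, where $m^1_i,m^2_i$ count the occurrences of $p_i$ in $\chi_1,\chi_2$. These two witnesses need not coincide, so I would merge them by setting $Z_i:=Z^1_i\cup Z^2_i$; then $Z_i\subseteq X_i$ and $|Z_i|\le m^1_i+m^2_i=m_i$, while monotonicity gives $w\in\val{\chi_1}(\overline Z)\cap\val{\chi_2}(\overline Z)=\val{\phi'}(\overline Z)$. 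This merging is precisely where the bound grows additively in the number of occurrences, and it is the heart of why the correct parameter is the occurrence count $m_i$ rather than $1$; it mirrors the earlier observation (cf.\ $\Diamond p\wedge\Diamond\Diamond p$) that compositions of complete operators fail to be $\overline{1}$-additive. Combining (i) with the two inclusions of (ii) yields the $\overline{m}$-additivity of $\val{\phi'}$.
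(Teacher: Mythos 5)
Your proof is correct and follows essentially the same route as the paper's: a structural induction on $\phi'$ whose only substantive case is conjunction, resolved by merging the witness sets for the two conjuncts via union so that the size bound grows additively ($\ell+k=m_i$), which is exactly the regrouping step in the paper's chain of set identities. The only differences are presentational: you argue element-wise and isolate the normality clause (i) explicitly (a slight gain in rigour at the $X_i=\emptyset$ edge case), whereas the paper packages everything into one equational computation.
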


\begin{proof}%\marginnote{slightly edited. You ok with the changes?}
The proof is by induction on $\phi'$. The base cases for $\phi'$ a propositional variable $p_i$ or a parameter $c$ are immediate. The inductive step for $\phi'$ of the form $\Diamond \psi$ is straightforward, so we only consider the case for $\phi'$ of the form $\psi \wedge \theta$. Without loss of generality, we can restrict our attention to the $i$th variable. By assumption $p_i$ occurs $m_i$ times in $\psi \wedge \theta$, so there are $\ell, k \in \mathbb{N}$ such that $\ell + k = m_i$ and $p_i$ occurs $\ell$ times in $\psi$ and $k$ times in $\theta$.
\begin{eqnarray*}
\val{\phi'}(X) &= &\val{\psi}(X) \cap \val{\theta}(X)\\
&= &\bigcup \{ \val{\psi}(X') \mid X' \subseteq_{\ell} X\} \cap \bigcup \{ \val{\theta}(X'') \mid X'' \subseteq_{k} X\}\\
&= &\bigcup \{ \val{\psi}(X') \cap \val{\theta}(X'') \mid X' \subseteq_{\ell} X,  X'' \subseteq_{k} X\}\\
&= &\bigcup \left\{\left. \bigcup \{ \val{\psi}(X') \cap \val{\theta}(X'') \mid X' \subseteq_{\ell} X''',  X'' \subseteq_{k} X'''\} \right| X''' \subseteq_{m_i} X \right\}\\
&= &\bigcup \left\{\left. \bigcup \{ \val{\psi}(X') \mid X' \subseteq_{\ell} X''' \} \cap \bigcup \{\val{\theta}(X'') \mid  X'' \subseteq_{k} X'''\} \right| X''' \subseteq_{m_i} X \right\}\\
&= &\bigcup \left\{\left. \val{\psi}(X''') \cap \val{\theta}(X''') \right| X''' \subseteq_{m_i} X \right\}\\
&= &\bigcup \left\{\left. \val{\phi'}(X''') \right| X''' \subseteq_{m_i} X \right\}.
\end{eqnarray*}
\end{proof}
\begin{cor}
\label{cor:very simple->complete operator} If $\varphi = \varphi(p_1,\ldots, p_n)$ is a very simple Sahlqvist antecedent then it verifies the assumptions (a)-(c) of Proposition \ref{prop:very simple}. In particular, the maps $\val{\gamma}$s are exactly the ones induced by the negative formulas occurring in the construction of $\phi$, the map $\val{\varphi'}$ is induced by the compound occurrences of $\wedge$ and $\Diamond$, and  $m_i$ is the number of positive  occurrences of $p_i$ in $\varphi$ for every $1\leq i\leq n$.
\end{cor}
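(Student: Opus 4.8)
The plan is to extract the decomposition required by (a) directly from the inductive generation of $\varphi$ as a very simple Sahlqvist antecedent, and then to discharge condition (b) by Lemma~\ref{very simple->complete operator} and condition (c) by Proposition~\ref{orderpreserving operations}.

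First I would fix the decomposition. Parsing $\varphi$ according to its construction from $\top$, $\bot$, negative formulas and proposition letters by $\vee$, $\wedge$ and $\Diamond$, I designate as $\gamma_1,\ldots,\gamma_\ell$ the maximal negative subformulas used as building blocks (counting the constants $\top$ and $\bot$ among them, as vacuously negative formulas), and I let $\varphi'$ be the skeleton obtained by replacing each such $\gamma_j$ with a fresh parameter. Then $\varphi = \varphi'(p_1,\ldots,p_n,\gamma_1,\ldots,\gamma_\ell)$ with $\varphi'$ built from the proposition letters and the parameters using only $\wedge$, $\Diamond$ (and, in the non-definite case, $\vee$), which is the shape demanded by (a). The occurrences of $p_i$ in $\varphi'$ are precisely those occurrences of $p_i$ in $\varphi$ that lie outside every $\gamma_j$; since the skeleton introduces no negation, these are positive in $\varphi$, whereas every occurrence of $p_i$ inside some $\gamma_j$ is negative in $\varphi$ because $\gamma_j$ is a negative formula. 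Thus the number $m_i$ of occurrences of $p_i$ in $\varphi'$ coincides with the number of positive occurrences of $p_i$ in $\varphi$, which is what (a) records.

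With the decomposition fixed, condition (c) follows at once: each $\gamma_j$ is negative, hence uniform with every variable occurring only negatively, so Proposition~\ref{orderpreserving operations} gives that $\val{\gamma_j}$ is order-reversing in each coordinate. For condition (b), fix any interpretation $\overline{A}\in\pp(W)^\ell$ of the parameters; then $\varphi'(p_1,\ldots,p_n,\overline{A})$ is built from variables and parameters using $\wedge$ and $\Diamond$, so Lemma~\ref{very simple->complete operator} tells us that $\val{\varphi'(p_1,\ldots,p_n,\overline{A})}$ is $\overline{m}$-additive, for exactly the $\overline{m}$ read off above.

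The delicate point is the bookkeeping of the first step: one must check that the abstraction really does separate the positive proposition-letter occurrences---which become the variable arguments of $\varphi'$---from those buried in the negative formulas---which are sealed inside the parameters---so that the two readings of ``$m_i$ occurrences'' genuinely agree. The reduction to definite antecedents (Proposition~\ref{Ind:Eqiv:Definite:Prop}) lets us take $\varphi'$ to be a $\{\wedge,\Diamond\}$-formula, in which case Lemma~\ref{very simple->complete operator} applies verbatim; if instead one wants to cover disjunction in the skeleton directly, it suffices to note that $\overline{m}$-additive maps are order-preserving (Lemma~\ref{Op:Composition:Props:Lemma}) and closed under pointwise union, and that an $\overline{m}$-additive map is $\overline{m'}$-additive whenever $m'_i\geq m_i$ for all $i$, so that taking $m_i$ to be the total number of positive occurrences absorbs the $\vee$-step of the induction.
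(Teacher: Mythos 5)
Your main argument is correct and is exactly the route the paper intends: the corollary is left without explicit proof precisely because, once the decomposition into the $\{\wedge,\Diamond\}$-skeleton $\varphi'$ and the negative blocks $\gamma_1,\ldots,\gamma_\ell$ is fixed, condition (b) is Lemma \ref{very simple->complete operator} and condition (c) is Proposition \ref{orderpreserving operations} applied to the (uniform, everywhere-negative) $\gamma_j$; your bookkeeping identifying $m_i$ with the number of positive occurrences of $p_i$ is the intended content of the ``in particular'' clause.

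The one genuine problem is your closing aside claiming that the $\vee$-step of the induction can be absorbed by inflating $\overline{m}$ to the total number of positive occurrences. This fails whenever some $p_i$ occurs in one disjunct of the skeleton but not the other. Take $\varphi' = p_1 \vee \Diamond p_2$, a perfectly good (non-definite) very simple Sahlqvist antecedent: $\val{\varphi'}(X_1,\varnothing) = X_1 \cup m_R(\varnothing) = X_1$, which need not be empty, so by Lemma \ref{Op:Composition:Props:Lemma}(2) this map is not $\overline{m}$-additive for \emph{any} $\overline{m}$. The two facts you invoke are individually true (monotone upgrading of $\overline{m}$, and closure of $\overline{m}$-additive maps under pointwise union for a \emph{fixed} $\overline{m}$), but the upgrading step presupposes that each disjunct's meaning function is already $\overline{m'}$-additive in \emph{every} coordinate, which is exactly what fails for a coordinate whose variable is absent from that disjunct. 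This is why the paper routes all disjunctions through Proposition \ref{Ind:Eqiv:Definite:Prop} (and Fact \ref{fact:conjunction and local correspondence}) and states the correspondence argument only for definite antecedents; your first route is the one to keep, and the corollary should be read as applying to definite very simple Sahlqvist antecedents.
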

%
%\begin{proof}
%For the first part, note that the identity map $id: \pp(W)\to \pp(W)$, the  intersection $\cap : \pp(W)\times \pp(W)\to \pp(W)$ ($\langle X, Y \rangle\mapsto  X\cap Y$) and semantic diamond operations $m_R : \pp(W)\to \pp(W)$ ($ X\mapsto m_R(X)$) are complete operators. The second part is proven by induction on $\varphi$.
%\end{proof}
%
%
%
\begin{example}\label{Simple:Sahl:Example}
Let us consider the very simple Sahlqvist formula
\[
p\wedge \Diamond  p\to \Box  p,
\]
%
%\marginnote{as observed by the negative reviewer, this reduction is wrong, we need to edit it.\\ {\bf W:} I think is it right now.}
which locally corresponds to the property of having at most one $R$-successor\footnote{This condition is also definable by the more familiar $\Diamond  p\to \Box  p$.}, i.e.\
\[
\forall z\forall u(Rxz \wedge Rxu \rightarrow z = u).
\]
The variable $p$ occurs twice positively in the antecedent, making $\val{p\wedge \Diamond  p}$ a 2-additive map. Hence, according to our reduction strategy, the monadic second-order quantification in the second-order translation
\[
\forall P [P(x) \wedge \exists y (Rxy \wedge P(y)) \to \forall u(Rxu \to P(u))]
\]
can be equivalently restricted to subsets of size at most 2. Doing this yields the equivalent $L_0$-formula
\begin{center}
$\forall z_1 \forall z_2[(x = z_1\vee x = z_2) \wedge \exists y (Rxy \wedge (y = z_1 \vee y = z_2)) \to \forall u(Rxu \to (u = z_1 \vee u = z_2))]$.
\end{center}
This can be simplified to
\begin{center}
$\forall z_1 \forall z_2[(x = z_1 \vee x = z_2) \wedge (Rxz_1\vee Rxz_2)) \to \forall u(xRu \to (u = z_1 \vee u = z_2))]$,
\end{center}
and reasoning a bit further this can be seen to be equivalent to
\begin{center}
$\forall z_1 \forall z_2[(Rxz_1\vee Rxz_2)) \to \forall u(Rxu \to (u = z_1 \vee u = z_2))]$,
\end{center}
which, in turn, is equivalent to $\forall z\forall u(Rxz \wedge Rxu \rightarrow z = u)$.

As seen above, the reduction strategy does not immediately yield the simplest first-order equivalent possible. Some further simplification will usually be possible, as will also be seen in examples \ref{Sahl:Example} and \ref{Atom:Ind:Example}. More optimal equivalents could be produced at the cost of complicating the reduction strategy. This will be further discussed in the conclusion.
\end{example}
\commment{
\subsection{Multiple occurrences of variables}
Before moving on to the more general classes of formulas in our hierarchy, let us present some observations that will allow us to significantly simplify the presentation in the following sections.
\begin{definition}
A non-uniform implication $\phi\to \psi$ is a {\em 1-implication} if every variable $p\in \mathsf{Prop}$ occurs positively in $\phi$ at most once.
\end{definition}

All the best known examples of Sahlqvist implications in the literature are 1-implications, and from an order-theoretic point of view, these formulas are much better behaved: for instance, the following is an easy consequence of  Proposition \ref{very simple->complete operator}:
\begin{prop}
Let $\phi\to \psi\in \ML$ be a very simple Sahlqvist implication s.t.\ $\phi$ is a positive formula. If $\phi\to \psi$ is a 1-implication then  $\val{\phi}$ is a complete operator.
\end{prop}
Moreover, as an immediate consequence of Proposition \ref{prop:very simple}, the tame valuations corresponding to the 1-very simple Sahlqvist implications map atomic propositions to singleton subsets.
This section is aimed at  showing that the  correspondence result  of any class of implications $\Phi\subseteq \ML$  can be obtained as a consequence of the correspondence result for the 1-implications in $\Phi$.

Given a frame $\f = (W,R)$, let $\mathsf{V}$ be the class of all valuations on $\f$, %Let us say that a subset $X\subseteq W$ is $\mathsf{V}$-{\em admissible} if $X = V(p)$ for some $V\in \mathsf{V}$ and some $p\in \mathsf{Prop}$. Assume that the collection of $\mathsf{V}$-admissible sets is closed under finite  unions.
and let $\mathsf{V}'$ be a subclass of $\mathsf{V}$. Let $m, k \in \mathbb{N}$, $\phi = \phi(r_1,\ldots, r_{m}, s_1, \ldots, s_{k})\in \ML$ be positive in the $r$-variables, % let $\alpha_1, \ldots, \alpha_{m}\in \ML$ be  positive in $p$, \marginpar{\tiny we don't even need to assume that $p$ occurs in $\alpha_i$ or in the gamma's..}
and  %$\gamma_1, \ldots,  \gamma_k,
$\psi = \psi(s_1,\ldots, s_k)\in \ML$.
Finally, let $\mathsf{P}\cup\{p\} = \{p_1,\ldots,p_m\}\cup\{p\}\subseteq \mathsf{Prop}$ a subset of fresh variables, i.e.\ not occurring in $\phi$ or $\psi$. Let $V'\sim_p V$ denote that $V'(s)=V(s)$ for all $s\in \mathsf{Prop}\setminus p$. Let $\mathsf{V'}^p_{\mathsf{P}}$ be defined as follows:
$V'' \in \mathsf{V'}^p_{\mathsf{P}}$ iff there exists some $V'\in \mathsf{V'}$ s.t.\ $V''\sim_p V'$ and $V''(p) = V'(p_1) \cup \cdots \cup V'(p_m)$. We use the notation $\phi(p_1/r_1, \ldots, p_m/r_m)$ to denote the formula obtained as a result of substituting $p_i$ with $r_i$, for $1\leq i \leq m$.
%is the class of all valuations consisting of all point-wise unions of up to $m$ members of $\mathsf{V}'$
%\marginpar{\tiny notice that I changed the def of $\mathsf{V}'_{\leq m}$}

\begin{prop}
Suppose that the following are equivalent:
\begin{eqnarray}\label{Eqn:A}
\forall V \in \mathsf{V} &[w \in \val{\phi(p_1/r_1, \ldots, p_m/r_m,\bigvee_{i=1}^{m}p_i/s_1, \ldots,  \bigvee_{i=1}^{m}p_i/s_k)}(V) \nonumber \\
&\Rightarrow w \in \val{\psi(\bigvee_{i=1}^{m}p_i/s_1, \ldots,  \bigvee_{i=1}^{m}p_i/s_k)}(V)]
\end{eqnarray}
\begin{eqnarray}\label{Eqn:B}
\forall V' \in \mathsf{V}' &[w \in \val{\phi(p_1/r_1, \ldots, p_m/r_m,\bigvee_{i=1}^{m}p_i/s_1, \ldots,  \bigvee_{i=1}^{m}p_i/s_k)}(V') \nonumber \\
&\Rightarrow w \in \val{\psi(\bigvee_{i=1}^{m}p_i/s_1, \ldots,  \bigvee_{i=1}^{m}p_i/s_k)}(V')]
\end{eqnarray}
Then the following statements are equivalent:
\begin{eqnarray}\label{Eqn:C}
\forall V \in \mathsf{V} &[w \in \val{\phi(p/r_1, \ldots, p/r_m, p/s_1, \ldots,  p/s_k)}(V) \nonumber \\
&\Rightarrow w \in \val{\psi(p/s_1, \ldots,  p/s_k)}(V)]
\end{eqnarray}
\begin{eqnarray}\label{Eqn:D}
\forall V' \in \mathsf{V'}_{\mathsf{P}}^p &[w \in \val{\phi(p/r_1, \ldots, p/r_m, p/s_1, \ldots,  p/s_k)}(V') \nonumber \\
&\Rightarrow w \in \val{\psi(p/s_1, \ldots,  p/s_k)}(V')].
\end{eqnarray}

\commment{
%\begin{eqnarray}\label{Eqn:B}
%\forall V' \in \mathsf{V}' &[w \in \val{\phi'(\alpha_1(p_1/p), \ldots, \alpha_{m}(p_m/p),\gamma_1(\bigvee_{i=1}^{m}p_i/p), \ldots,  \gamma_k(\bigvee_{i=1}^{m}p_i/p))}(V') \nonumber \\
%&\Rightarrow w \in \val{\psi(\bigvee_{i=1}^{m}p_i/p)}(V')]
%\end{eqnarray}
Then the following  statements are equivalent:
\begin{eqnarray}\label{Eqn:C}
\forall V \in \mathsf{V} & [w \in \val{\phi'(\alpha_1, \ldots, \alpha_m, \gamma_1, \ldots,  \gamma_k) }(V)  %\nonumber\\&
\Rightarrow w \in \val{\psi}(V)]
\end{eqnarray}
\begin{eqnarray}\label{Eqn:D}
\forall V' \in \mathsf{V}'_{\leq m} &[w \in \val{\phi'(\alpha_1, \ldots, \alpha_{m}, \gamma_1, \ldots,  \gamma_k)}(V') %\nonumber \\&
\Rightarrow w \in \val{\psi}(V')]
\end{eqnarray}
}
\end{prop}

\begin{proof}
%Since $\mathsf{V}'$ is a subclass of $\mathsf{V}$, and $\mathsf{V}$ is closed under the operation generating $\mathsf{V}'_{\leq m}$ from $\mathsf{V}'$,  $\mathsf{V}'_{\leq m}$ is also a subclass of $\mathsf{V}$. Hence
Clearly, (\ref{Eqn:C})  implies (\ref{Eqn:D}).
To prove that (\ref{Eqn:D}) implies (\ref{Eqn:C}), we will show that (\ref{Eqn:D}) $\Rightarrow$ (\ref{Eqn:B}) $\Rightarrow$ (\ref{Eqn:A}) $\Rightarrow$ (\ref{Eqn:C}).

(\ref{Eqn:B}) $\Rightarrow$ (\ref{Eqn:A}) we have by assumption. Condition (\ref{Eqn:A}) implies (\ref{Eqn:C}): indeed,  since the variables $p_1,\ldots, p_m$ are fresh, then (\ref{Eqn:C}) is equivalent to the special case of (\ref{Eqn:A}) obtained by imposing the restriction that $V(p) = V(p_1) = \cdots = V(p_m)$ (the details of this proof are left to the reader).

For the sake of the implication from (\ref{Eqn:D}) to (\ref{Eqn:B}), assume (\ref{Eqn:D}) and that
\begin{equation}
\label{eq:E}
w \in \val{\phi(p_1/r_1, \ldots, p_m/r_m, \bigvee_{i=1}^{m}p_i/s_1, \ldots,  \bigvee_{i=1}^{m}p_i/s_k)}(V')
\end{equation}
for some  $V' \in \mathsf{V}'$. Let $V''$ be s.t.\ $V''\sim_p V'$ and $V''(p) = \bigcup_{i=1}^m V'(p_i)$. Clearly, $V''\in \mathsf{V'}_{\mathsf{P}}^p$.
%Since  $\alpha_i$ is positive in $p$ for every $1\leq i\leq m$, we get that $$\val{\alpha_i(p_i/p)}(V') \subseteq \val{\alpha_i}(V'') \mbox{ for } 1\leq i\leq m.$$ Also, $V'(\bigvee_{i=1}^{m}p_i) = \bigcup_{i=1}^{m}V'(p_i) = V''(p)$ implies that $$\val{\gamma_j(\bigvee_{i=1}^{m}p_i/p)}(V') = \val{\gamma_j}(V'') \mbox{ for } 1 \leq j \leq k.$$
Since $\phi$ is  positive  in the $r$-variables, the following chain holds:
\begin{center}
\begin{tabular}{r c l}
$w$& $\in$ & $\val{\phi(p_1/r_1, \ldots, p_m/r_m, \bigvee_{i=1}^{m}p_i/s_1, \ldots,  \bigvee_{i=1}^{m}p_i/s_k)}(V')$\\
&$\subseteq$& $\val{\phi(p/r_1, \ldots, p/r_m, p/s_1, \ldots,  p/s_k)}(V'')$.\\
\end{tabular}
\end{center}
%
%
%\[
%w \in \val{\phi'(\alpha_1, \ldots, \alpha_{m}, \gamma_1, \ldots,  \gamma_k)}(V''),
%\]
%
 Therefore, by assumption (\ref{Eqn:D}),  $w \in \val{\psi(p/s_1, \ldots,  p/s_k)}(V'')$. But again,  $V'(\bigvee_{i=1}^{m}p_i) = V''(p)$ implies that $$\val{\psi(p/s_1, \ldots,  p/s_k)}(V'') = \val{\psi(\bigvee_{i=1}^{m}p_i/s_1, \ldots,  \bigvee_{i=1}^{m}p_i/s_k)}(V'),$$ which finishes the proof.
 \end{proof}
The statement of the Proposition above is more general than we will need: when it is applied to our case of interest, that of the 1-implications, the $r$-variables and the $s$-variables in $\phi$ are thought of as the place holders for the positive and negative occurrences of the variable $p$.  This statement is also less general than we need, treating just  multiple occurrences of {\em one} variable. However, it is straightforward, modulo introducing another set of indexes, to extend it so as to treat multiple occurrences of  $n$  variables.

 As we mentioned early on, the proposition above provides us with a uniform way of deriving the correspondence result for any class of implications $\Phi$ from the  correspondence result for the class 1-$\Phi$ of the 1-implications in $\Phi$: indeed,
notice that if the class $\mathsf{V'}$ is $L_0$-definable, then so is $\mathsf{V'}_{\mathsf{P}}^p$; therefore if a reduction strategy is available for 1-$\Phi$ w.r.t.\ the class $\mathsf{V'}$ of tame valuations, then the Proposition above guarantees that a reduction strategy for any formula $\phi\to \psi\in \Phi$ is available  w.r.t.\ a class $\mathsf{V'}_{\phi}$ of tame valuations that only depends on the multiplicity of occurrences of each variable in $\phi$. Moreover, the reduction algorithm for $\Phi$ can be effectively derived from the reduction algorithm for 1-$\Phi$ in the following way (here again we just consider multiple occurrences of just {\em one} variable, leaving the multi-variable case to the reader):
%the amounts to the following variation on  consists in modifying the first-order description of the tame valuations in  $\mathsf{V'}_{\phi}$ in the following way:
if $p\in \mathsf{AtProp}$ occurs positively in $\phi$ $m$ times, then consider the  {\em 1-formula transform} of $\phi\to \psi$, i.e.\ the formula $\phi^\ast\to \psi^\ast$, where $\phi^\ast$ is obtained by replacing each positive occurrence of $p$ in $\phi$  by a  fresh variable in $\mathsf{P}\subseteq \mathsf{AtProp}$ as above, and each negative occurrence of $p$ in $\psi$ by $\bigvee_{i=1}^m p_i$, and $\psi^\ast = \psi(\bigvee_{i=1}^m p_i/p)$; consider the standard translation of $\phi^\ast\to \psi^\ast$; by assumption we can eliminate the second order variables $P_i$ from this standard translation by replacing the quantification $\forall P_i$ with $\forall z_i$, where $z_i$ is a fresh variable, and the single occurrences of $P_iy$ with its first-order description $\beta_i(z_i, y)$ derived from the tame valuations. %\marginpar{this statement should be explained better}
Then the first order correspondent of $\phi\to \psi$ is effectively obtained by replacing the quantification $\forall P$ with $\forall z_1\cdots\forall z_m$, where the $z_i$s are fresh variables, and each occurrence of $Py$ with $\bigvee_{i=1}^m \beta_i(z_i, y)$.

In the remainder of the paper, we will restrict our treatment to the 1-formulas in each class, and will provide more details on the general account only when needed.

}

\subsection{Sahlqvist implications}
\label{ssec:sahlqvist:implications}

\paragraph{The reduction strategy.}
Another promising subclass of tame valuations is formed by those $V_2\in \mathsf{Val}(\f)$ such that %there exists some $m\in \mathbb{N}$ such that,
for
every $p\in \Prop$,  $V_2(p)= R[z]$ for some $z\in W$.
%$V_2(p)= R[T]$ for some $T\subseteq_m W$ %(so $T$ depends on $p\in \Prop$ but $m$ does not).
Indeed, suppose that the following were
equivalent:
\begin{enumerate}
\item  $\forall V(w\in \val{\varphi}(V) \Rightarrow w\in \val{\psi}(V))$
\item  $\forall V_2(w\in \val{\varphi}(V_2) \Rightarrow w\in \val{\psi}(V_2))$.
\end{enumerate}
This would mean that
$$\f\models \forall P_1\ldots\forall P_n ST_x(\varphi\rightarrow \psi)[x := w]\quad
\mbox{ iff }\quad
\f\models \forall P_1^2\ldots\forall P_n^2 ST_x(\varphi\rightarrow
\psi)[x := w]$$ where the variables $P_i^2$ would not range over
$\pp(W)$, but only over $\{R[z]\ |\ z\in W\}$.
%$\{R[T]\ |\ T\subseteq_m W\}$.
Therefore the
formula on the right-hand side of the `iff' above can be transformed into a
first-order formula %by replacing each  $P_i^1$ with a fresh first-order variable $z_i$ and atomic formulas such as $P_i^1 x$ with $z_iR x$.
by replacing each  $\forall P_i^2$ in the prefix with $\forall z_i$
% \forall z^2_i \ldots \forall z^m_i$
and each atomic formula of the form $P_i^2 y$ with $Rz_i y$.
%$\bigvee_{j=1}^m z^j_i Ry$, where all the $z$s are fresh variables.

Actually, this argument can be extended to valuations $V_2$ such that for every $p\in \Prop$,
$V_2(p) = \bigcup R^{k_j}[z_j]$ for a finite set $\{z_1,\ldots z_m\}\subseteq  W$ of uniformly bounded size $m$,
%$V_2(p) = \s_k[T]$ for some $T\subseteq_m W$,
and some $k_j\in \mathbb{N}$  bounded by the {\em modal depth} of the formula, i.e.\ the maximum number of nested boxes occurring in the formula.  %is the{\em modal depth}\footnote{The {\em modal depth} of $\phi\in \ML$ is \texttt{insert definition}} of $\varphi$,  $\s_k = \T_1\circ\cdots \circ \T_k$ and,for every $1\leq h\leq k$, $\T_h = R$ or $\T_h = \Delta_W$, the identity
%relation on $W$ %$R^0 = Id$, $R^1 = R$, $R^2 = R\circ R$ etc.
Notice that the valuations $V_1$ ranging over subsets of bounded size are the
special case of $V_2$ where all $k_j$ are equal to $0$. %$\T_h = \Delta_W$ for every $h$).
The formula on the right-hand side of the `iff' above can be
equivalently transformed into a first-order formula
by replacing each  $\forall P^2$ in the prefix with $\forall z_{1}\cdots\forall z_{m}$ (choosing fresh individual variables for each $P$),
%$\forall z^1_i \forall z^2_i \ldots \forall z^m_i$,
and each formula of the form $P^2 y$ with
% a disjunction (indexed by $1\leq j\leq m$) of  first-order formulas in $L_0$, each of which says
an $L_0$-formula which says `there exists an $R$-path from one of the   $z_j$s to $y$ in $k_j$ steps'. Such formula is a disjunction for $j\in \{1,\ldots,m\}$ of formulas of the following form:
\[
\exists v_0,\ldots v_{k_j}[z_j= v_0 \wedge \bigwedge_{\ell = 0}^{k_j - 1} R v_\ell  v_{\ell+1}  \wedge v_{k_j} = y].
\]
%
%$$\exists v_1,\ldots v_{k_i-1}[z_i R v_1\wedge  v_1 R v_2 \wedge\ldots\wedge  v_{k_i-1} R y)].$$
%$$\exists v_1,\ldots v_k[(z^j_i= v_1\vee z_i R v_1)\wedge (v_1= v_2 \vee v_1 R v_2) \wedge\ldots\wedge (v_k = y\vee v_k R y)].$$
This time we are after some conditions on $\varphi$ and $\psi$
that guarantee that the universal quantification  $\forall V$ can be equivalently
replaced with the universal quantification  $\forall V_2$.
\paragraph{Order-theoretic conditions.}
The maps $f, g: \pp(W)\rightarrow \pp(W)$ form
an {\em adjoint pair} (notation: $f\dashv g$) iff for every $X, Y\in \pp(W)$,
\[
f(X)\subseteq Y\ \mbox{ iff }\   X\subseteq g(Y).
\]
Whenever $f\dashv g$, $f$ is the {\em left adjoint} of $g$ and $g$
is {\em the right adjoint} of $f$. One important property of adjoint
pairs of maps is that if a map  admits a
left (resp.\ right) adjoint, the adjoint is unique and can be
computed pointwise from the map itself and the order (which in our case is the inclusion). This means that admitting
a left (resp.\ right) adjoint is an intrinsically
order-theoretic property of maps.

%The facts stated in the following proposition will be crucial for the remainder:
\begin{prop}
\label{prop:right adjoints}
\begin{enumerate}
\item Right adjoints between complete lattices are exactly the {\em completely meet-preserving maps}, i.e.\ in the concrete case of powerset algebras $\p(W)$ right adjoints are exactly those maps $g$ such that $g(\bigcap \x) = \bigcap \{g(X) \mid X \in \x \}$ for all $\x \subseteq \p(W)$.
\item Right adjoints on a powerset algebra $\p(W)$ are exactly maps of the form $l_{\s}$ (defined in \eqref{def:lr}) for some binary relation $\s$ on $W$.
\item For any binary relation $\s$ on $W$, the left adjoint of $l_{\s}$ is the map $m_{\s^{-1}}$, defined by the assignment $X\mapsto \s[X]$.
\end{enumerate}
\end{prop}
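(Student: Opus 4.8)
The plan is to treat the three items in order, using (1) as the bridge between the abstract notion of right adjoint and the concrete modal operators $l_\s$ and $m_{\s^{-1}}$. For (1), I would first record the two triangular inequalities that any adjunction $f\dashv g$ yields: the unit $X\subseteq g(f(X))$ and the counit $f(g(Y))\subseteq Y$, obtained by instantiating the defining biconditional at $Y:=f(X)$ and at $X:=g(Y)$ respectively. These immediately give that both $f$ and $g$ are order-preserving (e.g.\ from $X_1\subseteq X_2\subseteq g(f(X_2))$ one transposes to $f(X_1)\subseteq f(X_2)$). To show a right adjoint $g$ is completely meet-preserving, the inclusion $g(\bigcap\x)\subseteq\bigcap\{g(X)\mid X\in\x\}$ is immediate from monotonicity, while the reverse inclusion is proved by transposing across the adjunction: it is equivalent to $f(\bigcap\{g(X)\mid X\in\x\})\subseteq X$ for every $X\in\x$, which follows from monotonicity of $f$ together with the counit. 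For the converse, given a completely meet-preserving $g$ I would define its candidate left adjoint by $f(X):=\bigcap\{Y\mid X\subseteq g(Y)\}$ and verify the defining biconditional; the key computation is $g(f(X))=\bigcap\{g(Y)\mid X\subseteq g(Y)\}\supseteq X$, which uses precisely the meet-preservation of $g$.

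For (2), one direction is a direct calculation: each $l_\s$ preserves arbitrary intersections, since $w\in l_\s(\bigcap\x)$ unpacks to ``for every $X\in\x$ and every $v$, $\s w v$ implies $v\in X$'', which is exactly membership in $\bigcap_{X\in\x} l_\s(X)$; by (1) this makes $l_\s$ a right adjoint. The substantive direction is to reconstruct a relation from an arbitrary right adjoint $g$. Here I would exploit the representation of any set as a meet of co-singletons, namely $X=\bigcap_{v\notin X}(W\setminus\{v\})$ (with the empty-meet convention giving $W$ when $X=W$), and define $\s$ by stipulating $\s w v$ iff $w\notin g(W\setminus\{v\})$. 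Applying complete meet-preservation of $g$ to this representation then yields $g(X)=\bigcap_{v\notin X} g(W\setminus\{v\})=\{w\mid \forall v\notin X,\ \neg\,\s w v\}=l_\s(X)$, as required.

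For (3), I would verify the adjunction $m_{\s^{-1}}\dashv l_\s$ directly by unwinding the definitions: $m_{\s^{-1}}(X)\subseteq Y$ says that whenever $\s v w$ for some $v\in X$ then $w\in Y$, which reorganizes, by swapping the order of the two quantifiers, into ``for every $v\in X$, every $\s$-successor of $v$ lies in $Y$'', i.e.\ $X\subseteq l_\s(Y)$. Uniqueness of left adjoints, already noted in the text, then identifies $m_{\s^{-1}}$ as \emph{the} left adjoint of $l_\s$. The only real subtlety, and the main point to get right, is the reconstruction step in (2): choosing the correct defining clause for $\s$ and recognizing that every set is the intersection of the complements of the points it omits. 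Everything else is routine quantifier manipulation once the triangular inequalities from the adjunction are in hand.
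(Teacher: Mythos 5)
Your proposal is correct and follows essentially the same route as the paper: in particular, your reconstruction of the relation in item 2 via $\s wv$ iff $w\notin g(W\setminus\{v\})$ together with the representation $X=\bigcap_{v\notin X}(W\setminus\{v\})$ and complete meet-preservation is exactly the paper's argument. The only difference is that you additionally write out the standard proofs of items 1 and 3 (via the unit/counit inequalities and the candidate adjoint $f(X)=\bigcap\{Y\mid X\subseteq g(Y)\}$, and by direct quantifier manipulation, respectively), which the paper delegates to a citation and to the reader.
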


\begin{proof}
1. See \cite[Proposition 7.34]{DaPr}.
2.  We leave to the reader to verify that every map of form $l_{\s}$  is completely meet-preserving, hence is a right adjoint. Conversely, let $g:
\pp(W)\to \pp(W)$ be  a right adjoint. Then by item 1 above, $g$ is completely meet-preserving. Define
$\s\subseteq W\times W$ as follows: for every $x,z\in W$,

\begin{center}
$ \s xz \quad $ iff $\quad x\not\in   g(W\setminus \{z\}).$
\end{center}
Hence,
\begin{center}
 $x\in l_\s(W\setminus\{z\})\ $  iff  $\ \s[x]\subseteq (W\setminus\{z\})\ $ iff $\
z\notin \s[x]\ $ iff
 $\ x\in g(W\setminus\{z\}),$
\end{center}
which shows our claim for all the special subsets of $W$ of type
$W\setminus\{z\}.$ In order to show it in general, fix $X\in \pp(W)$
and notice that $X= \bigcap_{z\notin X} (W\setminus\{z\})$. Using
the fact that $g$ is completely meet-preserving and the
special case shown above, we get:

\begin{center}
\begin{tabular}{r c l l}
$g(X)$&$ =$& $ g(\bigcap\{
(W\setminus\{z\})\ |\ z\notin X\})$& \\
&$=$& $\bigcap\{g(W\setminus\{z\})\ |\ z\notin X\}$ & \\
&$=$& $\bigcap\{l_\s(W\setminus\{z\})\ |\ z\notin X\} $& \\
&$=$& $l_\s(\bigcap\{(W\setminus\{z\})\ |\ z\notin X\}$& ($\ast$)\\
&$=$& $l_\s(X).$& \\
\end{tabular}
\end{center}
The marked equality can be verified directly, but also follows
from the more general fact that $l_{\s}$ is completely meet-preserving for every $\s$. 3. Left to the reader.
\end{proof}
\commment{
The reason why we are interested in adjoint pairs of maps is that,
for every frame $\f= (W, R)$, $m_{R^{-1}}\dashv l_{R}$ (and
$m_{R}\dashv l_{R^{-1}}$), i.e.\ for every $X, Y\in \pp(W)$,
\[
m_{R^{-1}}(X)\subseteq Y\ \mbox{ iff }\  X\subseteq l_R(Y).
\]
}
%
%Now we are ready to show that t
Consider the following conditions on $\phi$:
\begin{enumerate}
\item[(a)] $\varphi(p_1,\ldots, p_n) = \varphi'(\chi_1(p_{j_1})/q_1,\ldots, \chi_{n'}(p_{j_{n'}})/q_{n'}, \gamma_1, \ldots, \gamma_{\ell})$, where each placeholder variable $q_1,\ldots, q_{n'}$ occurs exactly once in $\phi'$,\footnote{This notation implies that $n'\geq n$, and hence that multiple occurrences are possible in $\phi$ of each variable $p_1,\ldots, p_n$.} and each $\chi$-formula contains exactly one variable $p\in \{p_1,\ldots, p_n\}$, and moreover %$\val{\varphi} = \val{\varphi'}(\val{\chi_1},\ldots, \val{\chi_n})$, where
\item[(b)] for any $\ell$-tuple  of parameters $\overline{A}$, the meaning function $\val{\varphi'(\overline{q}, \overline{A})}$ is a $\overline{1}$-additive map;
%\item[(b)] for any $\ell$-tuple  of parameters $\overline{A}$, the meaning function $\val{\varphi'(\overline{q}, \overline{A})}$ is an $\overline{m}$-additive map  where $\overline{m}\in \mathbb{N}^{n'}$,  and for each $1\leq i\leq n'$, the value $m_{i}$ is the number of placeholder variables $q$ in $\phi'$ which are substituted in $\phi$ for those $\chi$-formulas in which the variable $p_{j_{i}}$ occurs;\marginnote{I'm sure there is a better way to say this... suggestions are welcome}
%
\item[(c)]   $\val{\chi_i}: \pp(W)\to \pp(W)$ is a right
adjoint for each $1\leq i\leq n'$,
i.e.\ there exists some $f_i: \pp(W)\to \pp(W)$ such that $f_i\dashv\val{\chi_i}$;
%
%\[
%$f_i(X)\subseteq Y\ \mbox{ iff }\   X\subseteq \val{\chi_i}(Y)$ for all $X, Y\in \pp(W)$;
%\]
\item[(d)] for every $1\leq i\leq n'$, the map $f_i$  is defined by the assignment $X\mapsto R^{k_i}[X]$; %where $\s_i = %\T^1_i\circ\cdots\circ \T^k_i$ and, for every $1\leq h\leq k$, $\T^h_i = R$ or $\T^h_i = \Delta_W$.
\item[(e)] $\val{\gamma_1}$ to $\val{\gamma_{\ell}}$ are order-reversing in each coordinate.
\end{enumerate}
%\marginnote{I added this observation in the hope of making it intuitively better at least...}
%The second part of condition (b) looks more complicated than it will turn out to be in practice. This is because the proposition below applies in fact to any modal signature, and not just to the specific setting of $\ML$ which concerns the present paper. In the next stage, it will be clear that the $\chi$-formulas boil down to boxed atoms in the specific setting of $\ML$, and the value $m_{i}$ for the placeholder variable $q_{i}$ boils down to the number of boxed atoms in $\phi$ in the variable $p_{j_i}$.

Notice that, by Proposition \ref{prop:right adjoints}, condition (c) already guarantees that for every $i$, $f_i$  is defined by $X\mapsto \s_i[X]$ for some {\em arbitrary} binary relation $\s_i$ on $W$; however, since $\s_i$ is arbitrary,  this is not yet enough to guarantee that valuations defined by $p_i\mapsto f_i(X_i)$ for some finite $X_i\subseteq W$  be $L_0$-definable. Condition (d) above guarantees this last point.

Notice that if $\chi_i(p_{j_i}) = p_{j_i}$ for every $1\leq i\leq n'$, then the conditions above become an equivalent, if notationally heavier, version of conditions (a)-(c) of the previous subsection.

%As usual $\psi$ is assumed to be positive and hence
For every frame $\f$, %and every $k\in \mathbb{N}$,
let $\mathsf{Val}_2(\f)$ be the set of valuations on $\f$ such that, for every $p\in \Prop$, $V_2(p) = \bigcup R^{k_i}[z_i]$ for some $\{z_1,\ldots z_m\}\subseteq W$ and some $k_i\in \mathbb{N}$ bounded by the modal depth of $\phi$, where $m$ is the maximum number of times that a variable occurs in $\phi$. %$\s \subseteq W\times W$.
%$V_2(p) = \s[S]$, where $S\in \p_{m}(W)$, $\s  = \T_1\circ\cdots \circ \T_k$, and, for every $1\leq h\leq k$, $\T_h = R$ or $\T_h = \Delta_W$.

%$m, k\in \mathbb{N}$, let $\mathsf{Val}^k_m(\f)$ be the set of valuations on $\f$ such that, for every $p\in \Prop$, %$V_2(p) = \s[S]$, where %$S\in \p_{n}(W)$ and  $\s \subseteq W\times W$.
%$V_2(p) = \s[S]$, where $S\in \p_{m}(W)$,  $\s  = \T_1\circ\cdots \circ \T_k$, and, for every $1\leq h\leq k$, $\T_h = R$ or $\T_h = \Delta_W$.
%
%
\begin{prop}
\label{semantic on simple sahl} Let $\varphi\to \psi\in \ML$ be such that $\varphi$
verifies conditions (a)-(e) above and $\val{\psi}$ is order-preserving in each coordinate. %Let $V_2$ range over the set ofvaluations on $\f$ such that for every $i$, $V_2(p_i) = f_i(T_i)$ for $z_i \in W$.
%Let $m$ be the degree of $\val{\phi'}$.
Then the following are equivalent:
\begin{enumerate}
\item  $(\forall V\in \mathsf{Val}(\f))[w\in \val{\varphi}(V) \Rightarrow w\in \val{\psi}(V)]$
\item  $(\forall V_2\in \mathsf{Val}_2(\f))[w\in \val{\varphi}(V_2) \Rightarrow w\in \val{\psi}(V_2)]$.
\end{enumerate}
\end{prop}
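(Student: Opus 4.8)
The proof will follow the same template as Proposition~\ref{prop:very simple}, with the adjunction conditions (c)--(d) replacing the direct finite-subset argument. The direction $(1\Rightarrow 2)$ is immediate, since $\mathsf{Val}_2(\f)\subseteq\mathsf{Val}(\f)$. For $(2\Rightarrow 1)$, I would fix $V\in\mathsf{Val}(\f)$ and $w\in W$ with $w\in\val{\varphi}(V)$, and construct a valuation $V_2\in\mathsf{Val}_2(\f)$ with $V_2\leq V$ pointwise and $w\in\val{\varphi}(V_2)$; then assumption (2) gives $w\in\val{\psi}(V_2)$, and order-preservation of $\val{\psi}$ together with $V_2\leq V$ yields $w\in\val{\psi}(V_2)\subseteq\val{\psi}(V)$, closing the argument exactly as in the very simple case.

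The construction of $V_2$ is the heart of the matter. Writing $\overline{A}=(\val{\gamma_1}(V),\ldots,\val{\gamma_\ell}(V))$, condition (a) gives
\[
\val{\varphi}(V)=\val{\varphi'}\big(\val{\chi_1}(V(p_{j_1})),\ldots,\val{\chi_{n'}}(V(p_{j_{n'}})),\overline{A}\big).
\]
Since $\val{\varphi'(\overline q,\overline A)}$ is $\overline 1$-additive by (b), Lemma~\ref{Op:Composition:Props:Lemma}(2) forces each argument $\val{\chi_i}(V(p_{j_i}))$ to be nonempty, and the complete-operator (singleton decomposition) property of an $\overline 1$-additive map lets me extract witnesses $u_i\in\val{\chi_i}(V(p_{j_i}))$, $1\leq i\leq n'$, with
\[
w\in\val{\varphi'}\big(\{u_1\},\ldots,\{u_{n'}\},\overline{A}\big).
\]

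The key step is then to pass through the adjunctions. For each $i$, the inclusion $\{u_i\}\subseteq\val{\chi_i}(V(p_{j_i}))$ is equivalent, via $f_i\dashv\val{\chi_i}$, to $f_i(\{u_i\})\subseteq V(p_{j_i})$, and by (d) this reads $R^{k_i}[u_i]\subseteq V(p_{j_i})$. I would then define
\[
V_2(p_j):=\bigcup\{R^{k_i}[u_i]\mid 1\leq i\leq n',\ j_i=j\}
\]
for $1\leq j\leq n$, and $V_2(q):=\emptyset$ for the remaining variables. Each $V_2(p_j)$ is a union of at most $m$ sets $R^{k_i}[u_i]$ with $k_i$ bounded by the modal depth of $\phi$, so $V_2\in\mathsf{Val}_2(\f)$; and since each $R^{k_i}[u_i]\subseteq V(p_{j_i})$, we have $V_2\leq V$ pointwise. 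To verify $w\in\val{\varphi}(V_2)$, note that $R^{k_i}[u_i]=f_i(\{u_i\})$ is one of the sets in the union defining $V_2(p_{j_i})$, so $f_i(\{u_i\})\subseteq V_2(p_{j_i})$, whence $\{u_i\}\subseteq\val{\chi_i}(V_2(p_{j_i}))$ by running the adjunction backwards; moreover $V_2\leq V$ with (e) gives $\val{\gamma_t}(V)\subseteq\val{\gamma_t}(V_2)$. Order-preservation of $\val{\varphi'}$ (Lemma~\ref{Op:Composition:Props:Lemma}(1)), applied coordinatewise, then upgrades $w\in\val{\varphi'}(\{u_1\},\ldots,\{u_{n'}\},\overline A)$ to $w\in\val{\varphi}(V_2)$, as required.

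I expect the main obstacle to be bookkeeping rather than conceptual: keeping the two uses of each adjunction straight (forward, to establish $V_2\leq V$; backward, to establish $w\in\val{\varphi}(V_2)$) and confirming that the constructed $V_2$ genuinely lands in $\mathsf{Val}_2(\f)$, in particular that the multiplicity bound $m$ and the modal-depth bound on the exponents $k_i$ are respected. These are precisely the points where conditions (a), (c) and (d) do their work, and where the restriction to \emph{atomic} box-forms keeps the witnesses of the simple shape $R^{k_i}[u_i]$ rather than something that would escape $L_0$-definability.
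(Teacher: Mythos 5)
Your proposal is correct and follows essentially the same route as the paper's proof: extract singleton witnesses via the $\overline{1}$-additivity of $\val{\varphi'}$, convert $u_i\in\val{\chi_i}(V(p_{j_i}))$ into $f_i(\{u_i\})\subseteq V(p_{j_i})$ by the adjunction, define $V_2$ as the union of the sets $f_i(\{u_i\})$, run the adjunction backwards to get $w\in\val{\varphi}(V_2)$, and finish with order-preservation of $\val{\psi}$. The only (inessential) differences are that the paper takes the union defining $V_2(p)$ over all indices $j$ with $f_j(\{z_j\})\subseteq V(p)$ rather than only those with $j_i=j$, and re-expands the union decomposition where you invoke plain monotonicity of $\val{\varphi'}$.
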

\begin{proof} (1 $\Rightarrow$ 2) Clear. (2 $\Rightarrow$ 1) %Assume (see Subsubsection \ref{sssec:non-uniform}) that all the variables occurring in $\psi$ also occur in$\varphi$. Let these  be $p_1,\ldots, p_n$,
%Let  $p_1,\ldots, p_n$ be the variables occurring in $\varphi$.
%and let $m_i$ be the degree of $\val{\varphi'}$
%in the  $i$-th coordinate, for $1\leq i\leq n$.
Let  $V\in \mathsf{Val}(\f)$ and  $w\in W$ s.t.\ $w\in \val{\varphi}(V)$. Hence, $$\emptyset
\neq \val{\varphi}(V)= \val{\varphi'}(\val{\chi_1}(V(p_{j_1})),\ldots,
\val{\chi_{n'}}(V(p_{j_{n'}})), \val{\gamma_1}(V), \ldots, \val{\gamma_{\ell}}(V)).$$ By assumption (b) and  Lemma \ref{Op:Composition:Props:Lemma}, this implies that %$\val{\varphi'}$ is a %composition of
%complete operator,  %by Lemma \ref{Op:Composition:Props:Lemma}.1,
$\val{\chi_i}(V(p_{i}))\neq \emptyset$ for every  $1\leq i\leq n'$, and
moreover, %because every set is the union of the singletons of its elements and complete operators preserve arbitrary unions in each coordinate,
the following chain of equalities holds:
\begin{center}
\begin{tabular}{c l l}
& $w\in \val{\varphi}(V)$\\
$=$& $ \val{\varphi'}(\val{\chi_1}(V(p_{j_1})),\ldots,
\val{\chi_{n'}}(V(p_{j_{n'}})), \val{\gamma_1}(V), \ldots, \val{\gamma_{\ell}}(V))$ & assumption (a)\\
$=$ & $\bigcup\{\val{\varphi'}(\{x_1\},\ldots, \{x_{n'}\}, \val{\gamma_1}(V), \ldots, \val{\gamma_{\ell}}(V))\ |\ [x_i\in
\val{\chi_i}(V(p_{j_i}))]_{i=1}^{n'}\}$ & assumption (b)\\
$=$ & $\bigcup\{\val{\varphi'}(\{x_1\},\ldots, \{x_{n'}\}, \val{\gamma_1}(V), \ldots, \val{\gamma_{\ell}}(V))\ |\ [f_i(\{x_i\})\subseteq
V(p_{j_i})]_{i=1}^{n'}\}$ & assumption (c)\\
%
%&$=$ & $\bigcup\{\val{\varphi'}(\{x_1\},\ldots,\{x_n\})\ |\ f_i(\{x_i\})\subseteq V(p_i), 1\leq i\leq n\}$, &
\end{tabular}
\end{center}
%
%where the last equality holds by assumption (c).
Then $w \in \val{\varphi'}(\{z_1\},\ldots, \{z_{n'}\}, \val{\gamma_1}(V), \ldots, \val{\gamma_{\ell}}(V)))$ for some
 $z_1,\ldots,z_{n'}\in W$ such that $f_i(\{z_i\})\subseteq V(p_{j_i})$ for each $1\leq i\leq n'$.
Let $V_2$ be the valuation that maps any $q\in \Prop\setminus\{p_1,\ldots, p_n\}$ to $\emptyset$, and each $p\in \{p_1,\ldots,p_n\}$ to $\bigcup\{f_j(\{z_j\})\mid f_j(\{z_j\})\subseteq V(p)\}$.
By assumption (d), $V_2\in \mathsf{Val}_2(\f)$. Moreover $f_i(\{z_i\})\subseteq V(p_{j_i})$ for every $1\leq i\leq n'$.
Let us show that
%$w \in \val{\varphi}(V_2)$. Indeed,
%Let $V_2$ be any valuation such that $V_2(p_i)= f_i(\{z_i\})$. Then
$w \in \val{\varphi}(V_2)$:
\begin{center}
\begin{tabular}{r  l l}
 $w\in $& $\val{\varphi'}(\{z_1\},\ldots, \{z_{n'}\}, \val{\gamma_1}(V), \ldots, \val{\gamma_{\ell}}(V))$\\
 $\subseteq $& $\bigcup\{\val{\varphi'}(\{x_1\},\ldots, \{x_{n'}\}, \val{\gamma_1}(V), \ldots, \val{\gamma_{\ell}}(V))\ |\
[f_i(\{x_i\})\subseteq V_2(p_{j_i})]_{i=1}^{n'}\}$\\
 $\subseteq $& $\bigcup\{\val{\varphi'}(\{x_1\},\ldots, \{x_{n'}\}, \val{\gamma_1}(V_2), \ldots, \val{\gamma_{\ell}}(V_2))\ |\
[f_i(\{x_i\})\subseteq V_2(p_{j_i})]_{i=1}^{n'}\}$ & assumption (e)\\
 $= $& $\bigcup\{\val{\varphi'}(\{x_1\},\ldots, \{x_{n'}\}, \val{\gamma_1}(V_2), \ldots, \val{\gamma_{\ell}}(V_2))\ |\
[x_i\in \val{\chi_i}(V_2(p_{j_i}))]_{i=1}^{n'}\}$ &  assumption (c)\\

%& $ =$ &$ \val{\varphi'}(\bigcup\{\{x_i\}\ |\ f_i(\{x_i\})\subseteq V_2(p_i)\})_{i=1}^n$\\
%
%& $ =$ &$ \val{\varphi'}(\bigcup\{\{x_i\}\ |\ \{x_i\}\subseteq \val{\chi_i}(V_2(p_i))\})_{i=1}^n$\\
%
 $=$ & $ \val{\varphi'}(\val{\chi_1}(V_2(p_{j_1})),\ldots, \val{\chi_{n'}}(V_2(p_{j_{n'}})), \val{\gamma_1}(V_2), \ldots, \val{\gamma_{\ell}}(V_2))$ & assumption (b)\\
 $=$ & $ \val{\varphi}(V_2)$.
\end{tabular}
\end{center}
%The second inclusion holds since $V_2(p)\subseteq V(p)$ for every $p\in \Prop$ and the $\val{\gamma}s$ are order-reversing; the first and second equalities hold by assumptions (c) and (b) respectively.
 %By Lemma \ref{Dist:Prop:Composition:Lemma}, % and because of  item (b) of our assumptions, then
%\begin{center}
%
%
%
%
\commment{
\begin{center}
\begin{tabular}{r c l }
$w\in \val{\varphi}(V)$ &$=$& $ \val{\varphi'}(\val{\chi_1}(V(p_{1})),\ldots,
\val{\chi_n}(V(p_{n})))$ \\
&$=$ & $\bigcup\{\val{\varphi'}(S_1, \ldots, S_n)\ |\ S_i\subseteq_{m_{i}}
\val{\chi_i}(V(p_{i})), 1\leq i\leq n\}$  \\%
&$=$ & $\bigcup\{\val{\varphi'}(S_1, \ldots, S_n)\ |\ f_i(S_i)\subseteq
V(p_{i}), 1\leq i\leq n\}$ \\
\end{tabular}
\end{center}
where the last equality is a consequence of assumption (c). Then $w \in \val{\varphi'}(T_1,\ldots,T_n)$ for some
$T_i\subseteq W$ such that $|T_i|\leq m_i$ and $f_i(T_i)\subseteq V(p_{i})$, $1\leq i\leq n$.
Let $V_2$ be the valuation that maps any $q\in \Prop\setminus\{p_i\mid 1\leq i\leq n\}$ to $\emptyset$ and such that $V_2(p_i)= f_i(T_i)$.
By assumption (d), $V_2\in \mathsf{Val}^k_m(\f)$.
Let us show that
$w \in \val{\varphi}(V_2)$. Indeed,
\begin{center}
\begin{tabular}{r c l }
$w$ & $\in $& $\val{\varphi'}(T_1,\ldots, T_n)$\\
& $\subseteq $& $\bigcup\{\val{\varphi'}(S_1,\ldots, S_n)\ |\ |S_i|\leq m_i \mbox{ and }
f_i(S_i)\subseteq V_2(p_i), 1\leq i\leq n\}$ \\
& $ = $& $\bigcup\{\val{\varphi'}(S_1,\ldots, S_n)\ |\
S_i\subseteq_{m_i}\val{\chi_i}( V_2(p_i)), 1\leq i\leq n\}$ \\
& $ =$ &$ \val{\varphi'}(\val{\chi_1}(V_2(p_{1})),\ldots,
\val{\chi_n}(V_2(p_{n})))$ \\
& $=$ & $ \val{\varphi}(V_2)$.
\end{tabular}
\end{center}
}
By assumption (2), we can conclude that $w\in \val{\psi}(V_2)$. Since
$\val{\psi}$ is order-preserving in every coordinate, and
and $V_2(p) = \bigcup\{f_j(\{z_j\})\mid f_j(\{z_j\})\subseteq V(p)\} \subseteq V(p)$ for every  $p\in \{p_1,\ldots ,p_n\}$, %and\\
%
%3) all the variables occurring in $\psi$ also occur in $\varphi$,\\
%
this implies that $w\in \val{\psi}(V_2)\subseteq \val{\psi}(V)$, as required.
\end{proof}

\paragraph{Syntactic conditions.}

\begin{prop}
\label{prop:simple sahlqvist syntactic cond}
Any Sahlqvist implication $\varphi\to \psi\in \ML$  %and $\psi$ is positive,
%\begin{enumerate}
%\item
%$\varphi \to \psi$ %and $\val{\psi}$
verifies the assumptions of Proposition \ref{semantic on
simple sahl}. In particular, the maps $\val{\chi_i}$ are exactly those induced by the boxed atoms. %the degree of $\val{\varphi'}$ in the $i$th coordinate is the number of occurrences of $p_i$ in $\varphi$,
%and $k = mod(\varphi)$.
%\item For every $i$ there exists $n_i\in \mathbb{N}$, $n_i\leq n=mod(\varphi)$, such that $\s_i = R^{n_i}$.
%\item The Sahlqvist strategy defined in the beginning of thissection can be applied to find a local first-order correspondent of$\varphi\rightarrow\psi$.
%\end{enumerate}
\end{prop}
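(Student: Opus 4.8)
The plan is to parse a Sahlqvist antecedent into the exact shape demanded by condition (a) and then read off conditions (b)--(e) from the order-theoretic facts already at our disposal. By Proposition \ref{Sahlqvist:Antecedent:Prop}, Fact \ref{fact:conjunction and local correspondence} and Proposition \ref{Ind:Eqiv:Definite:Prop}, we may assume without loss of generality that $\varphi$ is a \emph{definite} Sahlqvist antecedent, i.e.\ built up from $\top$, $\bot$, negative formulas and boxed atoms using only $\wedge$ and $\Diamond$. This restriction is what makes Lemma \ref{very simple->complete operator} directly applicable below.

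First I would fix the decomposition. Reading $\varphi$ as a term in $\wedge$ and $\Diamond$ over its leaves, I designate as the $\chi$-formulas precisely the boxed-atom leaves $\Box^{k_i}p_{j_i}$ for $1\le i\le n'$, replacing each such occurrence by a fresh placeholder $q_i$; and I designate as the parameters $\gamma_1,\ldots,\gamma_\ell$ precisely the negative and constant leaves. Substituting back yields a formula $\varphi'$ built from the placeholders and parameters using only $\wedge$ and $\Diamond$, in which each $q_i$ occurs exactly once (distinct leaf occurrences receive distinct placeholders) and in which each $\chi_i=\Box^{k_i}p_{j_i}$ contains exactly one variable. Multiple occurrences of a single variable are absorbed by allowing $n'\ge n$, with several placeholders $q_i$, and hence several $\chi_i$, sharing the same underlying variable $p_{j_i}$. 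This is condition (a).

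Condition (b) then follows from Lemma \ref{very simple->complete operator}: for any parameter tuple $\overline A$, the map $\val{\varphi'(\overline q,\overline A)}$ is built from the placeholders, each occurring once, and the parameters using $\wedge$ and $\Diamond$, so it is $\overline m$-additive with every $m_i=1$, i.e.\ $\overline 1$-additive. For conditions (c) and (d), observe that $\val{\chi_i}=\val{\Box^{k_i}p_{j_i}}=l_R^{\,k_i}$, the $k_i$-fold composite of $l_R$. By Proposition \ref{prop:right adjoints}, $l_R$ is a right adjoint whose left adjoint is $m_{R^{-1}}\colon X\mapsto R[X]$; since a composite of right adjoints is again a right adjoint, with left adjoint the reverse composite of the individual left adjoints, $\val{\chi_i}$ is a right adjoint with left adjoint $f_i=(m_{R^{-1}})^{k_i}$. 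Unwinding the recursion $R^{k+1}[S]=R[R^k[S]]$ shows $f_i\colon X\mapsto R^{k_i}[X]$, which is exactly (d). Finally, condition (e) and the hypothesis on $\psi$ both follow from Proposition \ref{orderpreserving operations}: the $\gamma$-leaves are negative (or constant), hence order-reversing in each coordinate, while $\psi$ is positive, hence $\val{\psi}$ is order-preserving.

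The only genuinely nontrivial point is the adjointness computation behind (c)--(d): one must notice that admitting a left adjoint is preserved under composition, and that the left adjoint of $l_R^{\,k_i}$ is the $k_i$-fold iterate of $X\mapsto R[X]$, namely the reachability map $R^{k_i}[\,\cdot\,]$. It is precisely this relational shape of the $f_i$ that makes the $\mathsf{Val}_2(\f)$-valuations $L_0$-definable and so drives the reduction strategy of Subsection \ref{ssec:sahlqvist:implications}. Everything else is bookkeeping on the syntactic decomposition together with a direct appeal to the order-theoretic lemmas.
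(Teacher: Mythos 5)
Your proposal is correct and follows essentially the same route as the paper, whose proof is a one-line appeal to Lemma \ref{very simple->complete operator} and Proposition \ref{prop:right adjoints} for exactly the decomposition and order-theoretic facts you spell out. The only cosmetic difference is that you obtain the left adjoint of $\val{\Box^{k_i}p_{j_i}}$ by composing adjunctions, whereas the paper invokes the identity $l_{R_2}\circ l_{R_1}=l_{R_1\circ R_2}$ and then applies Proposition \ref{prop:right adjoints}.3 to $R^{k_i}$; the two arguments are interchangeable.
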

\begin{proof}
The statement follows by Lemma %\ref{orderpreserving operations},
\ref{very simple->complete operator} and Proposition \ref{prop:right adjoints},
%For proving 1, we need to show that for every $W$, the semantic box operator $l_R$ is completely meet preserving. Also, the composition of maps that are completely meet preserving is completely meet preserving.\\ For 2, we can use the syntactic condition that for every $i$,
%$\chi_i = \Box^{n_i}p_i$ for some $n_i\in \mathbb{N}$
and the additional fact that,  $l_{R_2}\circ
l_{R_1} = l_{R_1\circ R_2}$ for every $R_1, R_2\subseteq W\times W$. %We can easily prove the third part using 1 and 2.
 \end{proof}

\begin{example}\label{Sahl:Example}
Consider the (definite) Sahlqvist implication $\Diamond \Box p \wedge \Box q \rightarrow \Box \Diamond (p \wedge q)$. This has standard second-order frame equivalent
\[
\forall P \forall Q [(\exists y (Rxy \wedge \forall u (Ryu \rightarrow P(u))) \wedge \forall v (Rxv \rightarrow Q(v))) \rightarrow \forall w(Rxw \rightarrow \exists s (Rws \wedge P(s) \wedge Q(s)))].
\]
The reduction strategy prescribes that in the above we replace $\forall P \forall Q$ with $\forall z_1 \forall z_2$, and substitute $P(y)$ and $Q(y)$ with $\exists v_0 \exists v_1 (v_0 = z_1 \wedge R v_0  v_1 \wedge v_1 = y)$ and $\exists v_0 \exists v_1 (v_0 = z_2 \wedge Rv_0  v_1 \wedge v_1 = y)$, respectively, which simplify to $Rz_1  y$ and $Rz_2  y$, respectively. Doing this we obtain the first-order frame equivalent
\[
\forall z_1 \forall z_2 [(\exists y (Rxy \wedge \forall u (Ryu \rightarrow Rz_1  u)) \wedge \forall v (Rxv \rightarrow Rz_2  v)) \rightarrow \forall w(Rxw \rightarrow \exists s (Rws \wedge z_1Rs \wedge z_2Rs))].
\]
Using the well-known fact that for any first-order formula $\beta(x,y)$ it holds that $\forall x \forall y \beta(x,y) \models \forall x \forall x \beta(x,x)$, we see (by pulling out quantifiers and setting $z_1 = y$ and $z_2 = x$) that the above has as consequence
\[
\forall y \forall w (xRy \wedge xRw \rightarrow \exists s(Rxs \wedge Rys \wedge Rws)).
\]
An easy semantic argument shows that the converse also holds, and hence that the last formula is actually a local first-order frame correspondent for $\Diamond \Box p \wedge \Box q \rightarrow \Box \Diamond (p \wedge q)$.
\end{example}

\subsection{Atomic inductive formulas}\label{Atomic:Ind:Section}

\paragraph{The reduction strategy.} In the case of Sahlqvist implications considered in the previous subsection, the tame valuations of the class $\mathsf{Val}_2$ mapped propositional variables to unions of direct images of singletons under definable {\em binary} relations. In the present subsection we would like to generalize this by defining a class of tame valuations mapping propositional variables to unions of direct images of singletons under  definable relations {\em of any arity} $n\geq 2$. Taking the direct image under an $n$-ary relation requires $n-1$ inputs, so apart from the singletons there will have to be other parameters which need to be tame subsets themselves.  This suggest the idea to consider a class of tame valuations in which each valuation is defined by induction on some well founded order $\Omega$ on atomic propositions. The base of the induction will define tame assignments on the $\Omega$-minimal propositional variables  as in valuations of type $\mathsf{Val}_2$. For the inductive step,  the previously defined variable assignments are used as parameters in the definition of tame assignments for non $\Omega$-minimal propositional variables, using definable relations of arity $n\geq 3$. Working out this idea will require a few definitions and some notation for bookkeeping. We start by introducing these.

For every $i+1$-ary relation $\s_i$ on $W$ and all $X_1,\ldots, X_i\subseteq W$, let

\[\s_i[X_1,\ldots, X_i] := \{y\in W\mid \exists x_1\cdots\exists x_i[\bigwedge_{h =1}^i x_h\in X_h\ \wedge\ \s_i(x_1,\ldots, x_i, y)]\}.\]

\noindent We will abuse notation an write $\s_i[z, X_2,\ldots, X_i]$ for $\s_i[\{z\}, X_2,\ldots, X_i]$. For each $n \in \mathbb{N}^+$, let $\mathsf{SubSeq}(n)$ denote the set of subsequences of the sequence $p_1, p_2, \ldots, p_n$ of propositional variables, ordered by $p_i\Omega p_j$ iff $i\leq j$. Let $\mathsf{SubSeq}(0)$ be the singleton containing the empty sequence $\bm{\epsilon}$. We will denote elements of $\mathsf{SubSeq}(n)$ by ${\bm \rho}$, write $\ell({\bm \rho})$ for the length of ${\bm \rho}$, and ${\bm \rho}^i$ for the $i$-th term of $\bm{\rho}$. Suppose we are given a finite set of triples $Q_i \subseteq W \times \mathsf{SubSeq}(i - 1) \times \mathbb{N}$, for each $1 \leq i \leq n$. Notice that for $i = 1$ the elements of $Q_1$ are of the form $(w,{\bm \epsilon}, k)$. %Note that, for each triple  $(w,\bm{\rho}, k) \in Q_1$,\bm{\rho}
We define $V_3$ inductively as follows:
\begin{enumerate}
\item $V_3(p_1) = \bigcup_{(w,{\bm \epsilon}, k) \in Q_1} \s_{{\bm \epsilon}}^k [w]$, where $\s_{{\bm \epsilon}}^k = R^{k}$ for each $k$; % of $R$ with itself $k_1$ times;
\item for every $1 < i \leq n$, $V_3(p_i) = \bigcup_{(w,\bm{\rho}, k) \in Q_i} \s_{{\bm \rho}}^{k}[w, V_3(\bm{\rho}^1),\ldots, V_3(\bm{\rho}^{\ell(\bm{\rho})})]$, where $\s_{\bm{\rho}}^{k}$ is such that for all $x_0,\ldots, x_{\ell(\bm{\rho})}, y\in W$,
\begin{equation}
\label{eq:L_0 definable i+1 ary rel}
\s_{\bm{\rho}}^{k}(x_0,\ldots, x_{\ell(\bm{\rho})}, y)\
\mbox{ iff }\  (\bigwedge_{0 \leq h < \ell(\bm{\rho})} R x_h x_{h+1}) \wedge R^{k} x_{\ell(\bm{\rho})}  y.
\end{equation}
%\texttt{To sort out the coordinates in the residuation, the coordinate where $w$ (or later $\{z\}$) goes will have to be the $\ell(\rho)$-th coordinate. Hence we would have to redefine $\s_{\bm{\rho}}^{k}$ as follows:
%\[
%\s_{\bm{\rho}}^{k}(x_0,\ldots, x_{\ell(\bm{\rho})}, y)\
%\mbox{ iff }\  x_{\ell({\bm \rho})}Rx_0 \wedge (\bigwedge_{0 \leq h \leq \ell(\bm{\rho}) - 2} x_h Rx_{h+1}) \wedge x_{\ell(\bm{\rho})} R^{k} y.
%\]
%}

\end{enumerate}
%
%where, as before, $R^{k_i}$ is the composition of $R$ with itself $k_i$ times. (
Notice that when $n=1$, the valuations $V_3$ reduce to special valuations $V_2$ of Section \ref{ssec:sahlqvist:implications}.
Suppose that the following were equivalent:
\begin{enumerate}
\item  $\forall V(w\in \val{\varphi}(V) \Rightarrow w\in \val{\psi}(V))$
\item  $\forall V_3(w\in \val{\varphi}(V_3) \Rightarrow w\in \val{\psi}(V_3))$.
\end{enumerate}
This would mean that
\begin{equation}\label{V3:Equation}
\f\models \forall P_1\ldots\forall P_n ST_x(\varphi\rightarrow \psi)[x: = w]\quad
\mbox{ iff }\quad
\f\models \forall P_1^3\ldots\forall P_n^3 ST_x(\varphi\rightarrow
\psi)[x: = w],
\end{equation}
where the variables $P_i^3$ would not range arbitrarily on $\pp(W)$, but only on sets as described in the first enumeration above. In this case, the right-hand side of (\ref{V3:Equation}) can be equivalently transformed into a first-order formula by the following procedure, that we define inductively:
%
%\marginnote{reviewer says:  From here and onwards you are using "$h$" as an integer index which is unusual and which I personally find confusing since you haven't exhausted the "usual" integer indices, $i,j,k,m,n,p,l$. Similarly you are using "running integer names" like $j$ for upper bounds, where the letters $m,n,p$ would be more "normal". The combination of these two unusual notations makes the section more difficult to read than it ought to.}
\begin{enumerate}
\item Replace $\forall P_1^3$ in the prefix with a string of universally quantified fresh variable $\forall z_{{\bm q}}$ for ${\bm q} \in Q_1$, and substitute each subformula of the form $P_1^3 y$ with the  formula $\alpha_{p_1}(y)$ defined to be the disjunction of the formulas $R^{k}z_{\bm q}  y$ for each ${\bm q} = (w, \epsilon, k) \in Q_1$. %Define to be this disjunction.
    %$\exists u_1 \ldots \exists u_{k_1 + 1}(z_1 = u_1 \wedge y = u_{k_1+1} \wedge \bigwedge_{j = 1}^{k_1}u_j R u_{j+1})$.
    %\marginnote{Define $\alpha_1(y)$ here.}
%
\item Suppose that, for each $1\leq h<i$, $\forall P_h^3$ in the prefix has been replaced by a string of universally quantified fresh individual variables, and in the matrix each subformula of the form $P_h^3 y$ has been substituted with the $L_0$-formula $\alpha_{p_h}(y)$. Then, replace $\forall P_i^3$ in the prefix with a string of universally quantified fresh variable $\forall z_{{\bm q}}$ for ${\bm q} \in Q_i$, and substitute each subformula of the form $P_i^3 y$ with the formula $\alpha_{p_i}(y)$ defined to be the disjunction of the following formulas, one for each ${\bm q} = (w, \bm{\rho}, k) \in Q_i$:
    \[
    \exists v_0,\ldots \exists v_{\ell(\bm{\rho})}[z_{\bm q} = v_0 \wedge (\bigwedge_{j = 0}^{\ell(\bm{\rho}) - 1} Rv_j  v_{j+1} \wedge \alpha_{{\bm \rho}_{j+1}}(v_{j+1})) \wedge R^{k}v_{\ell(\bm{\rho})}  y].
    \]
    %
%\marginnote{Change the indexing of $\alpha_{h}$ to $\alpha_{p_h}$, and then, in this formula, $\alpha_{j+1}(v_{j+1})$ becomes $\alpha_{{\bm \rho}(j+1)}(v_{j+1})$}
\end{enumerate}

This time we are after some conditions on $\varphi$ and $\psi$ that guarantee that the universal quantification $\forall V$ can be replaced with the universal quantification  $\forall V_3$.

\paragraph{Order-theoretic conditions.}
The notion of adjunction of monotone maps can be generalized to $j$-ary maps in a component-wise fashion:
a $j$-ary map $f: \p(W)^j \to \p(W)$ is {\em residuated} if there exists a collection of maps
\[
\{g_h: \p(W)^j\to \p(W)\mid 1\leq h \leq j\}
\]
s.t.\ for every $1\leq h\leq j$ and for  all $X_1,\ldots, X_j, Y\in \p(W)$,
\[
f(X_1,\ldots, X_j)\subseteq Y\quad \mbox{ iff }\quad  X_h\subseteq g_h(X_1,\ldots, X_{h-1},Y,X_{h+1},\ldots, X_j).
\]
The map $g_h$ is the {\em $h$-th residual} of $f$. The facts stated in the following example and proposition are well known in the literature in their binary instance (cf.\ \cite[Subsection 3.1.3]{GJKO07}):

\begin{example}
For every $j+1$-ary relation $\s$ on $W$ and every $(X_1,\ldots, X_j)\in \p(W)^j$, let
\[\s_i[X_1,\ldots, X_j] := \{y\in W\mid \exists x_1\cdots\exists x_i[\bigwedge_{h =1}^j x_h\in X_h\ \wedge\ \s_j(x_1,\ldots, x_j, y)]\}.\]
    %\begin{center}
    %\begin{tabular}{c l}
    % &$ \s[X_1,\ldots, X_j]$\\
   % $ := $&$\{y\in W\mid \exists x_1\cdots\exists x_j[\bigwedge_{h = 1}^j x_h\in X_h\ \wedge\ \s(x_1,\ldots, x_j, y)]\}.$
   % \end{tabular}\end{center}
    %
The $j$-ary operation on $\p(W)$ defined by the assignment
\begin{equation} \label{eq: residuated on P}
(X_1,\ldots, X_j)\mapsto \s[X_1,\ldots, X_j]
\end{equation}
is residuated and its $h$-th residual is the map $$g_h: \p(W)^j\to \p(W)$$ which maps every $j$-tuple  $(X_1,\ldots,X_{h-1}, Y, X_{h+1},\ldots, X_j)$  to the set
\[
\{w\in W\mid \alpha_{\s}^h(w)\},
\]
where $\alpha_{\s}^h(w)$ is the following formula:
    \begin{center}
    $\forall x_1\cdots\forall x_j \forall y[(\bigwedge_{k\neq h} x_k\in X_k\ \land \ \s(x_1,\ldots, w,\ldots, x_j, y))\Rightarrow y\in Y].$
    \end{center}
%and for every $j\in \mathbb{N}$ and $1\leq h\leq j$, $\mathbf{j}_h = \{1,\ldots, j\}\setminus \{h\}$.
\end{example}
In the next proposition we collect a few of the many interesting  properties of residuated maps which are relevant for our subsequent exposition.

\begin{prop}
\label{prop:residuated maps}
If $f: \p(W)^j\to \p(W)$ is residuated and $\{g_h: \p(W)^j\to \p(W)\mid 1\leq h\leq j\}$ is the collection of its residuals, then:
\begin{enumerate}
\item $f$ is order-preserving in each coordinate, and for $1\leq h\leq j$, $g_h$ is order-preserving in its $h$-th coordinate;
\item $f$  preserves arbitrary joins in each coordinate; %and for $1\leq h\leq i$, $g_h$   preserves arbitrary meets in its $h$-th coordinate.
\item $f$ coincides with the map defined by the assignment (\ref{eq: residuated on P}), for some $j+1$-ary relation $\s$ on $W$.
%
%\item For any $i+1$-ary relation $\s$ on $W$ and every $1\leq h\leq i$, the $h$-th residual of the residuated map defined by (\ref{eq: residuated on P})
\end{enumerate}
\end{prop}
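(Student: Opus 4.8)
The plan is to prove the three items in order, since item~2 depends on item~1 and item~3 on item~2, and since all of them flow mechanically from the defining biconditional of a residuated map, namely that $f(X_1,\ldots,X_j)\subseteq Y$ iff $X_h\subseteq g_h(X_1,\ldots,X_{h-1},Y,X_{h+1},\ldots,X_j)$, for each $1\leq h\leq j$ and all arguments.

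For item~1, I would run the standard cancellation arguments available from any adjunction-style connection. To see that $f$ is order-preserving in coordinate $h$, suppose $X_h\subseteq X_h'$ with the remaining arguments fixed, put $Y:=f(\ldots,X_h',\ldots)$, and note that $f(\ldots,X_h',\ldots)\subseteq Y$ yields $X_h'\subseteq g_h(\ldots,Y,\ldots)$ by the biconditional; hence $X_h\subseteq X_h'\subseteq g_h(\ldots,Y,\ldots)$, and applying the biconditional in the other direction gives $f(\ldots,X_h,\ldots)\subseteq Y=f(\ldots,X_h',\ldots)$. The monotonicity of $g_h$ in its $h$-th coordinate (the slot occupied by $Y$) is symmetric: from $Y\subseteq Y'$, setting $X_h:=g_h(\ldots,Y,\ldots)$ gives $f(\ldots,X_h,\ldots)\subseteq Y\subseteq Y'$, whence $X_h\subseteq g_h(\ldots,Y',\ldots)$, i.e.\ $g_h(\ldots,Y,\ldots)\subseteq g_h(\ldots,Y',\ldots)$.

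For item~2, I would fix a coordinate $h$ and an arbitrary family $\{X_h^t\}_{t\in T}$ (other arguments fixed), and prove $f(\ldots,\bigcup_t X_h^t,\ldots)=\bigcup_t f(\ldots,X_h^t,\ldots)$. The inclusion $\supseteq$ is immediate from item~1. For $\subseteq$, set $Y:=\bigcup_t f(\ldots,X_h^t,\ldots)$; since each $f(\ldots,X_h^t,\ldots)\subseteq Y$, the biconditional gives $X_h^t\subseteq g_h(\ldots,Y,\ldots)$ for all $t$, hence $\bigcup_t X_h^t\subseteq g_h(\ldots,Y,\ldots)$, and the biconditional returns $f(\ldots,\bigcup_t X_h^t,\ldots)\subseteq Y$. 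Taking the empty family here shows in particular that $f$ sends $\emptyset$ to $\emptyset$ in each coordinate. Then for item~3 I would exploit item~2 to reduce $f$ to its action on singletons: applying coordinatewise join-preservation one coordinate at a time to the decompositions $X_h=\bigcup_{x_h\in X_h}\{x_h\}$ yields $f(X_1,\ldots,X_j)=\bigcup\{f(\{x_1\},\ldots,\{x_j\})\mid x_h\in X_h,\ 1\leq h\leq j\}$. I would define the $(j+1)$-ary relation $\s$ on $W$ by declaring $\s(x_1,\ldots,x_j,y)$ iff $y\in f(\{x_1\},\ldots,\{x_j\})$, and then verify directly that $\s[X_1,\ldots,X_j]$, as defined in the assignment~(\ref{eq: residuated on P}), equals the displayed union and hence equals $f(X_1,\ldots,X_j)$.

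None of these steps is a genuine obstacle: the proposition is simply the powerset instantiation of textbook facts about residuated maps. The only point demanding mild care is the singleton reduction in item~3, where one must invoke join preservation \emph{separately} in each coordinate (precisely what item~2 supplies) and track the degenerate case in which some $X_h=\emptyset$ — there both $f(\ldots,\emptyset,\ldots)$ and $\s[\ldots,\emptyset,\ldots]$ collapse to $\emptyset$, the former by the empty-join instance of item~2 and the latter because no witnessing $x_h$ exists, so the identification $f=\s[\,\cdot\,]$ holds uniformly.
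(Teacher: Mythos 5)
Your proposal is correct and follows essentially the same route as the paper's own proof: the same cancellation argument via ``tautological'' inclusions for item 1, the same back-and-forth through the residuation biconditional for item 2, and the same definition of $\s$ from the action of $f$ on singletons for item 3. Your explicit handling of the empty-set degenerate case in item 3 is a small point of extra care that the paper leaves implicit, but it does not constitute a different approach.
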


\begin{proof}
1. Fix $1\leq h\leq j$, let $X_1,\ldots, X_j, Y, Z \in\p( W)$, and assume that $Y\subseteq Z$. By residuation, the ``tautological'' inclusion
$$f(X_1,\ldots, Z,\ldots, X_j)\subseteq f(X_1,\ldots, Z,\ldots, X_j)$$
is equivalent to the second inclusion in the following chain:
$$Y\subseteq Z\subseteq g_h(X_1,\ldots, f(X_1,\ldots, Z,\ldots, X_j), \ldots, X_j),$$
which yields, again by residuation,
$$f(X_1,\ldots, Y,\ldots, X_j)\subseteq f(X_1,\ldots, Z,\ldots, X_j).$$
The proof that $g_h$ is monotone in the $h$-th coordinate goes likewise.

\commment{
Fix $1\leq k\leq i$, s.t.\ $k\neq h$, let $X_1,\ldots,X_{k-1}, Y, Z,X_{k+1},\ldots, X_i\in\p( W)$, and assume that $Y\subseteq Z$. The following chain of inclusion holds\footnote{For sake of readability, we omit idle coordinates and represent $i$-tuples as $(\ldots[X_h,X_k]\ldots)$, with the active coordinates in display, of course ignoring the order in which they might occur in the tuple.}:
\begin{center}
\begin{tabular}{c l}
& $g_h(\ldots[X_h,Z]\ldots)$\\
$\subseteq$& $g_h(\ldots[f(\ldots[g_h(\ldots[X_h,Z]\ldots), Y]\ldots),Y]\ldots)$\\
$\subseteq$ &
  $g_h(\ldots[f(\ldots[g_h(\ldots[X_h,Z]\ldots), Z]\ldots),Y]\ldots)$\\
$\subseteq$& $ g_h(\ldots[X_h,Y]\ldots ).$\\
\end{tabular}
\end{center}
The first inclusion is obtained by applying residuation to the ``tautological'' inclusion $$f(\ldots[g_h(\ldots[X_h, Z]\ldots), Y]\ldots)\subseteq f(\ldots[
g_h(\ldots[X_h, Z]\ldots), Y]\ldots).$$
The second inclusion follows from $f$ being monotone in the $k$-th coordinate and $g_h$ being monotone in the $h$-th coordinate.
As to the third inclusion, by the monotonicity of $g_h$ it is enough to show that $$f(\ldots [g_h(\ldots [X_h,Z]\ldots), Z] \ldots)\subseteq X_h,$$
which follows by applying residuation to the the ``tautological'' inclusion
$$g_h(\ldots [X_h,Z]\ldots)\subseteq g_h(\ldots [X_h,Z]\ldots).$$
}

2. Let us fix $\y\subseteq \p(W)$, $X_1,\ldots,X_{h-1}, X_{h+1}, \ldots X_j\in \p(W)$ and
us show that $$f(X_1,\ldots,\bigcup \y,\ldots, X_j) = \bigcup \{f(X_1,\ldots,Y,\ldots, X_j)\ |\ Y\in \y\}.$$
The right-to-left inclusion follows by $f$ being order-preserving in each coordinate. By residuation, the converse inclusion is equivalent to
$$\bigcup \y\subseteq
g_h(X_1,\ldots,\bigcup \{f(X_1,\ldots,Y,\ldots, X_j)\ |\ Y\in \y\}, \ldots, X_j),$$
to prove which, the following chain suffices:
\begin{center}
\begin{tabular}{r c l}
$\bigcup \y$&
$\subseteq$& $\bigcup\{g_h(X_1,\ldots,f(X_1,\ldots,Y,\ldots, X_j),\ldots, X_j)\ |\ Y\in \y\}$\\
&$\subseteq$&$g_h(X_1,\ldots,\bigcup \{f(X_1,\ldots,Y,\ldots, X_j)\ |\ Y\in \y\}, \ldots, X_j).$\\
\end{tabular}
\end{center}
The first inclusion readily follows by applying residuation to the  ``tautological'' inclusions
$$f(X_1,\ldots,Y,\ldots, X_j) \subseteq f(X_1,\ldots,Y,\ldots, X_j) $$
for every $Y\in \y$.
The second one follows by the monotonicity of $g_h$ in its $h$-th coordinate.

3. Let $\s\subseteq W^{j+1}$ be defined as follows: for all $y, x_1,\ldots, x_j\in W$,
$$(x_1,\ldots, x_j, y)\in \s\ \mbox{ iff }\ \{y\}\subseteq f(\{x_1\}, \ldots \{x_j\}).$$
Then $f(\{x_1\}, \ldots \{x_j\}) = \s[\{x_1\}, \ldots \{x_j\}]$. Using this observation and the fact that, by item 2 above, every residuated map is completely join-preserving in each argument, it is easy to show that for all $X_1,\ldots, X_j\in \p(W)$,
$$f(X_1,\ldots, X_j) = \s[X_1,\ldots, X_j].$$
\end{proof}
%
%\marginnote{Reviewer says: conditions (a)-(f). I realize that finding a good notation is difficult, but as it stands these conditions are extremely difficult to read and assimilate. In (c) for example, if you plug $i=1$ in the condition, you get $\val{\chi_1}(p_{1_1},...,p_{1_{m_1}},p_1)$ which unless $m_1=0$ is different from $\val{\chi_1}(p_1)$ in (a). Also it is not immediately clear what $k_1,...,k_n$ and $m_1,...,m_n$ are for. The latter are particularly confusing because what we need for each $1\leq i\leq n$ is a subset $\{i_1,...,i_{m_i}\}$ of $\{1,...,n\}$, but $m_1,...,m_n$ only gives us the cardinality of these subsets.
%}
Consider the following conditions on $\phi$: There exist $k_1, \ldots, k_{n'}\in \mathbb{N}$, all bound above by the modal depth of $\phi$, such that
\begin{enumerate}
\item[(a)]$\phi(p_1, \ldots, p_n) = \phi'(\chi_1({\bm \rho}_{1}, p_{j_1})/q_1, \ldots, \chi_{n'}({\bm \rho}_{n'}, p_{j_{n'}})/q_{n'}, \gamma_1, \ldots, \gamma_{\ell})$, where each placeholder variable $q_1,\ldots, q_{n'}$ occurs exactly once in $\phi'$, and ${\bm \rho}_{i}\in \mathsf{SubSeq}(j_i)$ for each $1\leq i\leq n'$;\footnote{If $j_i = 1$, then ${\bm \rho}_1 = {\bm \epsilon}$, and hence $\chi_i({\bm \rho}_{i}, p_{j_i}) = \chi_i(p_{j_i}) = \chi_i(p_{1})$.}
\item[(b)] for any $\ell$-tuple  of parameters $\overline{A}$, the meaning function $\val{\varphi'(\overline{q}, \overline{A})}$ is a
$\overline{1}$-additive map; %  for $n'$-tuple $\overline{m}$ specified below;
\item[(c)] for each $1\leq i\leq n'$, %each variable in $\overline{p}_{j_{i}}$ in  belongs to $\{p_1, \ldots, p_{i-1} \}$, and
the meaning function $\val{\chi_i({\bm \rho}_{i}, p_{j_i})}$ is the $(h_i +1)$-th residual of some  residuated operation $f_i$, where $h_i$ is the length of ${\bm \rho}_{i}$;\footnote{This notation signifies that $\chi_i$ and $f_i$  are residuated in their last coordinates.}
%
%\item[(d)] the binary relation $\s_1$ corresponding to $f_1$ (cf.\ Proposition \ref{prop:residuated maps}.3) is of the form $R^{k_1}$;
%
\item[(d)] for every $1\leq i\leq n'$, the map $f_i$ is given by
    \[
    f_i(X_0, \ldots, X_{h_i - 1}, X_{h_i}) = \s_i[X_{h_i}, X_0, \ldots, X_{h_i - 1}]
    \]
for $\s_i$ an $(h_i + 2)$-ary relation given as follows:  for all $w_0,\ldots, w_{h_i}, v \in W$,
    \[
    \s_i(w_0,\ldots, w_{h_i}, v)\ \mbox{ iff }\  (\bigwedge_{0 \leq j < h_i} Rw_j w_{j+1}) \wedge R^{k_i}w_{h_i}  v.
    \]
    The number $k_i$ will be referred to as the \emph{$R$-number of $f_i$}.
\item[(e)] $\val{\gamma_1},\ldots,\val{\gamma_{\ell}}$ are order-reversing in each coordinate.
%
%\item[(g)] for each $1\leq i\leq n'$, the value $m_{i}$ is the number of placeholder variables $q$ in $\phi'$ which are substituted in $\phi$ for those $\chi$-formulas such that $p_{j_{i}}$ is the variable corresponding to the coordinate in which they are residuated.\marginnote{my god... this is gory... suggestions are badly needed}
\end{enumerate}

Notice that condition (c) on the $\chi$-formulas in the previous subsection is exactly  condition (c) above restricted to unary maps. In  condition (d) the last coordinate of the map $f_i$ becomes the first coordinate of the the direct image of the relation $\s_i$. This is necessitated by the need to respect two established notational conventions. Firstly, the convention governing the order of coordinates in residuals as described above and, secondly, the convention regarding the order of coordinates in direct images of relations.

%\marginnote{I added this observation in the hope of making it intuitively better at least...}
%The last condition looks more complicated than it will turn out to be in practice. This is because, just as any similar proposition treated in the previous subsections, the proposition below applies in fact to any modal signature, and not just to the specific setting of $\ML$ which concerns the present paper. In the next stage, it will be clear that the $\chi$-formulas boil down to box-formulas in the specific setting of $\ML$, and the value $m_i$ of the placeholder variable $q_i$ boils down to the number of box-formulas in $\phi$ having $p_{j_i}$ as their head.

For every frame $\f$ and  formula $\phi(p_1, \ldots, p_n)$ satisfying conditions (a)-(e), let $\mathsf{Val}_{3}$ be the set of valuations $V_3$ on $\f$ which map any $q\in \Prop \setminus \{ p_1, \ldots, p_n \}$ to $\emptyset$ and are defined inductively on $\{p_1, \ldots, p_n\}$ as follows:

\begin{enumerate}
\item $V_3(p_1) = \bigcup_{(z,\bm{\epsilon}, k) \in Q_1} \s_{{\bm \epsilon}}^{k} [z]$, where $Q_1\subseteq_m W\times \mathsf{SubSeq(0)}\times \mathbb{N}$ and $m$ is the maximum number of times that a variable occurs in $\phi$, and moreover  $\s_{{\bm \epsilon}}^{k} = R^{k}$ for some $k\in \mathbb{N}$ uniformly bounded by the modal depth of $\phi$; % of $R$ with itself $k_1$ times;
\item for every $1 < i \leq n$, $V_3(p_i) = \bigcup_{(z,\bm{\rho}, k) \in Q_i} \s_{{\bm \rho}}^{k}[z, V_3(\bm{\rho}^1),\ldots, V_3(\bm{\rho}^{\ell(\bm{\rho})})]$, where $Q_i\subseteq_m W\times \mathsf{SubSeq(i-1)}\times \mathbb{N}$, and $m$ is the maximum number of times that a variable occurs in $\phi$, and moreover $\s_{\bm{\rho}}^{k}$ is defined for some $k\in \mathbb{N}$ uniformly bounded by the modal depth of $\phi$ as in item (d) above, i.e.\ for all $x_0,\ldots, x_{\ell(\bm{\rho})}, y\in W$,

\begin{equation}
\label{eq:L_0 definable i+1 ary rel}
\s_{\bm{\rho}}^{k}(x_0,\ldots, x_{\ell(\bm{\rho})}, y)\
\mbox{ iff }\  (\bigwedge_{0 \leq h < \ell(\bm{\rho})} Rx_h x_{h+1}) \wedge R^{k}x_{\ell(\bm{\rho})}  y.
\end{equation}
\end{enumerate}

\begin{prop}
\label{linear ind}Let $\varphi\to \psi\in \ML$ be such that $\varphi$
verifies the conditions (a)-(e) above and $\val{\psi}$ is order-preserving in each coordinate. Then the following are equivalent:
\begin{enumerate}
\item  $(\forall V\in \mathsf{Val}(\f))[w\in \val{\varphi}(V) \Rightarrow w\in \val{\psi}(V)]$
\item  $(\forall V_3\in \mathsf{Val}_{3}(\f))[w\in \val{\varphi}(V_3) \Rightarrow w\in \val{\psi}(V_3)]$.
\end{enumerate}
\end{prop}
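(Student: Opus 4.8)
The plan is to follow the blueprint of the proof of Proposition~\ref{semantic on simple sahl}, the only genuinely new ingredient being that the ``minimal valuation'' must now be built by recursion along the well-founded order $\Omega$ rather than in one step. The implication $(1\Rightarrow 2)$ is immediate, since $\mathsf{Val}_3(\f)\subseteq \mathsf{Val}(\f)$. For $(2\Rightarrow 1)$, I would fix $V\in\mathsf{Val}(\f)$ and $w\in W$ with $w\in\val{\varphi}(V)$; the goal is to manufacture a valuation $V_3\in\mathsf{Val}_3(\f)$ with $V_3(p)\subseteq V(p)$ for every $p$ and with $w\in\val{\varphi}(V_3)$, after which assumption~(2) gives $w\in\val{\psi}(V_3)$, and the order-preservation of $\val{\psi}$ together with $V_3\le V$ yields $w\in\val{\psi}(V_3)\subseteq\val{\psi}(V)$.

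First I would extract the witnesses. Writing $V(\bm{\rho}_i)$ for the tuple of values $V(\bm{\rho}_i^1),\dots,V(\bm{\rho}_i^{h_i})$, condition~(a) gives $w\in\val{\varphi}(V)=\val{\varphi'}\big(\val{\chi_1}(V(\bm{\rho}_1),V(p_{j_1})),\dots,\val{\chi_{n'}}(V(\bm{\rho}_{n'}),V(p_{j_{n'}})),\val{\gamma_1}(V),\dots,\val{\gamma_\ell}(V)\big)$. Since by condition~(b) $\val{\varphi'}$ is $\overline 1$-additive in the $q$-coordinates, Lemma~\ref{Op:Composition:Props:Lemma} and complete join-preservation allow me to replace each argument $\val{\chi_i}(\cdots)$ by a singleton: there are $z_1,\dots,z_{n'}\in W$ with $z_i\in\val{\chi_i}(V(\bm{\rho}_i),V(p_{j_i}))$ and $w\in\val{\varphi'}(\{z_1\},\dots,\{z_{n'}\},\val{\gamma_1}(V),\dots,\val{\gamma_\ell}(V))$. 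By condition~(c), $\val{\chi_i}$ is the residual of $f_i$ in its last coordinate, so each such membership is equivalent, via residuation, to the inclusion $f_i(V(\bm{\rho}_i),\{z_i\})\subseteq V(p_{j_i})$.

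Next I would define $V_3$ by recursion on $\Omega$: setting $I_t=\{i: j_i=t\}$, let $V_3(q)=\emptyset$ for $q\notin\{p_1,\dots,p_n\}$ and $V_3(p_t)=\bigcup_{i\in I_t} f_i\big(V_3(\bm{\rho}_i),\{z_i\}\big)$. This recursion is legitimate precisely because the dependency digraph is acyclic: each variable of $\bm{\rho}_i$ is $\Omega$-strictly below its head $p_{j_i}=p_t$, so $V_3(p_t)$ refers only to values already defined. Condition~(d) shows each summand has exactly the shape $\s_i^{k_i}[z_i,V_3(\bm{\rho}_i^1),\dots,V_3(\bm{\rho}_i^{h_i})]$ with $k_i$ bounded by the modal depth, while $|I_t|$ is bounded by the number of occurrences of $p_t$ in $\phi$; hence $V_3\in\mathsf{Val}_3(\f)$. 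A first induction on $\Omega$ then yields $V_3\le V$: for $p_t$, the inductive hypothesis gives $V_3(\bm{\rho}_i)\subseteq V(\bm{\rho}_i)$ componentwise, monotonicity of $f_i$ (Proposition~\ref{prop:residuated maps}(1)) gives $f_i(V_3(\bm{\rho}_i),\{z_i\})\subseteq f_i(V(\bm{\rho}_i),\{z_i\})$, and the residuation inclusion from the previous step bounds this by $V(p_{j_i})=V(p_t)$.

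Finally I would verify $w\in\val{\varphi}(V_3)$. By the very definition of $V_3$ we have $f_i(V_3(\bm{\rho}_i),\{z_i\})\subseteq V_3(p_{j_i})$, so residuation returns $z_i\in\val{\chi_i}(V_3(\bm{\rho}_i),V_3(p_{j_i}))$ for every $i$. Since $V_3\le V$ and the $\gamma$s are order-reversing (condition~(e)) while $\val{\varphi'}$ is order-preserving, $w\in\val{\varphi'}(\{z_1\},\dots,\{z_{n'}\},\val{\gamma_1}(V),\dots,\val{\gamma_\ell}(V))\subseteq\val{\varphi'}(\{z_1\},\dots,\{z_{n'}\},\val{\gamma_1}(V_3),\dots,\val{\gamma_\ell}(V_3))$; feeding the memberships $z_i\in\val{\chi_i}(\cdots)$ back through the $\overline 1$-additivity of $\val{\varphi'}$ collapses this to $\val{\varphi}(V_3)$, so that $w\in\val{\varphi}(V_3)$, as needed. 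The main obstacle is not any single computation but the correct handling of the recursion: one must confirm that acyclicity makes the definition of $V_3$ well-founded, and that the two inductions along $\Omega$ (for $V_3\le V$, and for the memberships $z_i\in\val{\chi_i}(V_3(\bm{\rho}_i),V_3(p_{j_i}))$) run consistently, relying on the fact that each residual inclusion is preserved when its parameters are shrunk from $V$ down to $V_3$.
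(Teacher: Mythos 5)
Your proposal is correct and follows essentially the same route as the paper's proof: extract singleton witnesses $z_i$ via the $\overline{1}$-additivity of $\val{\varphi'}$, convert the memberships $z_i\in\val{\chi_i}(\cdot)$ into the residuation inclusions $f_i(V({\bm\rho}_i),\{z_i\})\subseteq V(p_{j_i})$, build the tame valuation $V_3$ by recursion along the acyclic dependency order as a union of sets $f_i(V_3({\bm\rho}_i),\{z_i\})$, and then run the same chain of inclusions back to get $w\in\val{\varphi}(V_3)$ before applying hypothesis (2) and the monotonicity of $\val{\psi}$. The only (harmless) deviations are that you index the union defining $V_3(p_t)$ by the heads $I_t$ rather than by the paper's inclusion guard, and that you spell out the $\Omega$-induction for $V_3\le V$ which the paper compresses into ``by definition.''
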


\begin{proof}
(1 $\Rightarrow$ 2) Clear. (2 $\Rightarrow$ 1) Let  $V \in \mathsf{Val}(\f)$ and $w\in W$ s.t.\  $w \in \val{\varphi}(V)$. Hence, $$\emptyset \neq  \val{\varphi}(V) =  \val{\phi'}(\val{\chi_1}(V), \ldots, \val{\chi_{n'}}(V), \val{\gamma_1}(V), \ldots, \val{\gamma_{\ell}}(V)).$$ By assumption (b) and  Lemma \ref{Op:Composition:Props:Lemma}, this implies that
$\val{\chi_i}(V)\neq \emptyset$ for every  $1\leq i\leq n'$, and
moreover, %because every set is the union of the singletons of its elements and complete operators preserve arbitrary unions in each coordinate,
the following chain of equalities holds (in what follows, for any ${\bm \rho}$ we let $V({\bm \rho})$ denote the tuple  $(V({\bm \rho}^{1}),\ldots,V({\bm \rho}^{\ell(\rho)})$):
\begin{center}
\begin{tabular}{r c l}
$w$ &$\in$ &$\val{\phi'}(\val{\chi_1}(V), \ldots, \val{\chi_{n'}}(V), \val{\gamma_1}(V), \ldots, \val{\gamma_{\ell}}(V))$\\
&$=$ &$\bigcup \{ \val{\phi'}(\{x_1\}, \ldots, \{x_{n'}\}, \val{\gamma_1}(V), \ldots, \val{\gamma_{\ell}}(V)) \mid [x_i \in \val{\chi_i}(V)]_{i = 1}^{n'}\}$\\
&$=$ &$\bigcup \{ \val{\phi'}(\{x_1\}, \ldots, \{x_{n'}\}, \val{\gamma_1}(V), \ldots, \val{\gamma_{\ell}}(V)) \mid [f_i(V({\bm \rho}_i), \{x_i\}) \subseteq V(p_{j_i})]_{i = 1}^{n'} \}$\\
\end{tabular}
\end{center}
where the first equality follows from assumption (b) and the second from assumption (c).
Hence, $w \in \val{\phi'}(\{z_1\}, \ldots, \{z_{n'}\}, \val{\gamma_1}(V), \ldots, \val{\gamma_{\ell}}(V))$ for some $z_1,\ldots z_{n'}\in W$ such that
\begin{equation}\label{eq:1}
f_i(V({\bm \rho}_i), \{z_i\}) \subseteq V(p_{j_i})\quad \mbox{ for each } 1 \leq i \leq n'.
\end{equation}

Let $V_3$ be the valuation that maps any $q\in \Prop\setminus\{p_1,\ldots, p_n\}$ to $\emptyset$ and is defined inductively on $\{p_1,\ldots,p_n\}$ as follows:
\begin{itemize}
\item $V_3(p_1) = \bigcup\{ f_j(\{z_j\})\mid f_j(\{z_j\})\subseteq V(p_1)\}$, and
\item for $1<i\leq n$,  $ V_3(p_i)= \bigcup\{ f_j(V_3({\bm \rho}_j), \{z_{j}\})\mid f_j(V({\bm \rho}_j), \{z_j\})\subseteq V(p_i)\}$.
\end{itemize}
We set $Q_1 = \{ (z_j, {\bm \epsilon}, k_j) \mid f_j(\{z_j\})\subseteq V(p_1) \mbox{ and $k_j$ the $R$-number of $f_j$} \}$ and, for $1<i\leq n$, we set $Q_i = \{ (z_j, {\bm \rho}_j, k_j) \mid f_j(V({\bm \rho}_j), \{z_j\})\subseteq V(p_i) \mbox{ and $k_j$ the $R$-number of $f_j$} \}$. By assumption (d),
\[
f_j(V_3({\bm \rho}_j), \{z_{j}\}) = \s_j[z_j, V_3(\bm{\rho}_j^1),\ldots, V_3(\bm{\rho}_j^{\ell(\bm{\rho}_j)})],\]
and hence  $V_3 \in \mathsf{Val}_{3}$. Moreover, by definition  $V_3(p) \subseteq V(p)$, for all $p\in \mathsf{Prop}$.
Also,
\begin{equation}\label{eq:2}
f_i(V_3({\bm \rho_i}),\{z_i\})\subseteq V_3(p_{j_i})\quad \mbox{ for each } 1\leq i\leq n'.\end{equation}
%If $j_i = 1$, then ${\bm \rho_i} = {\bm \epsilon}$ and we are reduced to show that $f_i(\{z_i\})\subseteq V_3(p_{1})$.
To see this, by definition of $V_3(p_{j_i})$, it is enough to show that $f_i(V({\bm \rho_i}),\{z_i\})\subseteq V(p_{j_i})$, which is true by \eqref{eq:1}.
Let us show that $w \in \val{\varphi}(V_3)$:
\begin{center}
\begin{tabular}{r  l l}
 $w\in $& $\val{\varphi'}(\{z_1\},\ldots, \{z_{n'}\}, \val{\gamma_1}(V), \ldots, \val{\gamma_{\ell}}(V))$\\
 $\subseteq $& $\bigcup\{\val{\varphi'}(\{x_1\},\ldots, \{x_{n'}\}, \val{\gamma_1}(V), \ldots, \val{\gamma_{\ell}}(V))\ |\
[f_i(V_3({\bm \rho_i}),\{x_i\})\subseteq V_3(p_{j_i})]_{i=1}^{n'}\}$ & \eqref{eq:2}\\
 $\subseteq $& $\bigcup\{\val{\varphi'}(\{x_1\},\ldots, \{x_{n'}\}, \val{\gamma_1}(V_3), \ldots, \val{\gamma_{\ell}}(V_3))\ |\
[f_i(V_3({\bm \rho_i}),\{x_i\})\subseteq V_3(p_{j_i})]_{i=1}^{n'}\}$ & assumption (e)\\
 $= $& $\bigcup\{\val{\varphi'}(\{x_1\},\ldots, \{x_{n'}\}, \val{\gamma_1}(V_3), \ldots, \val{\gamma_{\ell}}(V_3))\ |\
[x_i\in \val{\chi_i}(V_3)]_{i=1}^{n'}\}$ &  assumption (c)\\

%& $ =$ &$ \val{\varphi'}(\bigcup\{\{x_i\}\ |\ f_i(\{x_i\})\subseteq V_2(p_i)\})_{i=1}^n$\\
%
%& $ =$ &$ \val{\varphi'}(\bigcup\{\{x_i\}\ |\ \{x_i\}\subseteq \val{\chi_i}(V_2(p_i))\})_{i=1}^n$\\
%
 $=$ & $ \val{\varphi'}(\val{\chi_1}(V_3),\ldots, \val{\chi_{n'}}(V_3), \val{\gamma_1}(V_3), \ldots, \val{\gamma_{\ell}}(V_3))$ & assumption (b)\\
 $=$ & $ \val{\varphi}(V_3)$.
\end{tabular}
\end{center}
%

%The first inclusion above holds  by assumption (f), %$\val{\phi'}$ is order-preserving, and  $V_3(p_i) \subseteq V(p_i)$ for all $1 \leq i \leq n$.
%the second inclusion holds because of \eqref{eq:1}, and the first and second equalities hold by assumption (c) and (b) respectively.

By assumption (2), we can conclude   that $w\in \val{\psi}(V_3)$.  Since  $\val{\psi}$ is order-preserving in each coordinate and  $V_3(p) \subseteq V(p)$ for all $p\in \mathsf{Prop}$, this implies  that  $w\in \val{\psi}(V)$, as required.
\end{proof}
\paragraph{Syntactic conditions.}

\begin{prop}
Any definite atomic inductive implication $\varphi\to \psi\in \ML$  %and $\psi$ is positive,
%\begin{enumerate}
%\item
%$\varphi \to \psi$ %and $\val{\psi}$
verifies the assumptions of Proposition \ref{linear ind}. In particular, the maps $\val{\chi_i}$ are exactly those induced by the atomic box-formulas. %the degree of $\val{\varphi'}$ in the $i$th coordinate is the
%If $\varphi$ is a definite atomic inductive antecedent, then $\val{\varphi}$  verifies the hypotheses (a) to (f) of Proposition \ref{linear ind}. In particular, the maps $\val{\chi_i}$ are exactly the ones induced by the atomic box-formulas.
\end{prop}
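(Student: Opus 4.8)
The plan is to verify conditions (a)--(e) of Proposition \ref{linear ind} for a definite atomic inductive implication $\varphi\to\psi$, following the same three-part template used for Sahlqvist implications in Proposition \ref{prop:simple sahlqvist syntactic cond}, but replacing ``right adjoint'' by ``last residual of a residuated map''. First I would decompose the antecedent. Since $\varphi$ is a \emph{definite} atomic inductive antecedent, it is built from $\top$, $\bot$, negative formulas and atomic box-formulas using only $\wedge$ and $\Diamond$. I would abstract each occurrence of an atomic box-formula $\chi_i(\bm{\rho}_i,p_{j_i})$ by a fresh placeholder $q_i$, so that the resulting skeleton $\varphi'$ contains each $q_i$ exactly once and is built from the $q_i$ and the remaining material using only $\wedge$ and $\Diamond$; the negative formulas together with the constants $\top,\bot$ are collected as the parameters $\gamma_1,\dots,\gamma_\ell$. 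This is precisely the shape required by (a). Here $p_{j_i}$ is the head of the box-formula and $\bm{\rho}_i$ lists its inessential variables in nesting order; acyclicity of the dependency digraph (whose transitive closure is a strict partial order) lets me relabel $p_1,\dots,p_n$ so that every inessential variable has strictly smaller index than the head it points to, which is exactly what makes each $\bm{\rho}_i$ an element of $\mathsf{SubSeq}(j_i)$ and, later, what makes the inductive definition of the valuations in $\mathsf{Val}_3$ well founded.

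Conditions (b), (e) and the hypothesis on $\psi$ are the easy part and reuse machinery already in place. For (b), the skeleton $\varphi'(\overline q,\overline A)$ is built from the placeholders $\overline q$ (each occurring once) and parameters using only $\wedge$ and $\Diamond$, so Lemma \ref{very simple->complete operator} applied with all multiplicities equal to $1$ gives that $\val{\varphi'(\overline q,\overline A)}$ is $\overline 1$-additive, for every tuple of parameters $\overline A$. For (e), each $\gamma_r$ is either a negative formula or a constant, hence uniform and negative (vacuously so for constants), and Proposition \ref{orderpreserving operations} yields that $\val{\gamma_r}$ is order-reversing in each coordinate. Finally, $\psi$ is positive by definition of an atomic inductive implication, so the same proposition gives that $\val{\psi}$ is order-preserving in each coordinate, as Proposition \ref{linear ind} requires.

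The heart of the argument is (c)--(d): that the meaning function of each atomic box-formula $\chi_i(\bm{\rho}_i,p_{j_i})$ is the $(h_i+1)$-th residual of the residuated map $f_i=\s_i[\,\cdot\,]$ with $\s_i$ the ``$h_i$-step chain followed by $R^{k_i}$'' relation of (d). I would prove this by induction on the atomic box-form, or equivalently by directly checking the defining residuation biconditional
\[
f_i(A_1,\dots,A_{h_i},X)\subseteq Y \quad\Longleftrightarrow\quad X\subseteq \val{\chi_i}(A_1,\dots,A_{h_i},Y).
\]
Unfolding the left-hand side, $v\in f_i(A_1,\dots,A_{h_i},X)=\s_i[X,A_1,\dots,A_{h_i}]$ says there is an $R$-path $u_0\,R\,u_1\cdots R\,u_{h_i}$ with $u_0\in X$, $u_j\in A_j$, ending with $R^{k_i}u_{h_i}v$; requiring all such $v$ to lie in $Y$ is, by currying the existential over $u_0\in X$ into a universal, exactly the statement that every $u_0\in X$ satisfies the nested-box condition defining $\val{\chi_i}(A_1,\dots,A_{h_i},Y)$. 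The base case $\Box^{k}p$ is Proposition \ref{prop:right adjoints}, whose left adjoint is $X\mapsto R^{k}[X]$; the inductive step peels off one layer $\Box(q\to(-))$ using the elementary identity $X\subseteq l_R((W\setminus A)\cup Z)\iff R[X]\cap A\subseteq Z$. That $f_i$ is genuinely residuated (so that speaking of its residual is legitimate) follows because direct-image maps of relations are residuated, as recorded in the Example preceding Proposition \ref{prop:residuated maps}, and uniqueness of residuals identifies $\val{\chi_i}$ with the $(h_i+1)$-th residual. Proposition \ref{prop:residuated maps}(3) then certifies that $f_i$ has the direct-image shape demanded by (d), and the $k_i$ and the lengths $h_i$ are bounded by the modal depth of $\varphi$ since they are read off nested subformulas of $\chi_i$.

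The main obstacle I anticipate is bookkeeping rather than conceptual: pinning down the residuation identity of (c)--(d) with the correct coordinate conventions --- note the deliberate reordering in (d) whereby the residuated (head) coordinate becomes the first argument of $\s_i[\,\cdot\,]$ --- and making sure the order in which inessential variables are threaded along the chain relation $\s_i$ matches both the nesting order inside $\chi_i$ and the order recorded in $\bm{\rho}_i$. Getting the acyclicity-to-subsequence step and these index alignments exactly right is where the proof must be careful; the order-theoretic content itself is entirely contained in the residuation computation and in the two reused propositions.
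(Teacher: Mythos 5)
Your proposal is correct and follows essentially the same route as the paper: the same decomposition into skeleton, box-formulas and negative parts, the same appeal to Lemma \ref{very simple->complete operator} and Proposition \ref{orderpreserving operations} for conditions (b) and (e), the same use of acyclicity to linearly order the variables, and the same residuation computation for (c)--(d). The only cosmetic difference is that you verify the residuation biconditional by directly unfolding both sides into path statements, whereas the paper runs the induction on the number of inessential variables (peeling off one $\Box(p\rightarrow -)$ layer via the identity $X\subseteq l_R(U\Rightarrow Z)$ iff $U\cap R[X]\subseteq Z$), which you also sketch as an alternative.
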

\begin{proof}
As far as conditions (a) and (b) are concerned, we note the following. If $\varphi$ is a definite atomic inductive antecedent, then the atomic box-formulas and negative formulas used in its construction correspond to   $\chi_1, \ldots \chi_{n'}$ and $\gamma_1, \ldots, \gamma_{\ell}$, respectively, in
\[
\phi'(\chi_1({\bm \rho}_{1}, p_{j_1})/q_1, \ldots, \chi_{n'}({\bm \rho}_{n'}, p_{j_{n'}})/q_{n'}, \gamma_1, \ldots, \gamma_{\ell})
\]
The `skeleton' consisting of the composition of $\Diamond$s and $\wedge$s used in the construction of $\phi$ corresponds to the $\overline{1}$-additive map $\val{\phi'}$ by Lemma \ref{very simple->complete operator}.

As for conditions (c), (d) and (e),  since the dependency digraph of $\varphi$ is acyclic, its transitive closure is a strict partial order. We can therefore assume, without loss of generality, that the variables are ordered  by some linear extension of the partial order $p_1 < p_2 < \cdots < p_n$. Hence we will have that ${\bm \rho}_i \in \mathsf{SubSeq}(j_i-1)$ in each $\val{\chi_i}({\bm \rho}_{i}, p_{j_i})$.  By Lemma \ref{very simple->complete operator},  and Propositions \ref{prop:right adjoints} and \ref{prop:simple sahlqvist syntactic cond}, in order to complete the proof, it is enough to show that for every atomic box-formula
\[
\chi_i({\bm \rho}_i, p_{j_i}) = \Box({\bm \rho}_i^1 \rightarrow \Box({\bm \rho}_i^2\rightarrow \ldots
\Box({\bm \rho}_i^{h_i} \rightarrow \Box^k p_{j_i})\ldots)),
\]
where $h_i$ is the length of ${\bm \rho}_i$, the associated meaning function $\val{\chi_i}$ is the $(h_i + 1)$-th residual of the map $f_i:\p(W)^{h_i + 1}\to \p(W)$ defined as
\[
f(X_{1},\ldots, X_{h_i}, Y) = R^{k_{i}}[X_{h_i}\cap R[ X_{h_i-1}\cap \cdots R[X_1\cap R[Y]\cdots]]].
\]

By induction on the number of inessential variables in $\chi_i$. If there are no inessential variables, then  $\chi_i$ is a boxed atom $\Box^k p$, and the proof is analogous to that of Proposition \ref{prop:simple sahlqvist syntactic cond}. As to the induction step,
let $\chi_i(p, \overline{q}, r) = \Box(p\rightarrow \chi'(\overline{q}, r))$, and let $f'$ be the residual of $\val{\chi'}(\overline{q}, r)$ in the last coordinate. Then for all $X, U, Y$ and $\overline{Z}$,
\[X\subseteq \val{\chi_i}(U, \overline{Z}, Y)\quad \mbox{ iff }\quad X\subseteq l_{R}(U\Rightarrow \val{\chi'}(\overline{Z}, Y)) \quad\mbox{ iff } \quad f'(\overline{Z}, U\cap m_{R^{-1}}(X))\subseteq Y,\]
where $S\Rightarrow V: = -_W S\cup V$.
Then \[f_i(U, \overline{Z}, X)= f'(\overline{Z}, U\cap m_{R^{-1}}(X)) = f'(\overline{Z}, U\cap R[X]),\] and the statement follows by applying the induction hypothesis  to $f'$.
%The details of the proof are left to the reader.
\end{proof}

\begin{example}\label{Atom:Ind:Example}
Let us consider the atomic inductive formula
\[
\phi: = p \wedge \Box(p \to q)  \to \Diamond  q,
\]
which locally corresponds to the property of being a reflexive state. The dependency digraph induces the order $p < q$ on the variables. We have $k_1 = 0$, $m_1 = 0$, $k_2 = 0$ and $m_2 = 1$. The standard local second-order translation is
\[
\forall P \forall Q [P(x) \wedge \forall y (Rxy \rightarrow (P(y) \rightarrow Q(y))) \rightarrow \exists u (Rxu \wedge Q(u))].
\]
The reduction strategy prescribes that we replace $\forall P \forall Q$ in the prefix with $\forall z_1 \forall z_2$ and that we substitute occurrences of the form $P(y)$ with $\alpha_1(y) := \exists u_1(z_1 = u_1 \wedge y = u_1)$ which is equivalent to $y = z_1$. It further prescribes that occurrences of the form $Q(y)$ should be substituted with $\alpha_2(y) := \exists v_0 \exists v_1 (z_2 = v_0 \wedge Rv_0v_1 \wedge v_1 = z_1 \wedge v_1  = y)$, where we have already used the simplified version of $\alpha_1$. Now $\alpha_2(y)$ can be further simplified to $Rz_2 z_1 \wedge z_1  = y$.
Doing the substitution we obtain
\[
\forall z_1 \forall z_2 [x = z_1 \wedge \forall y (Rxy \rightarrow (y = z_1 \rightarrow Rz_2  z_1 \wedge y = z_1 )) \rightarrow \exists u (Rxu \wedge Rz_2  z_1 \wedge u = z_1)].
\]
This is equivalent to
\begin{eqnarray}
&&\forall z_1 \forall z_2 [\forall y (Rxy \rightarrow (y = x \rightarrow  Rz_2  x)) \rightarrow \exists u (Rxu \wedge Rz_2  x \wedge u = x)]\nonumber\\
&\equiv &\forall z_1 \forall z_2 [(Rxx \rightarrow Rz_2  x) \rightarrow \exists u (Rxx \wedge Rz_2  x)]\nonumber\\
&\equiv &Rxx.\nonumber
\end{eqnarray}
\end{example}

\begin{example}
If atomic inductive formulas seem somewhat esoteric, it might be worth noting that there is at least one hiding in plain sight. Indeed, when we rewrite the K axiom $\Box(p\rightarrow q) \rightarrow (\Box p \rightarrow \Box q)$  as $(\Box(p\rightarrow q)\wedge \Box p)\rightarrow \Box q$, we recognize it as an atomic inductive formula where $p <_{\Omega} q$. Since it is valid on all Kripke frames, applying the reduction strategy of course simply produces a first-order validity. Interestingly, there are  relational semantic frameworks even more general than Kripke frames (see e.g.~\cite{Kr65}) in which the K axiom can be shown to still correspond to a validity via a Sahlqvist reduction procedure (cf.\ \cite[Section 8.2]{PaSoZh14a}).

Inductive formulas also pop up in non-classical settings like intuitionistic (modal logic). The Frege axiom $(p\to (q\to r)) \to ((p\to q)\to (p\to r))$ is an inductive formula, although not atomic inductive. In the relational semantics for intuitionistic logic given by pre-orders or posets the Frege Axiom correspondence to a first-order validity. However, when we intepret intuitionistic logic in ternary frames (see e.g.\ \cite{routley-meyer}), the Frege axiom and is no longer valid and corresponds to an informative first-oder condition on these frames.

\end{example}

\section{A glimpse at what lays beyond}\label{sec:glimpse}

In the previous section, we have seen that the order-theoretic properties of $\overline{m}$-additivity, adjunction and residuation form the driving engine of the correspondence phenomenon. This puts us in a position to make a meaningful connection with the general theoretical framework, known as {\em unified correspondence}, simultaneously accounting  for correspondence and canonicity results for wide classes of non-classical logics. The main tools of unified correspondence are: (a) a general definition of Sahlqvist and inductive terms or inequalities which applies uniformly across logical signatures; (b) the calculus for correspondence ALBA, computing the first-order correspondents of input formulas and inequalities in any signature, and guaranteed to be successful on inductive formulas/inequalities in any signature.

As to (a), the definition of `Sahlqvist shape' in any logical signature is given in terms of the order-theoretic properties of the algebraic interpretations of the logical connectives of the given signature. The possibility of providing such a definition is precisely due to the order-theoretic insights that we have illustrated in the previous section, which, in fact, in our presentation are formulated {\em independently} of the specific signature of $\ML$.

As to (b),  ALBA is a syntactic environment (consisting of a propositional language expanding the given signature and a set of rules in the style of proof-theoretic calculi) in which it is possible to encode and automate the metatheoretic reasoning illustrated in the proofs of Propositions \ref{prop:very simple}, \ref{semantic on simple sahl} and \ref{linear ind}. The expanded language includes dedicated variables $\nomj, \nomi$ (called {\em nominals}) and $\cnomm, \cnomn$ (called {\em conominals}) respectively ranging over the atoms (i.e.\ the singleton sets) and co-atoms (i.e.\ the complements of singletons) of the complex algebra of any Kripke frame, and also the adjoints and residuals of every connective in the original signature (hence, in the case of $\ML$, the expanded language includes a new diamond-type connective $\Diamondblack$ the algebraic interpretation of which is $m_{R^{-1}}$, the left adjoint of the semantic interpretation of $\Box$). The expanded language contains all the ingredients needed to express tame valuations as {\em term functions} of a propositional language, rather than by means of first-order formulas. We refer to the bibliography mentioned in the introduction for an exhaustive account of this theory. To try and give an impression of how ALBA works, while at the same time showing the relation between ALBA-reductions and e.g.\  the proof of Proposition \ref{linear ind}, let us consider the subclass of atomic inductive implications on two variables of the following shape: %Consider again the following $\ML$-formula from Example \ref{Atom:Ind:Example}:
\[\phi'(\chi_1(p)/q_1, \chi_2(p, q)/q_2)  \to \psi(p, q),\]

%let $\phi'(q_1, q_2): = q_1\wedge q_2$, $\chi_1(p): = p$, and $\chi_2(p, q): = \Box(p\rightarrow q)$ and $\psi: = \Diamond q$.

\noindent where $q_1$ and $q_2$ occur exactly once in $\phi'(q_1, q_2)$, and $q$ is the head of  the box-formula $\chi_2$. ALBA takes in input the corresponding quantified inequality
\[\forall p\forall q[\phi'(\chi_1(p), \chi_2(p, q))  \leq \psi(p, q)]\]
\noindent and equivalently transforms it into the following quasi-inequality:

\[\forall p\forall q\forall\nomj\forall \nomi\forall \cnomm [(\nomj\leq \chi_1(p)\ \&\ \nomi\leq\chi_2(p, q)\ \&\  \psi(p, q)\leq \cnomm)\Rightarrow \phi'(\nomj, \nomi)\leq \cnomm].\]

\noindent We leave it to the reader to verify as an exercise that this equivalence is sound on complex algebras. The proof makes use of the fact that  every element of a complex algebra (and hence in particular any possible interpretation of $\chi_1(p)$, $\chi_2(p, q)$ and $\psi(p, q)$) is the union of the singletons of its elements and the intersection of the complements of singletons of its non-elements, and the fact that the meaning function associated with $\phi'$ is $1$-additive. In its turn, the quasi-inequality above can be equivalently rewritten as follows:
 \[\forall p\forall q\forall\nomj\forall \nomi\forall \cnomm [(\xi_1(\nomj)\leq p\ \&\ \xi_2(p, \nomi)\leq q\ \&\  \psi(p, q)\leq \cnomm)\Rightarrow \phi'(\nomj, \nomi) \leq \cnomm],\]
\noindent  where $\xi_1$ and $\xi_2$ are formulas in the expanded language, possibly containing the additional connective $\Diamondblack$, which are respectively interpreted as the adjoint and residual in the last coordinate of the meaning functions associated with $\chi_1$ and $\chi_2$ respectively. The formulas  $\xi_1$ and $\xi_2$ can be computed by induction on $\chi_1$ and $\chi_2$. For example, if $\chi_1(p): = \Box^k p$, then $\xi_1(\nomj): = \Diamondblack^k\nomj$, and if $\chi_2(p, q): = \Box(p\rightarrow \Box^kq)$, then $\xi_2(p, \nomj): =\Diamondblack^k( p\wedge\Diamondblack \nomj)$.
 We claim that the displayed quasi-inequality above is equivalent on complex algebras to the following quasi inequality:

 \[\forall q\forall\nomj\forall \nomi\forall \cnomm [\xi_2(\xi_1(\nomj)/p, \nomi)\leq q\ \&\  \psi(\xi_1(\nomj)/p, q)\leq \cnomm)\Rightarrow \phi'(\nomj, \nomi) \leq \cnomm],\]
\noindent  which is in turn equivalent to:
 \[\forall\nomj\forall \nomi\forall \cnomm [\psi(\xi_1(\nomj)/p, \xi_2(\xi_1(\nomj)/p, \nomi)/q)\leq \cnomm)\Rightarrow \phi'(\nomj, \nomi) \leq \cnomm],\]

\noindent from which all propositional variables have been eliminated, and which can hence be translated inductively into a first-order sentence. Both equivalences are instances of  {\em Ackermann's lemma}, and we refer the reader to \cite{CoGhPa13} for proof and an expanded discussion. Finally, notice that the interpretations on complex algebras of the terms $\xi_1(\nomj)$ and $\xi_2(\xi_1(\nomj)/p, \nomi)$ coincide with  the definition of the assignments of $p = p_1$ and $q = p_2$ under the tame valuation $V_3$ defined in the proof of Proposition \ref{linear ind}.
\section{Conclusions}

In this paper, the Sahlqvist-style syntactic identification of classes of modal formulas that are endowed with  local first-order correspondents has been explained in terms of certain order-theoretic properties of the extension maps corresponding to the formulas of these classes. These properties and the resulting methodology apply also beyond the Sahlqvist class, as the example of atomic inductive formulas shows. Further features which we would like to emphasize are:

\paragraph{Generalizing the signatures.} Our treatment is modular: in particular, we neatly divided the correspondence proof for each class of formulas in three stages. Although, for simplicity, we confined our treatment to the basic modal signature, the most important stage---i.e.\ the one referred to as `order-theoretic conditions'---is  intrinsically independent from any algebraic signature. Therefore, it can be applied to any one, and in particular to any modal signature. This  observation  makes it possible to take the order-theoretic behaviour of the algebraic interpretations of formulas  as {\em primary}, and to provide a principled order-theoretic formulation of the Sahlqvist (or inductive) shape for logics algebraically captured by varieties of  lattice expansions (cf.\ \cite{CoGhPa13,CoPa11}), which can be extended also to other logical frameworks such as hybrid logics \cite{ConRob} and mu-calculi \cite{CFPS}.

\paragraph{Modifying the description of tame valuations.} We have showed that at the core of the correspondence mechanism there are special  classes of (tame) valuations, the members of which can be described uniformly in the language $L_0$. Of course the classes $\mathsf{Val}_1(\f)$, $\mathsf{Val}_2(\f)$  and $\mathsf{Val}_3(\f)$, on which we settled as a compromise between simplicity of presentation and generality, are just three instances, and a plethora of further refinements are possible, reaching out to the class of inductive implications, which are characterized by the following shape of box-formulas:
%
%On the one hand, by complicating the definitions of these classes, using more parameters and taking into account the positions where boxed atoms / box formulas occur within the antecedent, the shape of the first-order correspondent obtained can be improved.
%
%On the other hand, the class $\mathsf{Val}_3(\f)$, for example, is designed to be targeted by {\em atomic} inductive implications, and as such is too small for the whole class of inductive implications. The essential difference lies in the definition of the (general, not necessarily atomic) box formulas, which yields shapes such as the following:
\[
\Box(A_{0}\rightarrow \Box(A_{1}\rightarrow \ldots \Box(A_{n}\rightarrow \Box^k p)\ldots)),
\]
where the $A_i$s are arbitrary positive formulas. The correspondence result for the whole class of inductive formulas can be obtained by the same methodology we proposed in Section \ref{Atomic:Ind:Section}, applied to a suitably larger class of  tame valuations.

However, as we have seen in Section \ref{sec:glimpse}, the alternative approach of ALBA makes it possible to directly {\em compute} the tame valuations from the given formula in input, rather than having first to target a specific class of tame valuations. Hence, ALBA is computationally a more convenient tool, while at the same time being based on the same order-theoretic principles as the proofs given in Section \ref{section:algebraic correspondence}, and implementing the same `minimal valuation' proof strategy.

\paragraph{From the Boolean to more general settings.}
The statements and proofs of our theorems about the order-theoretic conditions only use the following two features of powerset algebras: that they are complete distributive lattices, and that they are completely join-generated by their completely join prime elements\footnote{An element $c\neq \bot$ of a complete lattice $L$ is \emph{completely join prime} if, for every $S\subseteq L$,  $c \leq s$ for some $s\in S$ whenever $c
\leq \bigvee S$.} (the singleton subsets). Therefore, these proofs go through virtually unchanged in the more general setting of distributive lattices enjoying these two properties. In fact, further refinements are possible, which show that the distributivity of the lattices is also inessential.
%In fact, it is well known that the modal expansion of each of these special lattices is isomorphic to the complex algebra of some relational structure for modal expansions of the propositional logic of distributive lattices. %that are (with operators) --- we just need to replace thesingletons used with completely join irreducible elements.
This observation motivates the development of the approach to correspondence in which the algebraic perspective is taken as primary (cf.\ e.g.\ \cite{CoGhPa13,CoPa11,CFPS,PaSoZh14a}). This approach turns out to be very fruitful also with respect to the study of several types of relational structures (see e.g.\ the two relational semantics for lattice-based logics discussed in \cite{CoPa11}, and the epistemic interpretation of lattice-based modal logic on RS-frames \cite{CFPPTW}). %the algorithmic correspondence theory for distributive modal logic in \cite{CoPa10} in the purely algebraic setting of lattices of such kind.
%from these The reduction strategy for the inductive formulas can thus be exported to modal and substructural logics basedon such lattices in  order to obtain correspondence results with respect to the dualrelational semantics of these logics. (Recall that an element $c$ of a completely distributive lattice is \emph{completely join irreducible} if $c \in S$ whenever $\leq \bigvee S$. \emph{Completely meet irreducible} elements are defined similarly.A completely distributive lattice is \emph{perfect} if every element is both a joinof completely join irreducibles and also a meet of completely meet irreducibles.)

\paragraph{Duality and correspondence.}  Although it was not strictly needed for our exposition, and hence not so prominent in it, the mathematical background of the algebraic approach to correspondence theory is the duality between Kripke frames and complete and atomic BAOs. For instance, results such as  Proposition \ref{prop:right adjoints}.2 and .3  and Proposition \ref{prop:residuated maps}.3 are essentially  characterizations of objects across a duality.
More generally, the results of  the present paper are grounded on the possibility of translating the correspondence problem from  the setting of Kripke frames to that of their associated complex algebras. However, duality guarantees that the converse direction is possible. Namely,  relational structures can be systematically generated from certain algebras. This direction plays a fundamental role in the correspondence theory of substructural logics and lattice-based modal logics.
%More in general, the relational interpretation of modal logic can be obtained  by dualizing its canonical algebraic interpretation on BAOs. This modus operandi is not confined to modal logic: any duality involving the class of algebras canonically associated with a given propositional logic provides the appropriate setting for correspondence results.

\bibliographystyle{abbrv}
\bibliography{CPSFinal}

\end{document}